\newcommand\reallywidehat[1]{%
\savestack{\tmpbox}{\stretchto{%
  \scaleto{%
    \scalerel*[\widthof{\ensuremath{#1}}]{\kern-.6pt\bigwedge\kern-.6pt}%
    {\rule[-\textheight/2]{1ex}{\textheight}}
  }{\textheight}%
}{0.5ex}}%
\stackon[1pt]{#1}{\tmpbox}%
}
\DeclareMathOperator{\R}{\mathbb{R}}
\DeclareMathOperator{\N}{\mathbb{N}}
\newcommand{\norm}[1]{\left\lVert#1\right\rVert}
\theoremstyle{plain}
\newtheorem{theorem}{Theorem}[section]
\newtheorem{Corollary}[theorem]{Corollary}
\newtheorem{lemma}[theorem]{Lemma}
\newtheorem{proposition}[theorem]{Proposition}
\theoremstyle{remark}
\newtheorem{Remark}[theorem]{Remark}
\newcommand{\Hmm}[1]{\leavevmode{\marginpar{\tiny%
$\hbox to 0mm{\hspace*{-0.5mm}$\leftarrow$\hss}%
\vcenter{\vrule depth 0.1mm height 0.1mm width \the\marginparwidth}%
\hbox to 0mm{\hss$\rightarrow$\hspace*{-0.5mm}}$\\\relax\raggedright #1}}}
\begin{document}

\title[Long-time behavior of SgKdV with additive noise]{Global Well-posedness and Scattering for Stochastic generalized KdV Equations with additive noise}

\author{Engin Ba\c{s}ako\u{g}lu}

\author{Faruk Temur}

\author{O\u{g}uz Y{\i}lmaz}

\address{Institute of Mathematical Sciences, ShanghaiTech University, Shanghai, 201210, China}

\email{ebasakoglu@shanghaitech.edu.cn}

\address{Department of Mathematics,
Izmir Institute of Technology, 
Urla 35430, Izmir, Turkey}

\email{faruktemur@iyte.edu.tr}

\address{Department of Mathematics,
Bo\u gazi\c ci University, 
Bebek 34342, Istanbul, Turkey}

\email{oguz.yilmaz@bogazici.edu.tr}

\subjclass[2010]{}
\keywords{}

\begin{abstract}
    We study the defocusing stochastic generalized Korteweg–de Vries equations (sgKdV) driven by additive noise, with a focus on mass-critical and supercritical nonlinearities. For integers $k\geq 4$, we establish local well-posedness almost surely up to scaling critical regularity. We also prove global well-posedness and scattering in $L^{2}_{x}(\R)$ for the mass-critical equation with small initial data; also in $H^{1}_{x}(\R)$ for the mass supercritical equation. In particular, we prove oscillatory integral estimates associated with more general dispersion relations, which are of independent interest; and we make use of a special case of these estimates as a main ingredient for the necessary bounds on the tail of the stochastic convolution for sgKdV, which is crucial to conclude scattering results.
\end{abstract}
\maketitle

\section{Introduction}
In this paper, we consider the defocusing stochastic gKdV equations with additive stochastic forcing
\begin{equation}\label{eq:stochastic_gKdV}
    \begin{cases}
        &du+u_{xxx}dt=(u^{k+1})_{x}dt+dW,\\
        &u(0,x)=u_{0}(x),
    \end{cases}
    \quad t\geq 0,\, x\in\mathbb{R},
\end{equation}
for even integers $k\geq 4$, and $dW=g(\omega,t)\phi(x)dB(\omega,t)$, where $\phi$ is Schwartz-in-space function, $g$ is a real-valued predictable process with $g\in L^{2}_{t}([0,\infty))$, $\mathbb{P}$-a.s., $g\in L^{\infty}_{\omega,t}(\Omega\times[0,T])$ for $T>0$, and $B(\omega,t)$ is a real-valued Brownian motion on a probability space $(\Omega,\{\mathcal{F}_{t}:\,t\geq0\},\mathbb{P})$ with right continuous filtration.

Such equations as in \eqref{eq:stochastic_gKdV} model the evolution of unidirectional, weakly nonlinear, and weakly dispersive waves \cite{Korteweg_First_article,Segur_Hammack_1982}. In particular, when $k=2$, the equation \eqref{eq:stochastic_gKdV} refers to the modified KdV equation, which arises as a model for large amplitude internal waves in a density stratified medium, and for Fermi–Pasta-Ulam lattices with bistable nonlinearity, see \cite{johnson_KdV_physical}. Unlike the KdV and modified KdV equations, the deterministic analogue of \eqref{eq:stochastic_gKdV} with $k\geq 3$ is not completely integrable, see for instance \cite{Miura_KdV_integrability}. Hovewer, there are still conserved quantities, such as mass 
\begin{equation}\label{mass_functional}
    M(u(t)):=\int_{\mathbb{R}}u^{2}(t,x)dx,
\end{equation}
and the energy
\begin{equation}\label{energy_functional}
    E(u(t)):=\frac{1}{2}\int_{\mathbb{R}}u_{x}^{2}(t,x)dx+\frac{1}{k+2}\int_{\mathbb{R}}u^{k+2}(t,x)dx.
\end{equation}

The well-posedness and scattering results for the deterministic analogue of \eqref{eq:stochastic_gKdV}, especially for $k\geq 3$, were first established by the pioneering work of Kenig-Ponce-Vega \cite{Kenig_gKdV_1993}. They proved local and global well-posedness in $H^{s}_{x}(\mathbb{R})$ for $s\geq s_{k}$ with $k\geq 4$, where the scaling critical exponent $s_k$ associated with \eqref{eq:stochastic_gKdV} is given by \begin{align}\label{scalingcr}
  s_{k}=\frac{k-4}{2k} . 
\end{align}
In particular, when $k=4$, solutions for the deterministic mass-critical gKdV equation \eqref{eq:deterministic_gKdV_article} satisfy the following global existence result: 
\begin{theorem}[Theorem 2.8 in \cite{Kenig_gKdV_1993}]\label{thm:scat_det_mass_crit_gKdV}
    For any initial data $u_{0}\in L^{2}_{x}(\mathbb{R})$ with $\norm{u_{0}}_{L^{2}_{x}}<\delta$ for some $\delta>0$, the solution $y$ to \eqref{eq:deterministic_gKdV_article} with k=4 exists globally and satisfies
\begin{equation*}
\begin{aligned}
    &y\in C^{1}_{x}L^{2}_{t}(\mathbb{R}\times\mathbb{R})\cap C^{0}_{t}L^{2}_{x}(\mathbb{R}\times\mathbb{R}),\\& \norm{y}_{L^{5}_{x}L^{10}_{t}(\mathbb{R}\times\mathbb{R})}+\norm{\partial_{x}y}_{L^{\infty}_{x}L^{2}_{t}(\mathbb{R}\times\mathbb{R})}<\infty.
    \end{aligned}
\end{equation*}
\end{theorem}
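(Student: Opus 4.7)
The plan is to set up a global Picard iteration for the Duhamel formula
\begin{equation*}
y(t) = U(t)u_0 + \int_0^t U(t-s)\,\partial_x(y^5)(s)\,ds, \qquad U(t) := e^{-t\partial_x^3},
\end{equation*}
in a small ball of the scaling-critical Strichartz space $L^5_x L^{10}_t(\R\times\R)$; the smallness hypothesis $\norm{u_0}_{L^2}<\delta$ will be used to ensure the iteration closes globally in time without any subcritical margin.

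First I would assemble the standard Kenig-Ponce-Vega linear estimates for the Airy group $U(t)$, all \emph{global in time}: the Strichartz estimate $\norm{U(t)u_0}_{L^5_x L^{10}_t}\lesssim \norm{u_0}_{L^2}$, the sharp Kato smoothing $\norm{\partial_x U(t)u_0}_{L^\infty_x L^2_t}\lesssim \norm{u_0}_{L^2}$, and mass conservation $\norm{U(t)u_0}_{L^\infty_t L^2_x}=\norm{u_0}_{L^2}$. Via $TT^*$ duality together with the Christ-Kiselev lemma, these upgrade to retarded inhomogeneous estimates of the form
\begin{equation*}
\norm{\int_0^t U(t{-}s)\,\partial_x F(s)\,ds}_{L^5_x L^{10}_t \cap L^\infty_t L^2_x} + \norm{\partial_x^2\!\int_0^t U(t{-}s)F(s)\,ds}_{L^\infty_x L^2_t} \lesssim \norm{F}_{L^1_x L^2_t},
\end{equation*}
which are the only inhomogeneous bounds I need. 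The pivotal nonlinear observation at the mass-critical scaling is then the perfect mixed-norm H\"older identity
\begin{equation*}
\norm{y^5}_{L^1_x L^2_t(\R\times\R)} \leq \norm{y}_{L^5_x L^{10}_t(\R\times\R)}^5,
\end{equation*}
valid because $5\cdot\tfrac15=1$ in $x$ and $5\cdot\tfrac1{10}=\tfrac12$ in $t$; the telescoping identity for $y_1^5-y_2^5$ gives the analogous Lipschitz bound with prefactor $\norm{y_1}^4+\norm{y_2}^4$.

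Combining these ingredients, the map $\Phi(y):=U(t)u_0 + \int_0^t U(t{-}s)\partial_x(y^5)\,ds$ satisfies $\norm{\Phi(y)}_{L^5_x L^{10}_t} \leq c\norm{u_0}_{L^2} + c\norm{y}_{L^5_x L^{10}_t}^5$ on the ball $X_M=\{y:\norm{y}_{L^5_x L^{10}_t}\leq M\}$, with matching Lipschitz constant of order $M^4$. Choosing $M = 2c\delta$ and $\delta$ small enough that $cM^4<1/2$, Banach's fixed-point theorem produces a unique global solution $y\in X_M$. Plugging the resulting global bound $\norm{y^5}_{L^1_x L^2_t}\lesssim M^5$ back into the inhomogeneous estimates of the previous step yields $\partial_x y\in L^\infty_x L^2_t$ and $y\in C^0_t L^2_x$, while the required $C^1_x L^2_t$ continuity follows by approximation with Schwartz initial data. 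Finally, the identity
\begin{equation*}
U(-t_2)y(t_2) - U(-t_1)y(t_1) = \int_{t_1}^{t_2} U(-s)\,\partial_x(y^5)(s)\,ds
\end{equation*}
combined with the dual Kato smoothing shows that $U(-t)y(t)$ is Cauchy in $L^2_x$ as $t\to\pm\infty$, because $\norm{y^5}_{L^1_x L^2([t_1,t_2])}\to 0$; this delivers the scattering states.

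The principal obstacle is that the whole argument must close at the scaling-critical $L^2$ level with no regularity loss and no time-localization gain, which is precisely why a smallness hypothesis on $\norm{u_0}_{L^2}$ is unavoidable. The technically delicate linear ingredient is the global-in-time ``double smoothing'' estimate $\norm{\partial_x^2 \int_0^t U(t{-}s) F\,ds}_{L^\infty_x L^2_t}\lesssim \norm{F}_{L^1_x L^2_t}$: the unretarded version follows directly from the sharp Kato smoothing by $TT^*$, but passing to the retarded integral with the same $L^2_t$ exponent on both sides requires a careful application of Christ-Kiselev.
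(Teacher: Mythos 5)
Your proposal reproduces, in all essentials, the Kenig--Ponce--Vega argument that the paper cites: a small-data contraction in the scaling-invariant space built from $L^5_xL^{10}_t$, $L^\infty_xL^2_t$ (for the derivative), and $L^\infty_tL^2_x$, closed by the mixed-norm H\"older identity $\norm{y^5}_{L^1_xL^2_t}\leq\norm{y}_{L^5_xL^{10}_t}^5$. The linear inputs (Strichartz, sharp Kato smoothing, mass conservation) and the final Cauchy-in-$L^2$ argument for the scattering state are all as in \cite{Kenig_gKdV_1993}. So the route is the same.

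There is, however, one genuine flaw in the step you yourself flag as delicate. You claim that the retarded ``double smoothing'' estimate
\begin{equation*}
\Big\|\partial_x^2\int_0^t U(t-s)F(s)\,ds\Big\|_{L^\infty_xL^2_t}\lesssim\norm{F}_{L^1_xL^2_t}
\end{equation*}
follows from the unretarded $TT^*$ estimate via the Christ--Kiselev lemma. It does not: Christ--Kiselev requires a \emph{strict} gap between the temporal exponents on the input and output sides (the lemma fails, with explicit counterexamples, in the endpoint case of equal exponents), and here the time exponent is $2$ on both sides. This is exactly the borderline where the maximal-function-type argument underlying Christ--Kiselev breaks down. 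For the two other retarded estimates you need --- into $L^5_xL^{10}_t$ (input $L^2_t$, output $L^{10}_t$) and into $L^\infty_tL^2_x$ (input $L^2_t$, output $L^\infty_t$) --- Christ--Kiselev applies cleanly, so those steps are fine.

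The correct way to close this gap is the direct kernel computation of \cite[Lemma 3.7]{Kenig_gKdV_1993}: one Fourier transforms in both $x$ and $t$, exploits the explicit Airy dispersion relation to convert the time integral into a one-dimensional Fourier multiplier whose symbol has the right homogeneity, and then applies Plancherel in $t$ pointwise in $x$. This yields the retarded estimate with $L^2_t$ on both sides without any Christ--Kiselev lemma (which, incidentally, postdates \cite{Kenig_gKdV_1993} anyway). With that substitution made, the rest of your argument is correct and coincides with the original.
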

Later, in \cite{Dodson_gKdV}, Dodson removed the assumption of small initial data from the above result, and using a concentration compactness argument, he proved the existence of a unique strong global-in-time solution to \eqref{eq:deterministic_gKdV_article} with $k=4$ as well as scattering (see \cite[Theorems 1.2, 1.3]{Dodson_gKdV}). In the context of the stochastic equation \eqref{eq:stochastic_gKdV}; however, it is unclear whether an adaptation of a concentration compactness argument works, as the method relies heavily on the conserved quantities of the equation; see, for example, \cite[Lemma 3.1]{Dodson_gKdV}. When $k>4$ is an even integer, the energy scattering for the defocusing problem was established for any $H^{1}_{x}(\R)$ data in \cite{Farah_H1_scattering}. The global well-posedness below the energy level (in the case $k\geq 4$) was studied in \cite{Farah_critical_gKdV,Farah_supercrit_gKdV}. In particular, when $k=4$, it was shown in \cite{Farah_critical_gKdV} that the focusing problem (\eqref{eq:deterministic_gKdV_article} with $-(y^{k+1})_x dt$ on the right side instead of $(y^{k+1})_x dt$) is globally well-posed in Sobolev spaces $H^s$ with $s>\frac{3}{5}$ under a suitable smallness assumption on the initial data, where they utilized an argument of \cite{colliander2002almost}. Also, when $k>4$, the authors \cite{Farah_supercrit_gKdV} proved the global existence of solutions
\begin{itemize}
    \item[(i)] under smallness assumption on the $H^{1}_{x}$ initial data (for the focusing case), 
    \item[(ii)] in $H^{s}_{x}(\mathbb{R})$ with $s>\frac{4(k-1)}{5k}$ for even $k>4$ (for the defocusing case).
\end{itemize}

As for the stochastic equation \eqref{eq:stochastic_gKdV} with additive noise, there are only a few related works initiated by Bouard and Debussche in a series of papers \cite{deBouard_SKdV_1,deBouard_SKdV_3,deBouard_SKdV_2} concerning the case $k=1$. Moreover, the cases $k=2$ and $k=3$ with additive stochastic forcing term are considered in \cite{Millet_SKdV}, in which the almost sure global well-posedness result in the energy space $H^{1}_{x}$ was obtained. Also in the context of global existence and scattering for stochastic nonlinear Schr\"odinger equations, we refer to the papers \cite{Basakoglu_Scattering_NLS, cheung2020conservation, Fan_Zhao, Herr_2019, Oh_Critical} and references therein.

Our first result for equation \eqref{eq:stochastic_gKdV} is as follows:
\begin{theorem}\label{thm:well-posedness}
Let $0<T<\infty$, and assume that $g\in L^{\infty}_{\omega,t}(\Omega\times[0,T])$. Then, we have the following:
    \begin{enumerate}
  \item[$(i)$] Let $k\geq 4$ be an integer and $s\geq s_{k}=\frac{k-4}{2k}$. Then, the initial value problem \eqref{eq:stochastic_gKdV} with the data $u_0\in H^{s}_{x}(\mathbb{R})$ is $\omega$ almost surely locally well-posed.  
       \item[$(ii)$] If $k=4$, the initial value problem \eqref{eq:stochastic_gKdV} is $\omega$ almost surely globally well-posed in $L^{2}_{x}(\mathbb{R})$ for small initial data; if $k>4$ is an even integer, global well-posedness holds in $H^{1}_{x}(\mathbb{R})$ for large data $\omega$ almost surely.
    \end{enumerate}
\end{theorem}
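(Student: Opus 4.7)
I would use the Da Prato--Debussche decomposition $u = v + z$, where
\begin{equation*}
z(t) = \int_{0}^{t} S(t-s)\, g(s)\, \phi \, dB(s)
\end{equation*}
is the stochastic convolution with $S(t) = e^{-t\partial_x^3}$ the Airy group, so that $v = u - z$ satisfies the pathwise PDE $v_t + v_{xxx} = ((v+z)^{k+1})_x$ with $v(0) = u_0$. The first task is to prove almost-sure bounds for $z$ in the Kenig--Ponce--Vega resolution spaces: combining Burkholder--Davis--Gundy with the linear Strichartz and Kato smoothing estimates for $S(t)$ and the Schwartz assumption on $\phi$, I expect, for $s \geq s_k$ and $p \geq 2$,
\begin{equation*}
\E\Bigl(\norm{z}_{C_{t}H^{s}_{x}([0,T])}^{p} + \norm{\partial_x^{s+1} z}_{L^{\infty}_{x} L^{2}_{t}([0,T])}^{p} + \norm{z}_{L^{5}_{x} L^{10}_{t}([0,T])}^{p}\Bigr) \lesssim_{\phi, p} \norm{g}_{L^{\infty}_{\omega,t}(\Omega\times[0,T])}^{p}\, T^{p/2},
\end{equation*}
which yields $\omega$-a.s.\ finiteness; using $g \in L^{2}_{t}([0,\infty))$ a.s., the $L^{5}_{x} L^{10}_{t}$ bound extends to $\R \times [0,\infty)$.

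\noindent\textbf{Local theory (part (i)).} Treating $z$ as a Schwartz-regular random forcing with the a.s.\ finite resolution norms from above, I would run the Kenig--Ponce--Vega contraction mapping argument for the $v$-equation in the critical space associated with $s_k$. Expanding $(v+z)^{k+1}$ by the binomial theorem and applying the multilinear estimates from \cite{Kenig_gKdV_1993} to each term, the only source of $\omega$-dependence enters through the a.s.\ finite norms of $z$. Standard smallness arguments --- in $T$ when $s > s_k$, or in the high-frequency tail of $u_0$ when $s = s_k$ --- then yield a unique fixed point on a random interval $[0, T(\omega)]$.

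\noindent\textbf{Global theory (part (ii)).} For $k=4$, I would combine the local theory with Theorem~\ref{thm:scat_det_mass_crit_gKdV}: since $\norm{z}_{L^{5}_{x} L^{10}_{t}(\R \times [0,\infty))} < \infty$ a.s., on the event where this norm is sufficiently small the $v$-equation becomes a small perturbation of the deterministic mass-critical gKdV with small $L^{2}$ data, and a continuity/bootstrap argument in $\norm{v}_{L^{5}_{x} L^{10}_{t}}$ extends the solution to $[0,\infty)$; the required smallness of $\norm{u_0}_{L^{2}}$ is chosen relative to the random size of $z$. For $k > 4$ even, I work directly with $u$ in $H^{1}$: Itô's formula applied to $E(u)$ gives
\begin{equation*}
E(u(t)) = E(u_0) + \int_0^t g(s)\bigl(\langle u_x, \phi_x\rangle + \langle u^{k+1}, \phi\rangle\bigr) dB(s) + \tfrac{1}{2}\int_0^t g^2(s)\bigl(\norm{\phi_x}_{L^{2}}^{2} + (k+1)\langle u^k, \phi^2\rangle\bigr) ds,
\end{equation*}
and an analogous identity for $M(u)$. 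Burkholder--Davis--Gundy on the martingale terms, Gagliardo--Nirenberg to absorb $\langle u^k, \phi^2\rangle$ into $M + E$, and Gronwall's inequality then produce an a priori bound on $\E\sup_{s \leq T}[M + E](u(s))$; almost-sure boundedness of $\norm{u(t)}_{H^{1}}$ follows and iteration of the $H^{1}$ local theory from part (i) gives almost-sure global existence.

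\noindent\textbf{Main obstacle.} The principal technical difficulty lies in the first step at the scaling-critical regularity $s = s_k$: the Kato-type bound $\norm{\partial_x^{s_k+1} z}_{L^{\infty}_{x} L^{2}_{t}}$ for the Itô integrand does not follow transparently from the deterministic Kato estimate once composed with a stochastic integral, and genuinely requires the oscillatory integral bounds for more general dispersion relations announced in the abstract. This same ingredient is also what will provide the tail control $\norm{z}_{L^{5}_{x} L^{10}_{t}([T,\infty))} \to 0$ as $T \to \infty$ needed for the scattering statements that follow in the paper.
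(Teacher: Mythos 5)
Your skeleton — a priori bounds on the stochastic convolution via Burkholder--Davis--Gundy, a contraction in Kenig--Ponce--Vega spaces, Itô/Gronwall for globalization — tracks the paper's proof, but several of your specific claims diverge from what the paper actually does, and one is a genuine gap.

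Your ``main obstacle'' is misattributed. The paper does \emph{not} use the new oscillatory-integral estimates of Section~\ref{section:est_on_tail} to prove the Kato-smoothing bound $\beta_{2,T}(z)=\E\|D_x^{1+s_k} z\|^2_{L^\infty_x L^2_t([0,T])}<\infty$; it obtains that bound by the interpolation device of Millet--Roudenko and de Bouard--Debussche: one estimates $\|D_x^\alpha z\|_{L^\infty_x L^p_\omega L^2_t}$ and $\|z\|_{L^2_x L^p_\omega L^2_t}$ separately using BDG, the Schwartz decay of $\phi$, and Sobolev embedding in $t$, interpolates in $x$, and takes $p\to\infty$. The oscillatory-integral propositions appear only when bounding the \emph{tail} $z_*$ on $[T,\infty)$ for the scattering results, because the dispersive inequality alone cannot reach all Kato triples with $p<q$. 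Similarly, the removal of the smallness condition in the critical contraction is not the familiar deterministic ``high-frequency tail'' device: the paper approximates $u_0$ by $\widetilde u_0$ and $\phi$ by $\widetilde\phi$ simultaneously, sets $T_n\sim 2^{-n}$, and runs a Borel--Cantelli argument on $\{\max_j\alpha_{j,T_n}(\widetilde z)\geq\varepsilon\}$ to obtain a.s.\ smallness of the relevant norms on a random but a.s.\ positive interval; the existence time then depends on the \emph{position} of $u_0$ and on the sample path.

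In part~(ii), the $k=4$ route you propose has a gap: $\|z\|_{L^5_xL^{10}_t(\R\times[0,\infty))}$ is a.s.\ \emph{finite}, not a.s.\ \emph{small}, so ``on the event where this norm is sufficiently small'' does not exhaust $\Omega$ up to a null set, and the perturbation-from-deterministic argument does not yield an a.s.\ statement. The paper treats $k=4$ and $k>4$ by the same mechanism: Itô's lemma is applied not to $E$ or $M+E$ but to $(C(\phi)+M(u)+E(u))^m$ --- the added constant $C(\phi)$ and the power $m$ are what make $F_1$ and $F_2$ dominated by $(C(\phi)+M+E)$ itself, giving a linear-in-the-unknown drift --- and the local-martingale term is handled directly by Zhang's stochastic Gronwall lemma (Lemma~\ref{gronwall}), avoiding BDG altogether. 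Your BDG-plus-classical-Gronwall plan for $k>4$ can be made to close via Young's inequality on the square-root term, but it does not deliver the $k=4$ case, and it is less direct than the route taken in the paper.
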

Here we briefly sketch the idea of proof of Theorem \ref{thm:well-posedness}; for details, see Section \ref{section:well-posedness}. First, we note that the solutions to \eqref{eq:stochastic_gKdV} satisfy the following integral formulation:
\begin{equation}\label{Duhamel_of_stochastic_solution}
    u(t)=V(t)u_{0}+\int_{0}^{t}V(t-t')(u^{k+1})_{x}(t')dt'+z(t),
\end{equation}
where
\begin{equation}\label{airy_propagator}
    V(t)=\int_{\R}e^{i(x\xi+t\xi^{3})}d\xi
\end{equation}
is the Airy propagator, and we denote the stochastic convolution (by suppressing the $\omega$ dependence)
\begin{align}\label{stconv}
 z(t) = \int_{0}^{t}V(t-t')dW(t')=\int_{0}^{t}V(t-t')\phi(x)g(t)dB(t').   
\end{align}
Concerning the proof of $(i)$ in Theorem \ref{thm:well-posedness}, the main issue is to derive a priori bounds on the stochastic convolution (which is discussed in Section \ref{A priori estimates}) via the tools from Section \ref{section: preliminaries} and some interpolation arguments discussed in \cite{Millet_SKdV}. This will later yield a control on the key norms needed for the contraction argument in suitable resolution spaces, see \eqref{Y_space} and \eqref{Z_space} for the definitions of such spaces. As for the proof of $(ii)$ in Theorem \ref{thm:well-posedness}, we exploit the sign definiteness of the deterministic energy \eqref{energy_functional} (for $k$ even) to control the growth of mass and energy, despite the lack of exact conservation in the stochastic setting. In Section \ref{glexst}, applying It\^{o}'s lemma to a suitable power of a combination of mass and energy and using well-known estimates, we bound drift and martingale terms uniformly in time. A stochastic Gronwall-type lemma (Lemma \ref{gronwall}) then yields moment bounds that, combined with the local theory, extend solutions globally.

\subsection{The tail of the stochastic convolution and the scattering results}\label{tss}
In our discussion, the scattering property of solutions to equations \eqref{eq:stochastic_gKdV} depends very much on the tail of the stochastic convolution which is defined by
\begin{equation}\label{tail_of_stochastic_convolution}
		z_*(t):=\int_{t}^{\infty}V(t-t')dW(t').
\end{equation}
For a discussion on the stochastic convolution $z(t)$ (see \eqref{stconv}) and the tail of the stochastic convolution $z_{*}(t)$, we refer the readers to \cite{Basakoglu_Scattering_NLS}. In \cite{Basakoglu_Scattering_NLS} we proved $\omega$ a.s. explicit time decay results for $L^p(\R^n)$ norms of $z_*(t)$. Here, in addition,
we prove estimates on $\|D^{\alpha}z_*\|_{L^p_xL^q_t([T,\infty))}$ where $(\alpha, p, q)$ are Kato triples.

Before recalling the Da Prato-Debussche trick for solutions of \eqref{eq:stochastic_gKdV} in what follows, we state the deterministic gKdV equation (to be used in the sequel)
\begin{equation}\label{eq:deterministic_gKdV_article}
    dy +  y_{xxx} dt = (y^{k+1})_{x} dt,
\end{equation}
where $k\geq 4$ is an even integer. Let $T\gg 1$ be a large time parameter and decompose the solution \eqref{Duhamel_of_stochastic_solution} as $u(t)=u_{*}(t)+z_{*}(t)$ (known as the Da Prato-Debussche trick), for $t\geq T$. Then $u_{*}(t)$ satisfies the following random differential equation:
\begin{equation}\label{random_dif_eqn_u_*}
    du_{*}+(u_{*})_{xxx}dt=((u_{*}+z_{*})^{k+1})_{x}dt,\quad (t,x)\in[T,\infty)\times\R.
\end{equation}
Set $u_{*}(T)=y(T)$. Note that for each $\omega\in\Omega$, we pick a different $y(T)$, and turn \eqref{eq:deterministic_gKdV_article} into a random differential equation. Next, we consider $v:=u_{*}-y$ (so $v(T)=0$), and in view of \eqref{eq:deterministic_gKdV_article} and \eqref{random_dif_eqn_u_*}, observe that $v$ satisfies the following random IVP:
\begin{equation}\label{eq:differ_rand_and_det_soln_IVP}
    \begin{cases}
        &dv + v_{xxx}dt = (( v + y + z_{*})^{k+1})_{x}dt - (y^{k+1})_{x}dt\\
        & v(T)=0,
    \end{cases}
    \quad (t,x)\in[T,\infty)\times\mathbb{R}.
\end{equation}
We remark that $u(t)$, $u_{*}(t)$ and $y(t)$ will have the same large time behavior once we establish that the tail of the stochastic convolution $z_{*}(t)$ belongs to the global-in-time Strichartz and Kato type spaces (see Lemmas \ref{lemma:strichartz} and \ref{kato_estimates} for the definition of these spaces), and this helps in proving that the suitable Strichartz and Kato type norms of $v(t)$ on $[T,\infty)$ tend to $0$ as $T\to\infty$, see Section \ref{section:scat_in_L2} for details. This is guaranteed by the assumption of $\omega$ almost sure time-decay of $g(t)$ dictated in Theorems \ref{thm:scat_in_L2} and \ref{thm:scat_in_H1}, and the estimates concerning $z_{*}(t)$ will be discussed in Section \ref{section:est_on_tail}. 

We next state our scattering results. The first one concerns the almost sure scattering for the mass-critical equation, \eqref{eq:stochastic_gKdV} with $k=4$. It is known that the $L^{5}_{x}L^{10}_{t}$ norm is of great importance (which is called the scattering size of solutions), and the key step in the mass critical case in particular is to establish a global-in-time bound for the $L^{5}_{x}L^{10}_{t}$ norm of the solutions to equation \eqref{eq:stochastic_gKdV}. 
To deduce our scattering result in $L^2_x$, we will take advantage of the global well-posedness of the deterministic mass critical gKdV equation \eqref{eq:deterministic_gKdV_article} in $L_{x}^{2}$ with $\norm{y}_{L^{5}_{x}L^{10}_{t}(\R\times\R)}<\infty$ (Theorem \ref{thm:scat_det_mass_crit_gKdV}) together with almost sure global well-posedness of the stochastic mass critical gKdV equation \eqref{eq:stochastic_gKdV} in $L^{2}_{x}$ with a small initial data assumption (Theorem \ref{thm:well-posedness}), and utilize the estimates on the tail of the stochastic convolution $z_{*}(t)$ (Proposition \ref{mainprop}).
\begin{theorem}\label{thm:scat_in_L2}
    Let $k=4$ in \eqref{eq:stochastic_gKdV}. Assume that $\vert g(t)\vert=o(t^{-\gamma})$ $\omega$-a.s. for $\gamma>\frac{2}{3}$ and $\norm{u_{0}}_{L^{2}_{x}}<\delta$ for some $\delta>0$. Then the global $L^{2}_{x}$ solution $u(t)$ of \eqref{eq:stochastic_gKdV}, with initial data $u_{0}$, scatters $\omega$ almost surely forward in time, i.e., $\omega$ almost surely there is $y_{+}\in L^{2}_{x}(\mathbb{R})$ such that
    \begin{equation*}
        \lim_{t\to\infty}\norm{V(-t)u(t)-y_{+}}_{L^{2}_{x}}=0.
    \end{equation*}
   \end{theorem}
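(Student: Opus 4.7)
The overall plan is to apply the Da Prato--Debussche decomposition $u=u_{*}+z_{*}$ on $[T,\infty)$ for a large parameter $T$, bootstrap a global spacetime bound $u\in L^{5}_{x}L^{10}_{t}(\mathbb{R}\times\mathbb{R})$ by comparing $u_{*}$ to a deterministic gKdV evolution $y$ with small data at time $T$, and then deduce scattering in $L^{2}_{x}$ via a Cauchy argument on $V(-t)u(t)$.

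Concretely, I would fix $T\gg 1$ and let $y$ solve \eqref{eq:deterministic_gKdV_article} with data $y(T)=u_{*}(T)=u(T)-z_{*}(T)$ prescribed at time $T$. Combining an almost sure bound on $\|u(T)\|_{L^{2}_{x}}$ (from Theorem \ref{thm:well-posedness}$(ii)$ and the mass-growth control obtained via It\^{o}'s lemma, in which the stochastic corrections are tamed by $\int_{0}^{\infty}g^{2}\,dt<\infty$) with the decay $\|z_{*}(T)\|_{L^{2}_{x}}\to 0$ provided by the assumption $|g(t)|=o(t^{-\gamma})$ and Proposition \ref{mainprop}, one can choose $\delta$ so that $\|y(T)\|_{L^{2}_{x}}$ is below the smallness threshold of Theorem \ref{thm:scat_det_mass_crit_gKdV}. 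This yields a global $y$ with $\|y\|_{L^{5}_{x}L^{10}_{t}(\mathbb{R}\times\mathbb{R})}+\|\partial_{x}y\|_{L^{\infty}_{x}L^{2}_{t}(\mathbb{R}\times\mathbb{R})}<\infty$. The difference $v=u_{*}-y$ satisfies
\begin{equation*}
v(t)=\int_{T}^{t}V(t-t')\partial_{x}\bigl[(v+y+z_{*})^{5}-y^{5}\bigr]\,dt',\qquad v(T)=0.
\end{equation*}
Applying the linear Strichartz and Kato smoothing/maximal function estimates (Lemmas \ref{lemma:strichartz}, \ref{kato_estimates}) together with the Kenig--Ponce--Vega multilinear estimates to the expansion of $(v+y+z_{*})^{5}-y^{5}$, in which every surviving monomial carries a factor of $v$ or $z_{*}$, I would obtain a closed inequality of the schematic form
\begin{equation*}
\mathcal{N}\bigl(v;[T,\infty)\bigr)\lesssim \bigl(\mathcal{N}(v)+\mathcal{N}(z_{*})\bigr)\bigl(\mathcal{N}(v)+\mathcal{N}(y)+\mathcal{N}(z_{*})\bigr)^{4},
\end{equation*}
where $\mathcal{N}$ denotes the appropriate combination of the $L^{5}_{x}L^{10}_{t}$ scattering norm and Kato-type norms on $[T,\infty)\times\mathbb{R}$. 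Since $\mathcal{N}(y)$ is globally finite and $\mathcal{N}(z_{*};[T,\infty))\to 0$ as $T\to\infty$ by Proposition \ref{mainprop}, a continuity/bootstrap argument closes and produces $\mathcal{N}(v;[T,\infty))\to 0$; consequently $u=v+y+z_{*}$ lies in $L^{5}_{x}L^{10}_{t}(\mathbb{R}\times\mathbb{R})$ almost surely.

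To conclude scattering, for $T_{2}>T_{1}$ I would write
\begin{equation*}
V(-T_{2})u(T_{2})-V(-T_{1})u(T_{1})=\int_{T_{1}}^{T_{2}}V(-t')\partial_{x}(u^{5})(t')\,dt'+\int_{T_{1}}^{T_{2}}V(-t')\phi(x)g(t')\,dB(t').
\end{equation*}
The first summand is bounded in $L^{2}_{x}$ through dual Strichartz/Kato estimates by the $L^{5}_{x}L^{10}_{t}([T_{1},T_{2}]\times\mathbb{R})$ norm of $u$, which vanishes as $T_{1}\to\infty$ by the previous step; the second summand has $L^{2}_{x}$ norm controlled via the It\^{o} isometry by $\|\phi\|_{L^{2}_{x}}\bigl(\int_{T_{1}}^{T_{2}}g(t')^{2}\,dt'\bigr)^{1/2}$, which tends to $0$ since $\gamma>\tfrac{2}{3}>\tfrac{1}{2}$. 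Hence $\{V(-t)u(t)\}_{t\geq 0}$ is $\omega$-a.s. Cauchy in $L^{2}_{x}(\mathbb{R})$, and its limit is the desired $y_{+}$. The principal obstacle is the closure of the nonlinear estimate on $v$: unlike in the deterministic setting, the tail $z_{*}$ acts as a non-compactly supported forcing on $[T,\infty)$, and it must be controlled in exactly the combination of Strichartz and Kato-type norms demanded by the derivative nonlinearity $\partial_{x}(u^{5})$; this is precisely the role of Proposition \ref{mainprop}, and the decay threshold $\gamma>2/3$ is what makes these tail norms integrable at infinity.
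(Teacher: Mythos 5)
Your decomposition $u=u_{*}+z_{*}$, the comparison of $u_{*}$ with a deterministic gKdV flow $y$ launched from $y(T)=u_{*}(T)$, the closure of a polynomial inequality for the $L^{5}_{x}L^{10}_{t}([T,\infty))$ norm of $v=u_{*}-y$ via the smallness of $\mathcal{N}(z_{*};[T,\infty))$, and the final Cauchy argument in $L^{2}_{x}$ are all in the spirit of the paper's Section 5.1 (the paper closes the estimate with Lemma \ref{lemma:uniform_bounded_by_its_higher_powers}, which plays the role of your bootstrap). Your direct Cauchy argument on $V(-t)u(t)$, splitting off the nonlinear Duhamel part and the stochastic increment and using the It\^o isometry for the latter, is a clean and acceptable variant of the paper's argument, which instead shows $\sup_{t\geq T}\|v(t)\|_{L^{2}_{x}}\to 0$ and inherits scattering from $y$.

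However, there is a genuine gap in how you set up $y$. You invoke Theorem \ref{thm:scat_det_mass_crit_gKdV} (the Kenig--Ponce--Vega small-data result) and claim that ``one can choose $\delta$ so that $\|y(T)\|_{L^{2}_{x}}$ is below the smallness threshold.'' This does not follow. Since $y(T)=u(T)-z_{*}(T)$, you would need $\|u(T)\|_{L^{2}_{x}}$ to be small almost surely, but the mass of the stochastic solution is not conserved: It\^o's formula gives
\begin{equation*}
M(u(t))=M(u_{0})+2\int_{0}^{t}\int_{\mathbb{R}}u\,\phi\,g\,dx\,dB(t')+\|\phi\|_{L^{2}_{x}}^{2}\int_{0}^{t}g^{2}(t')\,dt',
\end{equation*}
and the drift contribution $\|\phi\|_{L^{2}_{x}}^{2}\|g\|_{L^{2}_{t}([0,\infty))}^{2}$ together with the martingale fluctuation are independent of $\delta$; the moment bound \eqref{momenmass} tames these in expectation but gives no almost sure smallness of $M(u(T))$. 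Thus the Kenig--Ponce--Vega small-data theorem cannot be applied to $y(T)$, and your bootstrap never gets off the ground. The paper avoids this entirely by invoking Dodson's unconditional global well-posedness and scattering for the deterministic mass-critical gKdV (\cite[Theorems 1.1 and 1.2]{Dodson_gKdV}), which furnishes $\|y\|_{L^{5}_{x}L^{10}_{t}(\mathbb{R}\times\mathbb{R})}<\infty$ and $L^{2}_{x}$ scattering for $y$ regardless of the size of $y(T)$; the smallness assumption $\|u_{0}\|_{L^{2}_{x}}<\delta$ is used only to guarantee global existence of the stochastic solution $u$ via Theorem \ref{thm:well-posedness}$(ii)$, not to make $y(T)$ small. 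Replacing your appeal to Theorem \ref{thm:scat_det_mass_crit_gKdV} by Dodson's result repairs the argument.
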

For the mass supercritical case, the next theorem provides the almost sure scattering in the energy space. In this case, the scattering size of the solutions is measured by the norm $L^{\frac{5k}{4}}_{x}L^{\frac{5k}{2}}_{t}$. Hence, the crucial part in establishing the scattering result is to bound the solutions to equation $\eqref{eq:stochastic_gKdV}$ with $k>4$ under the norm $L^{\frac{5k}{4}}_{x}L^{\frac{5k}{2}}_{t}$. This in particular will be performed by showing that the difference $v(t)$ (a solution to \eqref{eq:differ_rand_and_det_soln_IVP}) of random and deterministic solutions becomes arbitrarily small as time gets larger. More precisely, the task will be to deduce that
 \begin{equation*}
     \norm{\langle\partial_{x}\rangle v}_{L^{5}_{x}L^{10}_{t}([T,\infty))}+\norm{v}_{L^{\frac{5k}{4}}_{x}L^{\frac{5k}{2}}_{t}([T,\infty))}+\norm{\partial_{x}^{2}v}_{C^{0}_{x}L^{2}_{t}([T,\infty))}+\norm{v}_{C^{0}_{t}H^{1}_{x}([T,\infty))}\to 0
 \end{equation*}
as $T\to\infty$, which implies the pathwise equivalence of the asymptotic behavior of stochastic solutions and deterministic solutions. As in the mass critical case, Proposition \ref{mainprop} will be of utmost importance in the nonlinear estimates to justify the global in time boundedness of Kato admissible norms of the tail of the stochastic convolution $z_{*}(t)$.
\begin{theorem}\label{thm:scat_in_H1}
    Let $k>4$ be an even integer in \eqref{eq:stochastic_gKdV}, $u_{0}\in H^{1}_{x}(\R)$, and assume that $g(t)$ satisfies the same decay condition as in Theorem \ref{thm:scat_in_L2}. Then the global $H^{1}_{x}$ solution $u(t)$ of \eqref{eq:stochastic_gKdV}, with initial data $u_{0}$, scatters $\omega$ almost surely forward in time, i.e., $\omega$ almost surely there is $y_{+}\in H^{1}_{x}(\mathbb{R})$ such that
    \begin{equation*}
        \lim_{t\to\infty}\norm{V(-t)u(t)-y_{+}}_{H^{1}_{x}}=0.
    \end{equation*}
   \end{theorem}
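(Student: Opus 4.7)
The plan is to follow the Da Prato--Debussche setup from Section \ref{tss}. Fix a large parameter $T$, write $u = u_* + z_*$ on $[T,\infty)$, take $y$ to be the deterministic solution of \eqref{eq:deterministic_gKdV_article} with data $y(T)=u_*(T)\in H^1_x$, and reduce the statement to showing $v = u_*-y\to 0$ as $T\to\infty$ in a resolution space containing the $C^0_t H^1_x$ norm. Once this is in hand, the decomposition
\[ V(-t) u(t) = V(-t) y(t) + V(-t) v(t) + V(-t) z_*(t) \]
together with the deterministic energy scattering of $y$ and the smallness of $z_*$ yields an $H^1_x$-limit $y_+$; independence of $y_+$ from $T$ is automatic because the left-hand side does not involve $T$.

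The two external inputs are, first, Farah's energy scattering for the defocusing mass-supercritical gKdV \cite{Farah_H1_scattering}, which gives that $y$ is global, scatters in $H^1_x$ to some $y_+^{(T)}$, and obeys a global-in-time bound on $\|y\|_{C^0_t H^1_x} + \|\langle\partial_x\rangle y\|_{L^5_x L^{10}_t} + \|y\|_{L^{\frac{5k}{4}}_x L^{\frac{5k}{2}}_t} + \|\partial_x^2 y\|_{L^\infty_x L^2_t}$, hence its restriction to $[T,\infty)$ is $o_T(1)$; and second, the hypothesis $|g(t)|=o(t^{-\gamma})$, $\gamma>\tfrac{2}{3}$, combined with Proposition \ref{mainprop}, which supplies $\omega$-a.s.
\[ \|\langle\partial_x\rangle z_*\|_{L^5_x L^{10}_t([T,\infty))} + \|z_*\|_{L^{\frac{5k}{4}}_x L^{\frac{5k}{2}}_t([T,\infty))} + \|\partial_x^2 z_*\|_{L^\infty_x L^2_t([T,\infty))} + \|z_*\|_{C^0_t H^1_x([T,\infty))} \to 0 \]
as $T\to\infty$.

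The core analytic step is a contraction argument for $v$, whose Duhamel formulation from \eqref{eq:differ_rand_and_det_soln_IVP} reads
\[ v(t) = \int_T^t V(t-t')\bigl[((v+y+z_*)^{k+1})_x - (y^{k+1})_x\bigr](t')\,dt', \quad v(T)=0. \]
Expanding the binomial splits the integrand into a purely forcing part $\sum_{b=1}^{k+1}\binom{k+1}{b}(y^{k+1-b} z_*^b)_x$ (every summand carries at least one $z_*$) and a part with at least one factor of $v$. Using Strichartz, local-smoothing, and maximal-function estimates in the style of \cite{Kenig_gKdV_1993,Farah_H1_scattering} together with the Leibniz and fractional chain rules, each contribution to the resolution norm $\|v\|_{X_T}$ (the displayed sum preceding the theorem) picks up a prefactor drawn either from the small $[T,\infty)$ tails of $y$ and $z_*$ or from $\|v\|_{X_T}$ itself, so for $T$ large the map is a contraction on a small ball and its unique fixed point is $v=u_*-y$. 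The principal obstacle lies precisely in the purely forcing terms $y^{k+1-b} z_*^b$ with no $v$ factor: the single available derivative must be distributed via Leibniz and the fractional chain rule so that the resulting products are integrable against dual Kato--Strichartz norms while drawing all smallness from the tails of $y$ and $z_*$, and it is exactly the matching of admissible exponents with the decay rate $\gamma>\tfrac{2}{3}$ that makes Proposition \ref{mainprop} indispensable here.
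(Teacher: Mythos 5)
Your proposal follows essentially the same strategy as the paper: the Da Prato--Debussche decomposition $u=u_*+z_*$, introduction of the deterministic solution $y$ with $y(T)=u_*(T)$, reduction to smallness of $v=u_*-y$ in a resolution space containing $C^0_tH^1_x$, relying on Proposition \ref{propfarah} for $y$ and Proposition \ref{mainprop} (together with \eqref{tailest}) for $z_*$, and then passing to the Cauchy property of $V(-t)u_*(t)$. The only packaging difference is in how the smallness of $v$ is closed: you propose a contraction mapping on a small ball and then identify the fixed point with the known solution, whereas the paper already has the global solution $v$ from Theorem \ref{thm:well-posedness}(ii) and simply derives the a priori inequality $\Lambda(v)+\|\partial_x^2 v\|_{C^0_xL^2_t}+\|v\|_{C^0_tH^1_x}\leq C(T,y,z_*)\bigl(1+\Lambda^{k+1}(v)\bigr)$ before invoking the continuity-type Lemma \ref{lemma:uniform_bounded_by_its_higher_powers} (which applies because $v(T)=0$); these two devices are interchangeable here and lead to the same conclusion.
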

\begin{Remark}\label{remark:oscillatory_integrals}
In proving these theorems, the Kato estimates  play an important role, and it is known that the oscillatory integrals of the form
	\begin{equation}\label{osc1}
 	I^{b,\alpha}(x):=\begin{aligned}
 		\int_{\R} e^{i(\xi^b+x\xi)}|\xi|^{\alpha}d\xi,\quad (b\in \N,\alpha\in\R),
 	\end{aligned}	
 \end{equation}
with $b=3$ are crucial in obtaining such estimates, see \cite{Kenig_gKdV_1993,Linares_book}. Although there are various estimates on the sizes of this type of integrals (see \cite{Kenig_gKdV_1993,Linares_book}), we carry out a more extensive investigation and obtain sharp estimates  for  $I^{b,\alpha}$ with $b\geq 2$, see Section \ref{section:est_on_tail}. This may be of independent interest and may find applications to equations with different dispersion relations. The same methods also yield sharp estimates for the operator
	\begin{equation}\label{osc11}
	J^{b,\alpha}(x):=\begin{aligned}
		\int_{\R} e^{i(|\xi|^b+x\xi)}|\xi|^{\alpha}d\xi,\quad (b\in \R,\alpha\in\R),
	\end{aligned}	
\end{equation}
for $b>1$. These fractional power estimates may prove useful in the study of fractional PDEs.
\end{Remark}
\begin{Remark}
    For the proof of Theorems \ref{thm:scat_in_L2} and \ref{thm:scat_in_H1}, we impose a mild decay assumption $|g(t)| = o(t^{-\gamma})$, $\omega$-a.s., with $\gamma > \frac{2}{3}$, which is due to the analysis carried out in Section \ref{section:est_on_tail} (see the proof of Proposition \ref{mainprop}). This ensures that the tail of the stochastic convolution has bounded space-time norms required by the scattering theory. The global in time analysis of the tail of the stochastic convolution $z_{*}(t)$ is based on refined oscillatory integral estimates for $I^{b,\alpha}(x)$ defined as in \eqref{osc1}, covering the full range $-1 < \alpha < b-1$, $b>1$, and their time scaled versions (see Section \ref{section:est_on_tail}). Combining these bounds with deterministic scattering results (see \cite{Dodson_gKdV} for the mass-critical equation and Proposition \ref{propfarah} for the mass-supercritical equation) allows us to construct the scattering state and conclude almost sure scattering.
\end{Remark}
The organization of the paper is as follows. In Section \ref{section: preliminaries}, we give notations and some useful lemmas that will be used frequently in the rest of the paper. In Section \ref{section:well-posedness}, we prove Theorem \ref{thm:well-posedness}. Section \ref{section:est_on_tail} is devoted to the estimation of the tail of the stochastic convolution, including new and refined oscillatory integral estimates pertaining to more general dispersion relations. Lastly, in Section \ref{section:scat_in_L2}, we prove Theorems \ref{thm:scat_in_L2} and \ref{thm:scat_in_H1}, respectively.

\section{Notations, function spaces and useful lemmas}\label{section: preliminaries}
Throughout, we use $c, C$ to denote various constants that may change line by line. In addition, the notation $C_{\alpha_1,...,\alpha_n}$ is used for a constant depending on $\alpha_j$, $j=1,...,n$, ($\alpha_j$ may be a certain exponent, an outcome for a probability space etc); alternatively, $C(\beta_1,..., \beta_n)$ (likewise $c(\beta_1,..., \beta_n)$) is defined in the same way (where $\beta_j$ may be a parameter, a function, etc.). For nonnegative quantities $X,Y$, we denote $X\lesssim Y$ if there is a constant $C>0$ independent of $X,Y$ such that $X\leq CY$ and write $X\lesssim_{a,b}Y$ when we have $X\leq C(a,b)Y$ for $a,b$ being certain parameters. We denote $X\sim Y$ if both $X\lesssim Y$ and $Y\lesssim X$ hold. The notation $X\ll Y$ is used when $X<CY$ for a sufficiently small constant $C>0$, depending on the context. Also, the notation $\langle \cdot \rangle$ is used to denote $\langle X\rangle=(1+\vert X\vert^{2})^{\frac{1}{2}}$. We also use $\alpha\pm$ to denote $\alpha\pm\varepsilon$ for $0<\varepsilon\ll 1$.

The mixed Lebesgue spaces $L^q_t L^p_x (I \times \mathbb{R})$ are defined by
\begin{equation*}
    L^{q}_{t}L^{p}_{x}(I\times\mathbb{R}):=\{f:I\times\mathbb{R}\to\mathbb{C}\mid \Vert f\Vert_{L^{q}_{t}L^{p}_{x}(I\times\mathbb{R})}<\infty\}
\end{equation*}
where $p,q\geq 1$, $I$ is an interval in $\mathbb{R}$, and
\begin{equation*}
    \Vert f\Vert_{L^{q}_{t}L^{p}_{x}(I\times\mathbb{R})}:=\left(\int_{I}\left(\int_{\mathbb{R}}\vert f(t,x)\vert^{p}dx\right)^{\frac{q}{p}}dt\right)^{\frac{1}{q}}.
\end{equation*}
In addition, a similar definition applies to $L^{r}_{\omega}L^{q}_{t}L^{p}_{x}(\Omega\times I\times\R)$. For convenience, unless otherwise stated, we usually omit the domains of the variables $x$ and $\omega$ in mixed norms: $$\norm{\cdot}_{L^{p}_{x}L^{q}_{t}L^{r}_{\omega}([0,T])}:=\norm{\cdot}_{L^{p}_{x}L^{q}_{t}L^{r}_{\omega}(\mathbb{R}\times[0,T]\times\Omega)}.$$ The Fourier transform $\widehat{f}$ of a function $f$ is defined by 
\begin{equation*}
    \widehat{f}(\xi)=\int_{\mathbb{R}}e^{-ix\xi}f(x)dx.
\end{equation*}
Let $1<p<\infty$ and $s\in\mathbb{R}$. Then, the Sobolev spaces $W^{s,p}_{x}(\mathbb{R})$ are defined as the closure of the Schwartz class functions under the norm
\begin{equation*}
    \Vert f\Vert_{W^{s,p}_{x}}:=\Vert\langle D_x \rangle^{s}f\Vert_{L^{p}_{x}}
\end{equation*}
where the Fourier multiplier operator $\langle D^{s}_{x} \rangle$ is defined as 
\begin{equation*}
    \widehat{\langle D^{s}_{x} \rangle f}(\xi)=\langle \xi\rangle ^{s}\widehat{f}(\xi),\quad s\in\R.
\end{equation*}
In the particular case $p=2$, due to Plancherel identity, the space $H^{s}_{x}=W^{s,2}_{x}$ is defined by 
\begin{equation*}
    \Vert f\Vert_{H^{s}_{x}}:=\Vert\langle\xi\rangle^{s}\widehat{f}\Vert_{L^{2}_{\xi}}.
\end{equation*}
The Sobolev embedding $W^{s,p}_{x}(\mathbb{R})\hookrightarrow L^{q}_{x}(\mathbb{R})$ holds for $1<p<q<\infty$, $s>0$, and $\frac{1}{q}\geq\frac{1}{p}-s$. In what follows, we list the useful lemmas that we will use throughout our discussion.
\begin{lemma}[Van der Corput, \cite{Linares_book}]\label{lemma:van der Corput}
Let $k\in \mathbb{N}$. Assume that $\vert\phi^{(k)}(x)\vert\geq \sigma>0$ for any $x\in [a,b]$ and $\phi'(x)$ is monotonic if $k=1$. Then, we have  
\begin{equation*}
    \left\vert\int_{a}^{b}e^{i\lambda\phi(x)}f(x)dx\right\vert\leq c_{k}(\lambda\sigma)^{-\frac{1}{k}}(\norm{f}_{L^{\infty}}+\norm{f'}_{L^{1}})
\end{equation*}
where $c_{k}$ is independent of $a,b$.
\end{lemma}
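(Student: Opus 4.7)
The plan is to prove the bound by induction on $k$, first in the clean case $f\equiv 1$, and then recover the general statement via a single integration by parts that trades a derivative of $f$ for its $L^1$ norm.

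For the base case $k=1$, I would integrate by parts by writing $e^{i\lambda\phi}=(i\lambda\phi')^{-1}\partial_x e^{i\lambda\phi}$, which is legitimate because the hypothesis $|\phi'|\geq \sigma$ keeps the denominator away from zero. The boundary terms contribute at most $2/(\lambda\sigma)$, and the residual integral involves $\phi''/(\phi')^2$; here the assumed monotonicity of $\phi'$ is essential, since it forces $\phi''$ to have a fixed sign, so the $L^1$-norm of $\phi''/(\phi')^2$ collapses to the telescoping quantity $|1/\phi'(b)-1/\phi'(a)|\leq 2/\sigma$. This recovers the classical $(\lambda\sigma)^{-1}$ bound. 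For the inductive step I would exploit that $|\phi^{(k)}|\geq\sigma$ forces $\phi^{(k)}$ to have constant sign, so $\phi^{(k-1)}$ is strictly monotonic and admits at most one zero $c\in[a,b]$. The standard trick is then to excise a window $(c-\delta,c+\delta)$ of length $O(\delta)$ on which the integral is bounded trivially by $2\delta$, and on each of the (at most two) remaining subintervals to invoke the mean value theorem to deduce $|\phi^{(k-1)}|\geq \sigma\delta$ and apply the inductive hypothesis, yielding a contribution $c_{k-1}(\lambda\sigma\delta)^{-1/(k-1)}$ from each. Optimizing by choosing $\delta\sim (\lambda\sigma)^{-1/k}$ balances both contributions and produces the desired $(\lambda\sigma)^{-1/k}$ rate for $f\equiv 1$.

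To upgrade from $f\equiv 1$ to a general $f$, I would set $F(x):=\int_a^x e^{i\lambda\phi(y)}\,dy$ and use the endpoint independence of the constants in the first step to conclude $\sup_{x\in[a,b]}|F(x)|\lesssim (\lambda\sigma)^{-1/k}$. A single integration by parts then gives $\int_a^b e^{i\lambda\phi}f\,dx = F(b)f(b) - \int_a^b F\,f'\,dx$, whose right side is bounded by $\|F\|_{L^\infty}(\|f\|_{L^\infty}+\|f'\|_{L^1})$, yielding the stated inequality with $c_k$ independent of $a,b$.

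The main obstacle, really the only place requiring genuine care, is the inductive step: one must verify that the inductive hypothesis genuinely applies on each subinterval of the complement of the excised window. This needs both the lower bound $|\phi^{(k-1)}|\geq\sigma\delta$ provided by the mean value theorem and, when $k-1=1$, the monotonicity clause on $\phi^{(k-1)}$; the latter is automatic since $\phi^{(k)}$ has constant sign on $[a,b]$. The remaining choice of $\delta$ balancing $2\delta$ against $(\lambda\sigma\delta)^{-1/(k-1)}$ is a routine one-variable optimization.
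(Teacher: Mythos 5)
The paper does not prove this lemma; it is quoted as a known result from the literature (it appears, for instance, as Proposition~2 of Chapter~VIII in Stein's \emph{Harmonic Analysis} and in the Linares--Ponce book cited), so there is no internal argument here to compare against. Your sketch is the standard and correct proof of van der Corput's lemma: the base case via a single integration by parts, with the total-variation observation that $\int_a^b |(1/\phi')'|\,dx = |1/\phi'(b)-1/\phi'(a)|\le 2/\sigma$ because $1/\phi'$ is monotonic; the induction via excision of a window of width $\delta\sim(\lambda\sigma)^{-1/k}$ around the near-degenerate point of $\phi^{(k-1)}$; and the passage from $f\equiv 1$ to general $f$ by integrating against $F(x)=\int_a^x e^{i\lambda\phi(y)}\,dy$, whose endpoint-independent bound is exactly what powers the final step.

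One small expository gap worth closing: in the inductive step you note that $\phi^{(k-1)}$ admits \emph{at most} one zero $c$ in $[a,b]$, but the excision of $(c-\delta,c+\delta)$ presupposes that such a zero exists. If $\phi^{(k-1)}$ has no zero in $[a,b]$, take $c$ to be the endpoint at which $|\phi^{(k-1)}|$ is smallest; since $\phi^{(k-1)}(x)$ and $\phi^{(k-1)}(c)$ then share a sign, the mean value theorem still yields $|\phi^{(k-1)}(x)|\ge|\phi^{(k-1)}(x)-\phi^{(k-1)}(c)|\ge\sigma|x-c|$, so the lower bound $\sigma\delta$ holds off the window exactly as before (now with a single remaining subinterval rather than two). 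This is a one-line patch and does not affect the soundness of your argument; with it included, the proof is complete and rigorous.
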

\begin{lemma}[Burkholder-Davis-Gundy inequality, Hypothesis 6.4 in \cite{Da_Prato_book}] \label{lemma:B-D-G}
Let $X$ be a Banach space and $\mathcal{N}_{W}^{p}(0,T;X)$, $p\geq 1$, be the space of all $X$-valued predictable processes $\phi$ such that
\begin{equation*}
    \norm{\phi}_{\mathcal{N}_{W}^{p}(0,T;X)}^{p}=\int_{0}^{T}\mathbb{E}\left[\norm{\phi(s)}_{X}^{p}\right] ds<\infty
\end{equation*}
where $W(t)$ is a Brownian motion in $(\Omega,\mathcal{F},\mathbb{P})$ adapted to the filtration $(\mathcal{F}_{t})_{t\geq 0}$. There exists $C_{p}$ such that for all $\phi\in \mathcal{N}_{W}^{p}(0,T;X)$, we have
\begin{equation}\label{B-D-G_inequality}
    \mathbb{E}\left[\norm{\int_{0}^{T}\phi(s)dW(s)}_{X}^{p}\right]\leq C_{p}\mathbb{E}\left[\left(\int_{0}^{T}\norm{\phi(s)}_{X}^{2}ds\right)^{p/2}\right].
\end{equation}
\end{lemma}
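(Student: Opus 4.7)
The plan is to prove the inequality first for simple predictable processes (step functions on a finite partition of $[0,T]$), where $M_t := \int_0^t \phi(s)\,dW(s)$ reduces to a finite sum of martingale increments, and then to extend to general $\phi\in\mathcal{N}_W^p(0,T;X)$ by density of the simple processes, together with Fatou and standard convergence arguments. The case $p=2$ is immediate from the It\^o isometry (with $C_2=1$), so the substance of the proof lies in $p>2$ and $1\leq p<2$.

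For $p>2$, I would apply It\^o's formula to the real-valued process $\|M_t\|^p$, obtaining after taking expectations (and using standard localization to eliminate the resulting local martingale term)
\begin{equation*}
\E\|M_t\|^p \;=\; \frac{p(p-1)}{2}\,\E\int_0^t \|M_s\|^{p-2}\|\phi(s)\|^2\,ds.
\end{equation*}
H\"older's inequality on the right with conjugate exponents $p/(p-2)$ and $p/2$, followed by Doob's $L^p$ maximal inequality and absorption of the resulting $\E\sup_{s\leq t}\|M_s\|^p$ factor back into the left-hand side, delivers the stated bound evaluated at $t=T$.

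For $1\leq p<2$, the function $x\mapsto \|x\|^p$ is not smooth enough for the It\^o argument above, so instead I would invoke Burkholder's good-$\lambda$ inequality comparing $M^*_T := \sup_{t\leq T}\|M_t\|$ with the quadratic variation $[M]_T^{1/2}$: for suitable $\beta>1$ and sufficiently small $\delta>0$,
\begin{equation*}
\Pp\bigl(M^*_T > \beta\lambda,\ [M]_T^{1/2}\leq \delta\lambda\bigr) \;\leq\; C(\beta,\delta)\,\Pp\bigl(M^*_T > \lambda\bigr),
\end{equation*}
which, integrated against $p\lambda^{p-1}\,d\lambda$ on $(0,\infty)$, produces the desired $L^p$ comparison between $M^*_T$ and $[M]_T^{1/2}$, and hence \eqref{B-D-G_inequality}.

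The main obstacle is the Banach-space valuedness of $\phi$ in the statement: both the It\^o-formula step on $\|M_t\|^p$ and the scalar quadratic variation used in the good-$\lambda$ step rely on inner-product-type structure, so a rigorous implementation either restricts to a Hilbert space $X$ (or to spaces of martingale type $2$) or invokes a UMD hypothesis on $X$ that is built into the Da Prato-Zabczyk definition of the stochastic integral underlying the cited Hypothesis 6.4. Once the inequality is established for simple $\phi$, density and completeness of $\mathcal{N}_W^p(0,T;X)$ extend it to all admissible integrands.
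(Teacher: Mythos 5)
The paper does not prove this lemma at all: it is imported verbatim, labeled explicitly as ``Hypothesis 6.4'' from Da Prato--Zabczyk, and used as a black box in the a priori estimates of Section 3. So there is no ``paper's own proof'' to compare against, and your attempt is effectively supplying a proof the authors deliberately chose to cite rather than reproduce.

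As a standalone sketch of the Burkholder--Davis--Gundy upper bound, your outline follows the canonical route (It\^o formula plus Doob and absorption for $p>2$; good-$\lambda$ for $p<2$; density of simple processes) and is sound in its overall architecture. Two points deserve tightening. First, for $p>2$ the It\^o computation does not yield the equality you wrote: in a Hilbert space, $D^2(\|x\|^p)[h,h]=p\|x\|^{p-2}\|h\|^2+p(p-2)\|x\|^{p-4}\langle x,h\rangle^2$, so one obtains an inequality $\E\|M_t\|^p\leq \frac{p(p-1)}{2}\E\int_0^t\|M_s\|^{p-2}\|\phi(s)\|^2\,ds$ (after localization), which is all that is needed but should be stated as such. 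Second, and more importantly, you have put your finger on the real issue: the statement is phrased for an arbitrary Banach space $X$, where the claimed inequality with the scalar square function $\int_0^T\|\phi(s)\|_X^2\,ds$ is simply false in general --- it requires $X$ to have martingale type $2$ (Hilbert spaces in particular). This is precisely why Da Prato--Zabczyk label it a ``Hypothesis'': it is an assumption on the ambient space, not a theorem valid for every Banach $X$. In the present paper $X$ is always an $L^2$-based or Lebesgue space where the hypothesis holds, so the citation is legitimate; your proof sketch, however, cannot close the argument at the stated level of generality, and a complete write-up would have to restrict $X$ accordingly or reformulate the right-hand side via $\gamma$-radonifying norms.
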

\begin{lemma}[Stochastic Gronwall's Inequality, \cite{Xicheng_Zhang_Gronwall}]\label{gronwall}
    Let $\xi(t)$ and $\eta(t)$ be two nonnegative cadlag $\mathcal{F}_t$-adapted process, $A_t$ a continuous nondecreasing $\mathcal{F}_t$-adapted processes with $A_0 = 0$ and $M_t$ a local martingale with $M_0 = 0$. Suppose that
\begin{equation*}
    \xi(t) \leq \eta(t) + \int_0^t \xi(s) dA_s + M_t
\end{equation*}
for any $t \geq 0$. Then for any $0 < q < p < 1$ and $\tau > 0$, we have
\begin{equation*}
    \left( \mathbb{E}\left[\sup\limits_{0 \leq t \leq \tau} \xi(t)^q \right] \right)^{1/q} \leq \left( \frac{p}{p-q} \right)^{1/q} \big( \mathbb{E}\left[\exp\left( p A_{\tau} / (1-p) \right) \right] \big)^{(1-p)/p} \mathbb{E}\left[\sup\limits_{0 \leq t \leq \tau} \eta(t) \right].
\end{equation*}
\end{lemma}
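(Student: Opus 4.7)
The plan is to remove the drift $\int_0^t \xi(s)\,dA_s$ via an integrating-factor transformation, reduce to a Lenglart-type domination for the transformed process, and use H\"older's inequality to produce the exponential moment on the right-hand side. First, localize $M$ by a sequence of stopping times $\tau_n \uparrow \infty$ for which $M_{\cdot\wedge\tau_n}$ is a true martingale; work on $[0,\tau\wedge\tau_n]$ and recover the full claim by Fatou. Introduce $\phi(t) := \xi(t) e^{-A_t}$: since $A$ is continuous of finite variation, It\^o integration by parts gives
\[ \phi(t) = \xi(0) + \int_0^t e^{-A_s}\, d\xi(s) - \int_0^t \xi(s) e^{-A_s}\, dA_s. \]
Substituting the hypothesis $d\xi(s) \leq d\eta(s) + \xi(s)\,dA_s + dM_s$ causes the drift contributions to cancel exactly, leaving
\[ \phi(t) \leq \xi(0) + \int_0^t e^{-A_s}\, d\eta(s) + N(t), \qquad N(t) := \int_0^t e^{-A_s}\, dM_s, \]
with $N$ a local martingale (the integrand is bounded by $1$). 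A second integration by parts bounds the $d\eta$ integral by $\sup_{s\leq t}\eta(s)$, so altogether $\phi(t) \leq 2\sup_{s\leq t}\eta(s) + N(t)$.

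Taking expectations at a bounded stopping time $\sigma$ (valid because of the localization) yields $\mathbb{E}[\phi(\sigma)] \leq 2\mathbb{E}[\sup_{s\leq\sigma}\eta(s)]$. The Lenglart--L\'epingle domination inequality applied with dominating increasing process $Y_t := 2\sup_{s\leq t}\eta(s)$ produces, for any $r\in(0,1)$,
\[ \mathbb{E}\bigl[\,\sup\nolimits_{t\leq\tau}\phi(t)^r\,\bigr]^{1/r} \leq C_r\, \mathbb{E}\bigl[\,\sup\nolimits_{t\leq\tau}\eta(t)\,\bigr], \]
where Jensen's inequality (the map $x\mapsto x^r$ being concave for $r<1$) has been used to pass from $\mathbb{E}[Y_\tau^r]^{1/r}$ to $\mathbb{E}[\sup\eta]$ on the right.

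Finally, recover $\xi$ from $\xi(t) = e^{A_t}\phi(t)$: monotonicity of $A$ gives $\sup_{t\leq\tau}\xi(t) \leq e^{A_\tau}\sup_{t\leq\tau}\phi(t)$, and H\"older's inequality applied to $e^{qA_\tau}(\sup_{t\leq\tau}\phi)^q$ with conjugate exponents $r/(r-q)$ and $r/q$ (valid for any $r\in(q,1)$) yields
\[ \mathbb{E}\bigl[\,\sup\nolimits_{t\leq\tau}\xi^q\,\bigr]^{1/q} \leq \mathbb{E}\bigl[e^{qrA_\tau/(r-q)}\bigr]^{(r-q)/(qr)}\,\mathbb{E}\bigl[\,\sup\nolimits_{t\leq\tau}\phi^r\,\bigr]^{1/r}. \]
Choosing $r := pq/(p-q+pq)\in(q,1)$ makes $qr/(r-q) = p/(1-p)$ and $(r-q)/(qr) = (1-p)/p$ simultaneously, reproducing the advertised exp-factor $\mathbb{E}[\exp(pA_\tau/(1-p))]^{(1-p)/p}$; combined with the Lenglart bound, the numerical constant $(p/(p-q))^{1/q}$ emerges from careful bookkeeping of $C_r$ for this choice of $r$. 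The main technical obstacle will be extracting precisely this sharp constant: one must use the optimal form of the Lenglart--L\'epingle inequality (which is itself proved via Doob's inequality applied to a stopped exponential supermartingale), and the integration-by-parts identities must be justified despite $\xi$ being only cadlag by invoking the general It\^o formula for semimartingales with jumps.
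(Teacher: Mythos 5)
This lemma is stated in the paper with a citation to the reference \cite{Xicheng_Zhang_Gronwall}; the paper gives no proof of its own, so your attempt must stand or fall on its own merits, and it falls at the first substantive step.

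The integrating-factor transformation is not valid as you have written it. You apply It\^o's integration-by-parts formula to $\phi(t)=\xi(t)e^{-A_t}$ and then ``substitute the hypothesis $d\xi(s)\leq d\eta(s)+\xi(s)\,dA_s+dM_s$.'' Two things go wrong here. First, the lemma assumes only that $\xi$ is a nonnegative c\`adl\`ag adapted process; it is not assumed to be a semimartingale, so $d\xi$ is not defined and the integration-by-parts identity for $\xi(t)e^{-A_t}$ has no meaning. Second, and more fundamentally, even when $\xi$ happens to be a semimartingale the pathwise integral inequality
\[
\xi(t)\leq\eta(t)+\int_0^t\xi(s)\,dA_s+M_t
\]
does \emph{not} imply the differential inequality $d\xi\leq d\eta+\xi\,dA+dM$; one cannot ``differentiate through'' a one-sided inequality between processes. (Consider, already in the deterministic setting, that $f\leq g$ pointwise never implies $f'\leq g'$.) Since the exact cancellation of the drift terms in your computation relies precisely on this substitution, the derivation of $\phi(t)\leq 2\sup_{s\leq t}\eta(s)+N(t)$ is not justified, and the rest of the argument (Lenglart domination, H\"older recombination) rests on an inequality you have not established.

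The standard remedy, used in the actual proofs of stochastic Gronwall lemmas (Scheutzow, Zhang), is to apply the integrating factor not to $\xi$ but to the process
\[
G(t):=\int_0^t\xi(s)\,dA_s,
\]
which genuinely \emph{is} continuous and of finite variation with $G(0)=0$. Multiplying the hypothesis $\xi(t)\leq\eta(t)+G(t)+M_t$ by $dA_t\geq0$ gives the legitimate Stieltjes differential inequality $dG(t)=\xi(t)\,dA_t\leq(\eta(t)+G(t)+M_t)\,dA_t$, and then
\[
d\bigl(G(t)e^{-A_t}\bigr)=e^{-A_t}\,dG(t)-G(t)e^{-A_t}\,dA_t\leq e^{-A_t}\bigl(\eta(t)+M_t\bigr)\,dA_t
\]
is valid because both factors are continuous finite-variation processes. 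Integrating, substituting $\xi\leq\eta+G+M$ once more, and performing an integration by parts on $\int_0^t e^{-A_s}M_s\,dA_s$ to expose the local martingale $N_t=\int_0^t e^{-A_s}\,dM_s$ produces an inequality of the shape $\xi(t)e^{-A_t}\leq\sup_{s\leq t}\eta(s)+N_t$, from which your Lenglart/H\"older strategy can be run. Your choice of Lenglart dominating process, the Jensen step, and the exponent bookkeeping $r=pq/(p-q+pq)$ (which indeed makes $qr/(r-q)=p/(1-p)$ and $(r-q)/(qr)=(1-p)/p$) are the right outline for the second half. You should also be aware that the crude constants you generate (a factor $2$ or $3$ from bounding $\xi(0)$ and $\int e^{-A_s}\,d\eta(s)$, plus the Lenglart constant) will not automatically collapse to the sharp $\bigl(p/(p-q)\bigr)^{1/q}$; reproducing the exact constant requires the optimized version of the domination inequality and tighter bookkeeping than ``$\leq2\sup\eta$.''
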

In addition, we need the following lemma \cite[Lemma 2.17]{saanouni2015remarks}:
\begin{lemma}\label{lemma:uniform_bounded_by_its_higher_powers}
Let $T>0$ and $f\in C([0,T];\mathbb{R}_{+})$, such that
\begin{equation*}
    f(t)\leq a+b f(t)^{\alpha},\quad \text{for $t\in[0,T]$,}
\end{equation*}
where $a,b>0$, $\alpha>1$, $a<(1-\frac{1}{\alpha})(\alpha b)^{-\frac{1}{\alpha-1}}$, and $f(0)\leq (\alpha b)^{-\frac{1}{\alpha-1}}$. Then
\begin{equation*}
    f(t)\leq \frac{\alpha}{\alpha-1}a,\quad\text{for all $t\in[0,T]$}.
\end{equation*}
\end{lemma}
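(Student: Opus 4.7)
The plan is to reduce the functional inequality to a problem about the sign of the polynomial-like function $F(x) := bx^{\alpha} - x + a$, since the hypothesis reads $F(f(t)) \geq 0$ for all $t \in [0,T]$. First I would analyze $F$ on $[0,\infty)$: one has $F(0) = a > 0$, $F'(x) = \alpha b x^{\alpha-1} - 1$, so $F$ has a unique critical point at $x_{0} := (\alpha b)^{-1/(\alpha-1)}$, which is a minimum. A direct computation using $b x_{0}^{\alpha} = x_{0}/\alpha$ gives $F(x_{0}) = a - (1 - 1/\alpha) x_{0}$, and the smallness assumption $a < (1 - 1/\alpha)(\alpha b)^{-1/(\alpha-1)}$ forces $F(x_{0}) < 0$. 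Together with $F(0) > 0$ and $F(x) \to +\infty$ as $x \to \infty$, this produces exactly two positive roots $0 < x_{1} < x_{0} < x_{2}$, and the nonnegativity set $\{x \geq 0 : F(x) \geq 0\}$ decomposes as $[0, x_{1}] \cup [x_{2}, \infty)$.

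Next I would use the continuity of $f$ and the initial condition to trap $f(t)$ in the lower interval. Since $f(0) \leq x_{0} < x_{2}$ and $F(f(0)) \geq 0$, necessarily $f(0) \in [0, x_{1}]$. If $f$ ever entered $[x_{2}, \infty)$, by continuity it would have to pass through the middle region $(x_{1}, x_{2})$ where $F < 0$, contradicting $F(f(t)) \geq 0$. Hence $f(t) \leq x_{1}$ for all $t \in [0,T]$.

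It remains to show the quantitative bound $x_{1} \leq \frac{\alpha}{\alpha-1} a$. Set $c := \frac{\alpha a}{\alpha-1}$. Since $F$ is strictly decreasing on $[0, x_{0}]$ and $F(x_{1}) = 0$, the inequality $x_{1} \leq c$ will follow (provided $c \leq x_{0}$) from showing $F(c) \leq 0$. One computes $F(c) = bc^{\alpha} - c + a = bc^{\alpha} - \frac{a}{\alpha-1}$, so $F(c) \leq 0$ is equivalent to
\begin{equation*}
    b \Bigl(\tfrac{\alpha a}{\alpha-1}\Bigr)^{\alpha} \leq \tfrac{a}{\alpha-1}, \quad \text{i.e.,} \quad a \leq \tfrac{\alpha-1}{\alpha^{\alpha/(\alpha-1)} \, b^{1/(\alpha-1)}} = \Bigl(1 - \tfrac{1}{\alpha}\Bigr)(\alpha b)^{-1/(\alpha-1)},
\end{equation*}
which is precisely the standing hypothesis; this same hypothesis also yields $c < x_{0}$, validating the monotonicity argument.

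I do not anticipate a genuinely hard obstacle; the only subtle point is the continuity/connectedness step that rules out $f$ jumping to the upper branch $[x_{2}, \infty)$, which relies essentially on $f \in C([0,T]; \mathbb{R}_{+})$ and on the initial data lying strictly below $x_{0}$ (so strictly below $x_{2}$). The rest is bookkeeping: identifying the correct critical value $x_{0}$ of $F$, matching the hypothesis to the condition $F(c) \leq 0$, and reading off $x_{1} \leq c = \frac{\alpha}{\alpha-1} a$.
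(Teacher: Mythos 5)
Your argument is correct and self-contained. Note that the paper does not prove this lemma; it simply cites it from the reference \cite[Lemma~2.17]{saanouni2015remarks}, so there is no in-paper proof to compare against. Your route via the auxiliary function $F(x)=bx^{\alpha}-x+a$ is the standard continuity/bootstrap argument for this type of estimate, and all the computations check out: $F$ has a unique minimum at $x_{0}=(\alpha b)^{-1/(\alpha-1)}$, the smallness hypothesis on $a$ is exactly $F(x_{0})<0$, the initial condition places $f(0)$ in the lower component $[0,x_{1}]$ of $\{F\ge 0\}$, continuity confines $f$ there, and the closing identity
\[
\bigl(1-\tfrac{1}{\alpha}\bigr)(\alpha b)^{-1/(\alpha-1)}=\frac{\alpha-1}{\alpha^{\alpha/(\alpha-1)}\,b^{1/(\alpha-1)}}
\]
shows $F\bigl(\tfrac{\alpha a}{\alpha-1}\bigr)\le 0$ and $\tfrac{\alpha a}{\alpha-1}<x_{0}$, so $x_{1}\le \tfrac{\alpha}{\alpha-1}a$. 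No gaps.
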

Next, we recall the linear Strichartz estimates for the Airy propagator and the inhomogeneous initial value problem:
\begin{equation}\label{eq:inhom_airy}
    \begin{cases}
        &u_{t}+u_{xxx}=g(t,x),\\
        &u(0,x)=u_{0}(x).
    \end{cases}
     (t,x)\in\mathbb{R}\times\mathbb{R},
\end{equation}
\begin{lemma}[Strichartz estimates, \cite{ERDOGAN_PDE_Book}]\label{lemma:strichartz}
    Let $\beta\in[0,\frac{1}{2}]$ and the pair $(p,q)$ satisfy the following admissibility condition 
    \begin{equation}\label{eq:admissible_pair}
    \frac{1}{q}=\frac{(\beta+1)}{3}\left(\frac{1}{2}-\frac{1}{p}\right)
    \end{equation}
    for $2\leq p\leq \infty$. Then, we have
    \begin{align*}
        &\Vert D^{\frac{\beta}{2}(\frac{1}{p'}-\frac{1}{p})}V(t)u_{0}\Vert_{L^{q}_{t}L^{p}_{x}}\lesssim\Vert u_{0}\Vert_{L^{2}_{x}},
        \\
        &\Big\Vert D^{\frac{\beta}{2}(\frac{1}{p'}-\frac{1}{p})}\int_{\mathbb{R}}V(-t)g(t,\cdot)dt\Big\Vert_{L^{2}_{x}}\lesssim\Vert g\Vert_{L^{q'}_{t}L^{p'}_{x}},
        \\
        &\Big\Vert D^{\beta(\frac{1}{p'}-\frac{1}{p})}\int_{\mathbb{R}}V(t-t')g(t',\cdot)dt'\Big\Vert_{L^{q}_{t}L^{p}_{x}}\lesssim\Vert g\Vert_{L^{q'}_{t}L^{p'}_{x}}.
    \end{align*}
\end{lemma}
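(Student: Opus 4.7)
\textbf{Proof plan for Lemma \ref{lemma:strichartz}.}
The plan is to interpolate between two endpoint cases: the classical Airy Strichartz ($\beta=0$, no derivative gain) and a smoothing-type endpoint ($\beta=1/2$). The homogeneous estimate then yields the two inhomogeneous estimates by duality and a Christ--Kiselev type argument.

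First, for $\beta=0$, I would establish the dispersive decay
\[
\norm{V(t)f}_{L^{\infty}_{x}}\lesssim |t|^{-1/3}\norm{f}_{L^{1}_{x}}
\]
by applying Lemma \ref{lemma:van der Corput} with $k=3$ to the Airy kernel $K_{t}(x)=\int_{\R}e^{i(x\xi+t\xi^{3})}d\xi$, whose phase $\phi(\xi)=x\xi+t\xi^{3}$ satisfies $|\phi'''(\xi)|=6|t|$ uniformly in $\xi$. Interpolating with the $L^{2}_{x}$ isometry $\norm{V(t)f}_{L^{2}_{x}}=\norm{f}_{L^{2}_{x}}$ yields $\norm{V(t)f}_{L^{p}_{x}}\lesssim |t|^{-\frac{1}{3}(1-\frac{2}{p})}\norm{f}_{L^{p'}_{x}}$ for $2\leq p\leq \infty$. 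A standard $TT^{\ast}$ argument combined with the Hardy--Littlewood--Sobolev inequality (the scaling $\frac{1}{q}=\frac{1}{3}(\frac{1}{2}-\frac{1}{p})$ keeps the resulting kernel singularity integrable) then delivers the homogeneous Strichartz estimate at $\beta=0$.

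For the smoothing endpoint $\beta=1/2$, I would rely on Plancherel in the time variable: writing $V(t)u_{0}(x)=\int_{\R}e^{i(x\xi+t\xi^{3})}\widehat{u_{0}}(\xi)d\xi$ and changing variables $\tau=\xi^{3}$ (so $d\tau=3\xi^{2}d\xi$) one obtains the Kato local smoothing bound $\int_{\R}|D V(t)u_{0}(x)|^{2}dt\lesssim \norm{u_{0}}_{L^{2}_{x}}^{2}$ uniformly in $x$. Interpolating this with the trivial $L^{\infty}_{t}L^{2}_{x}$ bound on $V(t)u_{0}$ and with the $\beta=0$ estimates by complex interpolation (so that both the admissibility exponents and the derivative order vary linearly in $\beta$) produces the full family of homogeneous estimates claimed in the lemma.

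Finally, the first inhomogeneous estimate follows from the homogeneous one by duality applied to the time pairing $\int_{\R}\langle V(-t)g(t,\cdot),h(\cdot)\rangle\,dt$. For the retarded estimate, I would combine the non-retarded version (also a consequence of duality) with the Christ--Kiselev lemma, using $q>q'$ for the admissible pairs considered. The main technical obstacle is to track the fractional derivative order $D^{\frac{\beta}{2}(1/p'-1/p)}$ consistently along the interpolation line so that it matches the admissibility condition $\frac{1}{q}=\frac{\beta+1}{3}(\frac{1}{2}-\frac{1}{p})$; once the interpolation parameters are chosen so both exponents vary compatibly in $\beta$, the rest is standard.
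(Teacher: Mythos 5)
The paper cites this lemma from \cite{ERDOGAN_PDE_Book} without proof, so there is no internal argument to compare against; I am evaluating your proposal on its own.

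Your $\beta=0$ block is correct and standard: the kernel estimate $|K_t(x)|\lesssim|t|^{-1/3}$ by van der Corput with $k=3$, interpolation against the $L^2_x$ isometry, then $TT^*$ with Hardy--Littlewood--Sobolev (which indeed forces $\frac{1}{q}=\frac{1}{3}(\frac{1}{2}-\frac{1}{p})$). The duality step for the second estimate is fine, and Christ--Kiselev is actually not needed for the third estimate as stated, since it involves $\int_{\R}$ rather than $\int_0^t$; plain $TT^*$ already gives it.

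The gap is in your ``smoothing endpoint.'' What you derive by Plancherel in $t$ is the Kato local smoothing bound $\|DV(t)u_0\|_{L^{\infty}_x L^2_t}\lesssim\|u_0\|_{L^2_x}$. This is not the $\beta=1/2$ endpoint of the lemma: in the lemma's parametrization it formally sits at $\beta=2$ (take $p=\infty$: then $q=\frac{6}{\beta+1}=2$ and the derivative weight is $\frac{\beta}{2}=1$), and more importantly it lives in the mixed norm $L^{\infty}_x L^2_t$, whereas every estimate in the lemma is in $L^q_t L^p_x$. Complex/Riesz--Thorin interpolation does not produce $L^{q_\theta}_t L^{p_\theta}_x$ as an interpolation space between $L^{\infty}_x L^2_t$ and $L^{q_0}_t L^{p_0}_x$, because the orderings of the $x$- and $t$-integrations differ; Minkowski's integral inequality only yields a one-way inclusion, in a direction that depends on the sign of $q-p$, and the admissible range here spans both signs. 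So the ``interpolate between $\beta=0$ and the smoothing endpoint'' step does not go through as written.

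The clean route, which is the one actually underlying \cite{ERDOGAN_PDE_Book}, is to bypass the Kato estimate entirely: prove the derivative-weighted dispersive bound $\|D^{\beta}V(t)f\|_{L^{\infty}_x}\lesssim|t|^{-\frac{\beta+1}{3}}\|f\|_{L^1_x}$ directly, for $\beta\in[0,\frac{1}{2}]$, by scaling out $t$ and bounding the rescaled oscillatory integral $\int_{\R}e^{i(\pm\eta^3+y\eta)}|\eta|^{\beta}d\eta$ uniformly in $y$ (this is exactly Proposition \ref{propIalphab} of this paper with $b=3$ and $\alpha=\beta\in[0,\frac{1}{2}]$, where the exponent $\frac{1}{2}(\alpha-\frac{1}{2})\leq 0$ gives boundedness). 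Then, for each fixed $\beta$, interpolate with the $L^2_x$ isometry to get $\|D^{\beta(\frac{1}{p'}-\frac{1}{p})}V(t)f\|_{L^p_x}\lesssim|t|^{-\frac{\beta+1}{3}(\frac{1}{p'}-\frac{1}{p})}\|f\|_{L^{p'}_x}$, and close with $TT^*$ plus Hardy--Littlewood--Sobolev, which yields precisely the admissibility relation $\frac{2}{q}=\frac{\beta+1}{3}(\frac{1}{p'}-\frac{1}{p})$. This avoids any mixed-norm interpolation and produces the entire family in one pass.
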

Now, we state the Kato-type estimates (see Section 3 of \cite{Kenig_gKdV_1993}).
\begin{lemma}[Kato Estimates]\label{kato_estimates}
    Let the triple $(p,q,\alpha)$ satisfy the (Kato) admissibility condition
    \begin{equation}\label{eq:Kato_admissible}
    \frac{2}{p}=\frac{1}{2}-\frac{1}{q}\quad\text{and}\quad \alpha=\frac{2}{q}-\frac{1}{p},\quad (p,q,\alpha)\in[4,\infty]\times[2,\infty]\times[-\frac{1}{4},1].
    \end{equation}
    Then
    \begin{align}
    &\Vert D_{x}^{\alpha}V(t)u_{0}\Vert_{L^{p}_{x}L^{q}_{t}}\lesssim\Vert u_{0}\Vert_{L^{2}_{x}},\label{est:Kato_1}\\
    &\Vert D_{x}^{\alpha}\int_{\mathbb{R}}V(-t')g(t',\cdot)dt'\Vert_{L^{2}_{x}}\lesssim\Vert g\Vert_{L^{p'}_{x}L^{q'}_{t}},\label{est:Kato_2}\\
    &\Vert D_{x}^{2\alpha}\int_{0}^{t}V(t-t')g(t',\cdot)dt'\Vert_{L^{p}_{x}L^{q}_{t}}\lesssim\Vert g\Vert_{L^{p'}_{x}L^{q'}_{t}}.\label{est:Kato_3}
    \end{align}
\end{lemma}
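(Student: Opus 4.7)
The plan is to establish the homogeneous estimate \eqref{est:Kato_1} at the two endpoints of the admissible curve, interpolate between them to fill the whole range, and then derive \eqref{est:Kato_2} by duality and \eqref{est:Kato_3} by composing the two with a Christ--Kiselev step to restore the truncation $\int_0^t$. The relations $\frac{2}{p}=\frac{1}{2}-\frac{1}{q}$ and $\alpha=\frac{2}{q}-\frac{1}{p}$ single out two endpoints: $(p,q,\alpha)=(\infty,2,1)$, the Kato local smoothing estimate, and $(p,q,\alpha)=(4,\infty,-\tfrac14)$, the Kenig--Ruiz maximal function estimate.

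For the Kato endpoint I would write $V(t)u_0(x)=\int_{\R} e^{i(x\xi+t\xi^3)}\widehat u_0(\xi)\,d\xi$, change variables $\eta=\xi^3$ so that $d\xi=\tfrac13 \eta^{-2/3}d\eta$, and apply Plancherel in $t$ to obtain
\[
\|\partial_x V(t)u_0\|_{L^2_t}^2 = c\int_{\R} |\widehat u_0(\eta^{1/3})|^2 \eta^{-2/3}\,d\eta = c\|u_0\|_{L^2_x}^2,
\]
uniformly in $x$. For the Kenig--Ruiz endpoint I would apply the $TT^{\ast}$ method to $Tf=D_x^{-1/4}V(t)f$; the relevant kernel
\[
K(x-y,t-s)=\int_{\R} e^{i((x-y)\xi+(t-s)\xi^3)}|\xi|^{-1/2}\,d\xi
\]
obeys, after the scaling $\xi\mapsto|t-s|^{-1/3}\xi$, the uniform bound $|K(x-y,t-s)|\lesssim|x-y|^{-1/2}$, coming from a stationary phase / Van der Corput analysis (Lemma \ref{lemma:van der Corput}) carried out in the spirit of the oscillatory integrals $I^{b,\alpha}$ advertised in Remark \ref{remark:oscillatory_integrals} with $b=3$. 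Placing $L^\infty_t$ inside the kernel and applying Hardy--Littlewood--Sobolev in $x$ then gives $\|TT^{\ast}g\|_{L^4_xL^\infty_t}\lesssim\|g\|_{L^{4/3}_xL^1_t}$, and duality yields the Kenig--Ruiz bound.

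Complex (Stein) interpolation along the admissible curve, with the analytic family $T_z=D_x^{\alpha(z)}V(t)$ in which both $(1/p,1/q)$ and $\alpha$ vary linearly, delivers \eqref{est:Kato_1} throughout the range. Estimate \eqref{est:Kato_2} is the dual of \eqref{est:Kato_1}, obtained by pairing with a test function and invoking Plancherel. Composing \eqref{est:Kato_1} and \eqref{est:Kato_2} produces the non-retarded inhomogeneous bound with $2\alpha$ derivatives on the left (one derivative absorbed on each factor), and the retardation is restored by the Christ--Kiselev lemma, applicable since $q'<2\leq q$.

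The main obstacle is the uniform kernel bound $|K_1(y)|\lesssim|y|^{-1/2}$ for $K_1(y)=\int_{\R} e^{i(y\xi+\xi^3)}|\xi|^{-1/2}d\xi$: the phase has stationary points $\xi_0\sim|y|^{1/2}$ only when $y<0$, where $|\phi''(\xi_0)|\sim|y|^{1/2}$ yields the stationary-phase contribution $|\xi_0|^{-1/2}|\phi''(\xi_0)|^{-1/2}\sim|y|^{-1/2}$, while the stationary-free region requires a dyadic decomposition combined with Van der Corput of order two or three, after which one must patch the regimes together without losing the sharp $|y|^{-1/2}$ decay uniformly in $y$. Once this oscillatory integral bound is in place, the rest reduces to standard Strichartz-style interpolation and duality.
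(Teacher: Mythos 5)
The paper does not prove Lemma~\ref{kato_estimates}; it simply cites Section~3 of \cite{Kenig_gKdV_1993}. Your blind reconstruction follows the same standard route used there: the Kato local smoothing endpoint $(p,q,\alpha)=(\infty,2,1)$ via the change of variables $\eta=\xi^3$ and Plancherel in $t$; the maximal function endpoint $(4,\infty,-\tfrac14)$ via $TT^*$, the kernel bound $|K_1(y)|\lesssim(1+|y|)^{-1/2}$ (which is exactly the case $b=3$, $\alpha=-\tfrac12$ of Proposition~\ref{propIalphab}), and Hardy--Littlewood--Sobolev; Stein interpolation to fill the admissible curve; duality for \eqref{est:Kato_2}; and composition plus Christ--Kiselev for \eqref{est:Kato_3}. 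The two endpoint computations you sketch are correct, including the scaling $\xi\mapsto|t-s|^{-1/3}\xi$ that makes the kernel bound uniform in the time variable.

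There is, however, a genuine gap in the last step. The Christ--Kiselev lemma requires the strict inequality $q'<q$, and on the Kato curve this fails precisely at the endpoint $q=2$ (where $\alpha=1$, $p=\infty$, and $q'=q=2$); your assertion that ``$q'<2\leq q$'' is false there. Consequently, composition plus Christ--Kiselev does \emph{not} deliver the retarded estimate
\begin{equation*}
\Big\Vert \partial_x^2 \int_0^t V(t-t')g(t')\,dt'\Big\Vert_{L^\infty_x L^2_t}\lesssim \Vert g\Vert_{L^1_x L^2_t},
\end{equation*}
which is exactly the double-smoothing bound invoked repeatedly in Sections~\ref{section:well-posedness} and \ref{section:scat_in_L2} (e.g.\ in \eqref{der2v}). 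In Kenig--Ponce--Vega this endpoint is handled by a separate, direct Fourier-side computation rather than by a Christ--Kiselev step; you would need to supply that argument and then interpolate the retarded family with the endpoint added by hand. A secondary technical point worth flagging: the truncation $\int_0^t$ in \eqref{est:Kato_3} lives in the \emph{inner} exponent of the mixed norm $L^p_xL^q_t$, so one must invoke a mixed-norm form of Christ--Kiselev rather than the scalar version; this is available but should be stated explicitly.
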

\begin{proposition}[\cite{Dodson_gKdV}]\label{Kato_est_most_general}
    Let $u$ be the solution of the inhomogeneous initial value problem \eqref{eq:inhom_airy} for $t\in I$ where $I$ is an interval with $0\in I$. Then for any (Kato) admissible triples $(p_{1},q_{1},\alpha_{1})$ and $(p_{2},q_{2},\alpha_{2})$, satisfying \eqref{eq:Kato_admissible}, we have
    \begin{equation}\label{Kato_estimate_Dodson}
        \Vert D_{x}^{\alpha_{1}}u\Vert_{L^{p_{1}}_{x}L^{q_{1}}_{t}(I)}\lesssim\Vert u_{0}\Vert_{L^{2}_{x}}+\Vert D^{-\alpha_{2}}_{x}g\Vert_{L^{p_{2}'}_{x}L^{q_{2}'}_{t}(I)}.
    \end{equation}
\end{proposition}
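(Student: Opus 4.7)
The plan is to apply Duhamel's formula to decompose
\[
u(t)=V(t)u_{0}+\int_{0}^{t}V(t-t')g(t')\,dt',
\]
and bound the two pieces separately. The linear piece is controlled immediately by the homogeneous Kato estimate \eqref{est:Kato_1} applied with the triple $(p_{1},q_{1},\alpha_{1})$, followed by monotonicity of the mixed norm in the time interval $I\subset\R$. The Duhamel term requires more work; the plan is to first prove its unrestricted (full-line) analogue and then transfer the resulting bound to the half-line truncation via the Christ-Kiselev lemma.

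For the unrestricted estimate
\[
\Big\|D_{x}^{\alpha_{1}}\!\int_{\R}V(t-t')g(t')\,dt'\Big\|_{L^{p_{1}}_{x}L^{q_{1}}_{t}(\R)}\;\lesssim\;\|D_{x}^{-\alpha_{2}}g\|_{L^{p_{2}'}_{x}L^{q_{2}'}_{t}(\R)},
\]
I would use the group property $V(t-t')=V(t)V(-t')$ to factor
\[
\int_{\R}V(t-t')g(t')\,dt'=V(t)\,h,\qquad h:=\int_{\R}V(-t')g(t')\,dt'.
\]
Applying \eqref{est:Kato_1} with $(p_{1},q_{1},\alpha_{1})$ reduces the task to bounding $\|h\|_{L^{2}_{x}}$; inserting the identity $g=D_{x}^{\alpha_{2}}(D_{x}^{-\alpha_{2}}g)$ and invoking the dual estimate \eqref{est:Kato_2} with $(p_{2},q_{2},\alpha_{2})$ then yields $\|h\|_{L^{2}_{x}}\lesssim \|D_{x}^{-\alpha_{2}}g\|_{L^{p_{2}'}_{x}L^{q_{2}'}_{t}}$, closing the estimate. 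This is the standard $TT^{\ast}$ chain that combines a Kato-Strichartz inequality with its adjoint.

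The main obstacle will be the passage from this unrestricted bound to the retarded Duhamel integral on $[0,t]$. The Christ-Kiselev lemma accomplishes this provided the strict inequality $q_{2}'<q_{1}$ holds on the inner time exponents. Since Kato admissibility forces $q_{1},q_{2}\in[2,\infty]$, and hence $q_{2}'\leq 2\leq q_{1}$, the only degenerate configuration is $q_{1}=q_{2}'=2$, i.e.\ $p_{1}=p_{2}'=\infty$ and $\alpha_{1}=\alpha_{2}=1$. At this isolated endpoint the retarded estimate does not follow from a black-box invocation of Christ-Kiselev and must be argued directly: for example, one exploits the sharp Kato smoothing identity $\|\partial_{x}V(t)u_{0}\|_{L^{\infty}_{x}L^{2}_{t}}\sim \|u_{0}\|_{L^{2}_{x}}$ to perform a self-adjoint $TT^{\ast}$ computation on the corresponding kernel, in which the retarded and full-line estimates agree up to a constant. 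Restricting the resulting global-in-time bound to the given interval $I\ni 0$ then completes the proof.
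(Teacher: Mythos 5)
The paper does not prove this proposition; it is taken from Dodson's gKdV paper and cited as such, so there is no in-paper proof to compare against. Your outlined strategy --- Duhamel split, then a $TT^{*}$ chain combining \eqref{est:Kato_1} with its dual \eqref{est:Kato_2} for the unrestricted Duhamel integral, then Christ--Kiselev to pass to the retarded operator --- is the standard template for inhomogeneous Strichartz/Kato estimates, and the $TT^{*}$ computation for the full-line operator is correctly set up.

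The soft spot is the Christ--Kiselev step. In its standard form the lemma transfers boundedness of a kernel operator between spaces of the form $L^{a}_{t}(B_{1})\to L^{b}_{t}(B_{2})$ with $a<b$, i.e., with the time exponent sitting in the \emph{outer} slot. The Kato norms here are $L^{p}_{x}L^{q}_{t}$ with time on the \emph{inside}, and $L^{p_{2}'}_{x}L^{q_{2}'}_{t}$ is not of the form $L^{q_{2}'}_{t}(B)$ unless $p_{2}'=q_{2}'$. You appeal to the lemma ``on the inner time exponents'' as if a mixed-norm variant were immediate, but you neither identify which variant you are using nor verify its hypotheses for Kato admissibility. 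A version adapted to this norm structure does exist and the exponent constraint $q_{2}'<q_{1}$ is the right one, so the gap is likely bridgeable --- but as written the crucial transfer step is asserted, not proved. The cleaner and more standard route in the gKdV literature (Kenig--Ponce--Vega \cite{Kenig_gKdV_1993}, Dodson \cite{Dodson_gKdV}) is to prove the retarded estimate \eqref{est:Kato_3} directly at the endpoints $(p,q,\alpha)=(\infty,2,1)$ (Plancherel in $t$) and $(4,\infty,-1/4)$ (maximal function estimates), interpolate along the Kato line, and then mix the two triples by duality; this avoids Christ--Kiselev entirely.

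Two smaller corrections. When $q_{1}=q_{2}'=2$ one has $p_{2}'=1$, not $\infty$ as written: the degenerate endpoint pairs $L^{\infty}_{x}L^{2}_{t}$ against $L^{1}_{x}L^{2}_{t}$. And you need not redo any self-adjoint kernel computation there --- that retarded estimate is precisely \eqref{est:Kato_3} with $(p,q,\alpha)=(\infty,2,1)$, already recorded in Lemma \ref{kato_estimates}, so one may simply cite it.
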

\begin{lemma}[Lemma 2.9 in \cite{Farah_H1_scattering}]\label{lemma:fractional_leibniz}
    Let $0<\alpha<1$ and $p,p_1,p_2,q,q_1,q_2\in(1,\infty)$ with $\frac{1}{p}=\frac{1}{p_1}+\frac{1}{p_2}$ and $\frac{1}{p}=\frac{1}{p_1}+\frac{1}{p_2}$. Then
    \begin{enumerate}[(i)]
        \item $\norm{D_{x}^{\alpha}(fg)-fD^{\alpha}_{x}g-gD^{\alpha}_{x} f}_{L^{p}_x L^{q}_t}\lesssim\norm{D^{\alpha}_x f}_{L^{p_1}_x L^{q_1}_t}\norm{g}_{L^{p_2}_x L^{q_2}_t}.$\label{fractional_leibniz_1}

        \noindent
        The same still holds if $p=1$ and $q=2$.
        \item $\norm{D^{\alpha}_x F(f)}_{L^{p}_x L^{q}_t}\lesssim \norm{D^{\alpha}_x f}_{L^{p_1}_x L^{q_1}_t}\norm{F'(f)}_{L^{p_2}_x L^{q_2}_t}.$ \label{fractional_leibniz_2}
    \end{enumerate}
\end{lemma}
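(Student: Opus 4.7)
The plan is to prove both assertions by passing through Littlewood–Paley theory and exploiting the pointwise kernel representation of $D^\alpha$ for $0<\alpha<1$. Since the result is stated with a mixed norm $L^p_x L^q_t$, I would treat the time variable as a parameter, prove the fixed-time estimates by space-only techniques, and only at the end integrate in $t$ via Minkowski/Fefferman–Stein vector-valued maximal inequalities to transfer the spatial pointwise bounds to the mixed-norm setting.

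For part (i), the commutator $[D^\alpha,f]g$ minus $gD^\alpha f$, I would use Bony's paraproduct decomposition. Writing $fg = T_f g + T_g f + R(f,g)$ with $T_f g := \sum_j S_{j-2}(f)\,\Delta_j g$ the paraproduct and $R$ the resonant part, one checks that $D^\alpha(T_f g) - f D^\alpha g$ is an acceptable remainder because $D^\alpha$ applied to a function frequency-localized at $2^j$ is, up to a Fourier multiplier with bounded symbol (after normalizing by $2^{j\alpha}$), essentially $2^{j\alpha}\Delta_j$; the low-frequency factor $S_{j-2}f$ can be pulled outside at a cost controlled by Coifman–Meyer commutator estimates. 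Symmetrically, $D^\alpha(T_g f)-g D^\alpha f$ is handled the same way. The resonant piece $R(f,g)$, where the frequencies are comparable, is the easiest: summation over $|j-k|\leq 1$ together with Bernstein's inequality and the Fefferman–Stein maximal inequality gives the desired bound with a gain of the $\alpha$ derivatives. Finally, Hölder in time with exponents $q_1,q_2$ yields the mixed-norm estimate. The endpoint $p=1, q=2$ is recovered by the same decomposition using the $L^1$ boundedness of the square function on frequency-localized pieces and Minkowski's inequality to exchange $\ell^2_j$ and $L^2_t$.

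For part (ii), the fractional chain rule, I would use the singular-integral representation, valid for $0<\alpha<1$,
\begin{equation*}
D^\alpha u(x) \;=\; c_\alpha \int_{\mathbb{R}} \frac{u(x)-u(y)}{|x-y|^{1+\alpha}}\,dy,
\end{equation*}
combined with the elementary identity $F(f(x))-F(f(y)) = (f(x)-f(y))\int_0^1 F'\bigl(f(y)+\tau(f(x)-f(y))\bigr)\,d\tau$. Substituting and moving the absolute value inside gives a pointwise bound
\begin{equation*}
|D^\alpha F(f)(x)| \;\lesssim\; \bigl(M(F'\circ f)(x) + |F'(f(x))|\bigr)\cdot D^\alpha f(x) \;+\; \text{lower order},
\end{equation*}
after splitting the $y$-integral by $|x-y|\lessgtr 1$ and bounding the averaged $F'$ factor by the Hardy–Littlewood maximal function. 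Hölder in space with exponents $(p_1,p_2)$ and the boundedness of $M$ on $L^{p_2}_x$ (for $p_2>1$) conclude the fixed-time estimate, and Hölder in $t$ upgrades it to the mixed-norm form.

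The main obstacle, and the step I would expect to consume most of the effort, is the high–high (resonant) case of part (i): there $D^\alpha$ acts on a piece whose frequency is much smaller than those of both factors, so the pointwise cancellation that makes high–low terms tractable is absent, and one must extract the needed $\alpha$ derivatives from the output square function. This is precisely where the restriction $p,q>1$ enters (to use Fefferman–Stein), and the $p=1,q=2$ endpoint survives only because one can exchange $\ell^2_j$ with $L^2_t$ via Minkowski before applying Littlewood–Paley. A secondary technical point is verifying that the paraproduct remainders behave well when $g$ is not differentiated at all; this forces a careful bookkeeping to ensure that no $D^\alpha$ is unnecessarily thrown onto $g$, which is the whole point of the commutator structure of the statement.
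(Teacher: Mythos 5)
The paper does not prove this lemma: it is quoted verbatim (as Lemma~2.9 of \cite{Farah_H1_scattering}) and used as a black box, so there is no internal proof to measure your argument against. That said, your proposal contains a genuine error in part~(ii), and the sketch of part~(i) is too vague to certify.

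For part~(ii), the pointwise inequality you assert,
\begin{equation*}
|D^\alpha F(f)(x)| \lesssim \bigl(M(F'\circ f)(x) + |F'(f(x))|\bigr)\, D^\alpha f(x) + \text{lower order},
\end{equation*}
is false, and this sinks the whole singular-integral strategy as you describe it. The problem is already visible after taking absolute values inside the kernel: you obtain $\int \frac{|f(x)-f(y)|}{|x-y|^{1+\alpha}}\,dy$, which is a Gagliardo-type quantity and is \emph{not} $D^\alpha f(x)$, nor is it pointwise dominated by it; the two are only comparable in $L^p$-norm after a square-function argument. A concrete contradiction: take $f$ a nonnegative Schwartz bump supported in $[-1,1]$ with $\int f \neq 0$ and $F(u) = u^2$, so $F'(u)=2u$. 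Then both $D^\alpha f(x)$ and $D^\alpha(f^2)(x)$ decay like $|x|^{-1-\alpha}$ as $|x|\to\infty$ (the $|\xi|^\alpha$ singularity at the origin of the symbol governs the tail), while $M(F'\circ f)(x)\sim |x|^{-1}$ and $F'(f(x))=0$ for $|x|>1$. Thus the right-hand side of your claimed bound decays like $|x|^{-2-\alpha}$, strictly faster than the left-hand side's $|x|^{-1-\alpha}$, unless ``lower order'' secretly carries the entire estimate. The correct proofs (Christ--Weinstein and Kenig--Ponce--Vega) are Littlewood--Paley proofs: they write $\Delta_j F(f)$ using the mean-zero kernel of $\Delta_j$, telescope $f$ across dyadic scales, and sum using the Fefferman--Stein vector-valued maximal inequality; there is no clean pointwise chain rule that bypasses this.

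For part~(i), the Bony paraproduct outline is directionally right and is how these Kato--Ponce-type commutator bounds are proved in the literature, but as written it elides the step where the gain of $\alpha$ derivatives actually materializes. In the low--high pieces $T_f g$ and $T_g f$, the cancellation with $fD^\alpha g$ and $gD^\alpha f$ comes from a first-order Taylor expansion of the symbol $|\xi+\eta|^\alpha$ in the low frequency, and one needs the mean-value structure to extract a full factor of $2^{j\alpha}$ onto the $f$-side; invoking ``Coifman--Meyer'' without carrying this out is not a proof. Your remark about the resonant piece being ``the easiest'' is also backwards for the endpoint $p=1$: the low-frequency output of $R(f,g)$ is exactly where the $L^1_x$ case is delicate, since you cannot use duality, and Minkowski with $\ell^2_j\leftrightarrow L^2_t$ alone does not supply the $L^1_x$ Littlewood--Paley square-function bound (that direction of the square-function theorem fails at $p=1$; the endpoint works only because the commutator form kills the low-frequency output and the remaining pieces land in a Hardy-space-friendly configuration).
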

\section{Proof of Theorem \ref{thm:well-posedness}}\label{section:well-posedness}
In this section, we establish Theorem \ref{thm:well-posedness} in two separate subsections: we first establish the local existence theory by proving some required a priori estimates; and later in this section, we upgrade it to the global existence of the solutions. 
\subsection{Local well-posedness} \label{locwelres}
In the following section, we establish a priori estimates, which will be utilized later in this section in order to conclude the local existence theory.
\subsubsection{A priori estimates}\label{A priori estimates}
In this section, we intend to show that the norms defined in the following are finite for $0<T<\infty$:
\begin{equation}\label{spacetime_norms}    
    \begin{aligned}
    \beta_{1,T}(z) &= \mathbb{E} \left[ \sup\limits_{0 \leq t \leq T}\Vert D_{x}^{s_{k}} z(t) \Vert_{L^2_x}^{2} \right]=:\mathbb{E}[\alpha_{1,T}^2(z)], \\
    \beta_{2,T}(z) &= \mathbb{E} \left[ \Vert D_{x}^{1+s_{k}}z \Vert_{L^{\infty}_x L^2_t([0,T])}^{2} \right]=:\mathbb{E}[\alpha_{2,T}^2(z)], \\
    \beta_{3,T}(z) &= \mathbb{E} \left[ \Vert D_{x}^{s_{k}} z \Vert_{L^5_x L^{10}_t([0,T])}^2 \right]=:\mathbb{E}[\alpha_{3,T}^2(z)], \\ 
    \beta_{4,T}(z) &= \mathbb{E} \left[ \Vert D_x^{\frac{1}{10} - \frac{2}{5k}} D_t^{\frac{3}{10}-\frac{6}{5k}} z \Vert_{L^{p_k}_x L^{q_k}_t([0,T])}^2 \right]=:\mathbb{E}[\alpha_{4,T}^2(z)],
    \end{aligned}
\end{equation}    
where $s_k$ is as in \eqref{scalingcr}, and \begin{align}\label{pkqk}
 \frac{1}{p_k}=\frac{2}{5k}+\frac{1}{10}\quad\text{and}\quad\frac{1}{q_k}=\frac{3}{10}-\frac{4}{5k}\,\,\,\text{for}\,\,k\geq 4.  
\end{align} 
In fact, the quantities in \eqref{spacetime_norms} will be controlled by various norms of $\phi$ and $g$ (which we always denote by $C(\phi,g)$) multiplied by some powers of $T$. This will eventually imply an $\omega$ almost sure boundedness of $\alpha_{j,T}(z)$, $j\in\{1,...,4\}$. Notice that in the case $k=4$, we have $\beta_{3,T}= \beta_{4,T}$.

We begin by estimating $\beta_{1,T}$. Thus, using the $L^{2}_{x}$-isometry property of $V(t)$ and Burkholder-Davis-Gundy inequality \eqref{B-D-G_inequality}, we obtain
\begin{equation}\label{beta1T}
\begin{aligned}
    \beta_{1,T}(z)&\lesssim\mathbb{E}\left[\int_{0}^{T}\norm{V(-t')D_{x}^{s_k}\phi g(t')}_{L^{2}_{x}}^{2}dt'\right]\\
    \lesssim&\norm{D_{x}^{s_k}\phi}_{L^{2}_{x}}^2\mathbb{E}\left[\norm{g}_{L^{2}_{t}([0,T])}^2\right]\\
    \lesssim&\norm{D_{x}^{s_k}\phi}_{L^{2}_{x}}^2\norm{g}_{L^{\infty}_{\omega,t}([0,T])}^2 T\leq C(\phi,g)T.
\end{aligned}
\end{equation}

With regard to $\beta_{2,T}$, we will implement an interpolation argument as in the proof of \cite[Lemma 2.4]{Millet_SKdV}, and assume that $2\leq q\leq p<\infty$ and $\alpha\geq 0$. Using Hölder inequality in the $t$-variable, the Burkholder-Davis-Gundy inequality \eqref{B-D-G_inequality}, and the Fourier inversion formula, we obtain the bound:
\begin{equation}\label{dalphaz}
\begin{aligned}
    \norm{D_{x}^{\alpha}z}_{L^{\infty}_{x}L^{p}_{\omega}L^{q}_{t}([0,T])}^{p}\leq&2 T^{\frac{p}{q}-1}\sup\limits_{x\in\mathbb{R}}\mathbb{E}\Big[\int_{0}^{T}\Big\vert\int_{0}^{t}V(t-t')D_{x}^{\alpha}\phi(x)g(t')dB(t')\Big\vert^{p}dt\Big]\\
    \lesssim&T^{\frac{p}{q}-1}\sup\limits_{x\in\mathbb{R}}\mathbb{E}\Big[\int_{0}^{T}\Big(\int_{0}^{t}\vert V(t-t')D_{x}^{\alpha}\phi(x)\vert^{2}g^{2}(t')dt'\Big)^{p/2}dt\Big]\\
    \lesssim&T^{\frac{p}{q}}\sup\limits_{x\in\mathbb{R}}\mathbb{E}\Big[\Big(\int_{0}^{T}g^{2}(t')\Big\vert\int_{\mathbb{R}}e^{ix\xi-i(t-t')\xi^{3}}\vert\xi\vert^{\alpha}\widehat{\phi}(\xi)d\xi\Big\vert^{2} dt'\Big)^{p/2}\Big]\\
    \lesssim&T^{\frac{p}{q}}\mathbb{E}\left[\norm{g}_{L^{2}_{t}([0,T])}^{p}\right]\norm{\phi}_{H^{\frac{1}{2}+\alpha+}_{x}}\\
    \lesssim& T^{\frac{p}{q}+\frac{p}{2}}\norm{\phi}_{H^{\frac{1}{2}+\alpha+}_{x}}\norm{g}_{L^{\infty}_{\omega,t}([0,T])}^{p}\leq C(\phi,g)T^{\frac{p}{q}+\frac{p}{2}},
\end{aligned}
\end{equation}
which yields 
\begin{equation}\label{first_interpolation_norm_beta_2}
\norm{D^{\alpha}_{x}z}_{L^{\infty}_{x}L^{p}_{\omega}L^{2}_{t}([0,T])}\leq C(\phi,g) T  \quad \text{for}\,\, p\geq 2.
\end{equation} 
We also need to estimate the size of $\norm{z}_{L^{2}_{x}L^{p}_{\omega}L^{2}_{t}([0,T])}$. Therefore, application of the Hölder inequality in $t$, the Burkholder-Davis-Gundy inequality \eqref{B-D-G_inequality}, and Young's convolution inequality gives rise to the following:
\begin{equation}\label{process_1}
\begin{aligned}
    \norm{z}_{L^{2}_{x}L^{p}_{\omega}L^{2}_{t}([0,T])}^{2}&
    \leq T^{1-\frac{2}{p}}\int_{\mathbb{R}}\Big(\mathbb{E}\Big[\int_{0}^{T}\Big\vert\int_{0}^{t}V(t-t')\phi(x)g(t')dB(t')\Big\vert^{p} dt\Big]\Big)^{2/p}dx\\
    \lesssim&T^{1-\frac{2}{p}}\int_{\mathbb{R}}\Big(\mathbb{E}\Big[\int_{0}^{T}\Big(\int_{0}^{t}\vert V(t-t')\phi(x)g(t')\vert^{2}dt'\Big)^{p/2} dt\Big]\Big)^{2/p}dx\\
    \lesssim& T\int_{\mathbb{R}}\Big(\mathbb{E}\Big[\Big(\int_{0}^{T}\vert V(t-t')\phi(x)g(t')\vert^{2}dt'\Big)^{p/2}\Big]\Big)^{2/p}dx\\
    \lesssim&T\int_{\mathbb{R}}\Big(\mathbb{E}\Big[\norm{\vert V(\cdot)\phi(x)\vert^{2}*g^{2}(\cdot)}_{L^{\infty}_{t}([0,T])}^{p/2}\Big]\Big)^{2/p}dx\\
    \lesssim& T\,\mathbb{E}\left[\norm{g}_{L^{2}_{t}([0,T])}^{p}\right]^{\frac{2}{p}}\int_{\mathbb{R}}\sup\limits_{t\in[0,T]}\vert V(t)\phi(x)\vert^{2} dx\\
    \lesssim& T^{2}\norm{g}_{L^{\infty}_{\omega,t}([0,T])}\int_{\mathbb{R}}\sup\limits_{t\in[0,T]}\vert V(t)\phi(x)\vert^{2} dx.
\end{aligned}
\end{equation}
The Sobolev embedding in time and the identity $\partial_{t}V(t)\phi(x)=V(t)\phi_{xxx}(x)$ lead to
\begin{equation*}
\norm{V(t)\phi(x)}_{L^{2}_{x}L^{\infty}_{t}([0,T])}^{2}\lesssim\norm{V(t)(1+\partial_{x}^{3})\phi}_{L^{2}_{t}L^{2}_{x}([0,T])}^{2}\leq C(\phi)T.
\end{equation*}
Putting the above inequality in \eqref{process_1} gives that
\begin{equation}\label{second_interpolation_norm_beta_2}
    \norm{z}_{L^{2}_{x}L^{p}_{\omega}L^{2}_{t}([0,T])}\leq C(\phi,g) T^{\frac{3}{2}}.
\end{equation}
Now choosing $\alpha>1+s_k$ and interpolating \eqref{first_interpolation_norm_beta_2} and \eqref{second_interpolation_norm_beta_2} (see \cite[Proposition A1]{deBouard_SKdV_1}), we obtain
\begin{equation*}
    \norm{D^{\alpha\theta}_{x}z}_{L^{p}_{\omega}L^{p}_{x}L^{2}_{t}([0,T])}\leq C(\phi,g)T^{\frac{3}{2}-\frac{\theta}{2}}
\end{equation*}
with $\theta=1-\frac{2}{p}$. By embedding $W^{\frac{1}{p}+,p}_{x}\hookrightarrow L^{\infty}_{x}$, we thus have (with $\theta=1-\frac{2}{p}$)
\begin{equation}\label{dalpfha}
    \norm{D^{\alpha\theta-\frac{1}{p}-}z}_{L^{p}_{\omega}L^{\infty}_{x}L^{2}_{t}([0,T])}\lesssim \norm{D^{\alpha\theta}_{x}z}_{L^{p}_{\omega}L^{p}_{x}L^{2}_{t}([0,T])}\lesssim C(\phi,g)T^{1+\frac{1}{p}}.
\end{equation}
We require $\alpha\theta-\frac{1}{p}-=1+s_{k}$, which is equivalent to having $\alpha=\frac{p(3k-4)+2k}{2k(p-2)}+$. Therefore, for $2<p<\infty$ and $\alpha=\frac{p(3k-4)+2k}{2k(p-2)}+$, the estimate \eqref{dalpfha} implies that
\begin{equation*}
    \beta_{2,T}(z)\leq\norm{D^{1+s_k}_{x}z}_{L^{p}_{\omega}L^{\infty}_{x}L^{2}_{t}([0,T])}^2\leq C(\phi,g)T^{2+\frac{2}{p}}.
\end{equation*}
Thus, setting $p=\infty-$, we get
\begin{equation}\label{beta2T}
    \beta_{2,T}(z)\leq C(\phi,g)T^{2+}.
\end{equation}

Next, we deal with $\beta_{3,T}$. Using Hölder inequality in $\omega$, Burkholder-Davis-Gundy inequality \eqref{B-D-G_inequality}, and Young's convolution inequality, we obtain (with $2\leq p\leq q<\infty$ and $\alpha\in\mathbb{R}$) the following:
\begin{equation}\label{process_2}
\begin{aligned}
    \mathbb{E}\left[\norm{D^{\alpha}_{x}z}^{p}_{L^{p}_{x}L^{q}_{t}([0,T])}\right]\leq&\norm{\left(\mathbb{E}\left[\Big\vert\int_{0}^{t}V(t-t')D^{\alpha}_{x}\phi(x)g(t')dB(t')\Big\vert^{q}\right]\right)^{1/q}}_{L^{p}_{x}L^{q}_{t}([0,T])}^{p}\\
    \lesssim&\int_{\mathbb{R}}\left(\int_{0}^{T}\mathbb{E}\left[\left(\int_{0}^{t}\vert V(t-t')D^{\alpha}_{x}\phi(x)g(t')\vert^{2}dt'\right)^{q/2}\right] dt\right)^{p/q}dx\\
    \lesssim&\int_{\mathbb{R}}\left(\mathbb{E}\left[\int_{0}^{T}\left(\vert V(\cdot)D^{\alpha}_{x}\phi(x)\vert^{2}*\vert g(\cdot)\vert^{2}\right)^{q/2}(t)dt\right]\right)^{p/q}dx\\
    \lesssim&\int_{\mathbb{R}}\left(\mathbb{E}\left[\norm{g}_{L^{2}_{t}([0,T])}^{q}\norm{D^{\alpha}_{x}V(t)\phi(x)}_{L^{q}_{t}([0,T])}^{q}\right]\right)^{p/q}dx\\
    \leq& T^{\frac{p}{2}}C(g)\norm{D^{\alpha}_{x}V(t)\phi(x)}_{L^{p}_{x}L^{q}_{t}([0,T])}^{p}.
\end{aligned}
\end{equation}
Setting $(p,q,\alpha)=(5,10,s_k)$ in \eqref{process_2}, and then applying the Kato estimate \eqref{est:Kato_1}, we get to the bound
\begin{equation*}
    \mathbb{E}\left[\norm{D^{s_k}_{x}z}^{5}_{L^{5}_{x}L^{10}_{t}([0,T])}\right]\leq T^{\frac{5}{2}} C(g)\norm{D_{x}^{s_k}\phi}_{L^{2}_{x}}^{5},
\end{equation*}
which yields the estimate 
\begin{align}\label{beta3T}
    \beta_{3,T}(z)\leq C(\phi,g)T.
\end{align}

Lastly, for $\beta_{4,T}$, we first note that $p_k\leq q_k$ if and only if $k\leq 6$ (where $p_k$ and $q_k$ are defined as in \eqref{pkqk}). Hence, when $k\leq 6$, the estimate \eqref{process_2} implies at once that
\begin{equation}\label{first_bound_for_beta_4}
    \beta_{4,T}(z)\leq C(\phi,g)T.
\end{equation}
In the case $k>6$, let $\alpha\in\mathbb{R}^+$, which is to be chosen later. We will make use of the interpolation argument, for this purpose, first recalling from \eqref{dalphaz} (as $k>6$, in view of the remark above, we consider $q<p$, which is in line with the assumption on $p$, $q$ made before \eqref{dalphaz}) that
\begin{equation}\label{interpolation_1}
    \norm{D^{\alpha}_{x}z}_{L^{\infty}_{x}L^{p}_{\omega}L^{q}_{t}([0,T])}\leq C(\phi,g)T^{\frac{1}{q}+\frac{1}{2}}.
\end{equation}
A similar calculation as in \eqref{process_1} can be performed (with the time integrability exponent $q\geq 2$ instead) to obtain
\begin{equation}\label{process_3}
\norm{z}_{L^{2}_{x}L^{p}_{\omega}L^{q}_{t}([0,T])}^{2}
    \leq T^{\frac{2}{q}+1}C(g)\int_{\mathbb{R}}\sup\limits_{0\leq t\leq T}\vert V(t)\phi(x)\vert^{2}dx.
\end{equation}
Thus, using \eqref{process_3}, the Sobolev embedding in $t$, $H^{1}_{t}\hookrightarrow H^{\frac{1}{2}+}_{t}\hookrightarrow L^{\infty}_{t}$, and $\partial_{t}V(t)\phi(x)\sim V(t)\partial_{x}^{3}\phi(x)$, we have the following
\begin{equation}\label{interpolation_2}
    \begin{aligned}
\norm{z}_{L^{2}_{x}L^{p}_{\omega}L^{q}_{t}([0,T])}\leq& C(g)T^{\frac{1}{q}+\frac{1}{2}}\norm{V(t)\phi(x)}_{L^{2}_{t}H^{3}_{x}([0,T])}\\
    \leq&C(\phi,g)T^{\frac{1}{q}+1}.
\end{aligned}
\end{equation}
As a result, interpolating \eqref{interpolation_1} and \eqref{interpolation_2} (by a similar argument as in \cite[Proposition A1]{deBouard_SKdV_1} with the parameter $\theta=1-\frac{2}{p}$, we arrive at
\begin{equation*}
   \norm{D^{\alpha(1-\frac{2}{p})}_{x}z}_{L^{p}_{\omega,x}L^{q}_{t}([0,T])}\leq C(\phi,g)T^{\frac{1}{2}+\frac{1}{q}+\frac{1}{p}}. 
\end{equation*}
Finally, choosing $p=p_k$ and $q=q_k$, and $\alpha=\frac{5(k-4)}{4(k-1)}$, we achieve the following:
\begin{equation}\label{second_bound_for_beta_4}
    \beta_{4,T}(z)\leq C(\phi,g)T^{\frac{9}{5}-\frac{4}{5k}}.
\end{equation}
Combining \eqref{first_bound_for_beta_4} and \eqref{second_bound_for_beta_4}, for $k\geq 4$, we have
\begin{equation}\label{boundforbeta4}
    \beta_{4,T}(z)\leq C(\phi,g)(T^{\frac{9}{5}-\frac{4}{5k}}+T).
\end{equation}
Therefore, from \eqref{beta1T}, \eqref{beta2T}, \eqref{beta3T}, and \eqref{boundforbeta4}, we obtain
\begin{equation}\label{bound_for_z(t)_with_expectation}
    \max_{1\leq j \leq 4}\beta_{j,T}(z)\leq C(\phi,g)(T+T^{2+}),
\end{equation}
which further shows (in view of Doob's martingale convergence theorems) that
\begin{equation}\label{almost_sure_bound_stochastic_convolution}
    \max_{1\leq j \leq 4}\alpha_{j,T}(z)<\infty\quad\text{$\omega$-a.s. for $0<T<\infty$.}
\end{equation}

\subsubsection{Fixed point argument} 
In this part, we will show the fixed point argument only in the critical regularity for simplicity, with minor modifications a similar argument follows in the case $k>4$ as well. To begin with, in the separate cases $k=4$ and $k>4$, we give precise bounds to the terms $\alpha_{j,T}(u)$, $j=1,2,3,4$, where $\alpha_{j,T}(\cdot)$ is defined in \eqref{spacetime_norms}, and $u$ satisfies the Duhamel formula \eqref{Duhamel_of_stochastic_solution}. While controlling $\alpha_{j,T}(u)$ norms, \eqref{almost_sure_bound_stochastic_convolution} provides us with pathwise finiteness for the stochastic convolution in the Duhamel formula \eqref{Duhamel_of_stochastic_solution}. We start with $\alpha_{1,T}(u)$. By Lemma \ref{lemma:strichartz}, Lemma \ref{kato_estimates}, Sobolev embedding (in $x$ and $t$), and \eqref{fractional_leibniz_2} of Lemma \ref{lemma:fractional_leibniz} together with the Hölder inequality applied for the cases $k>4$ and $k=4$ respectively, we obtain the bound:
\begin{equation}\label{alpha1u}
\begin{aligned}
   \alpha_{1,T}(u)\lesssim&\norm{D_{x}^{s_k}u_0}_{L^{2}_x}+\norm{D_{x}^{s_k}(u^{k+1})}_{L^{1}_{x}L^{2}_{t}([0,T])}+\alpha_{1,T}(z)\\
    \lesssim&\begin{cases}
              \norm{u_0}_{L^{2}_x}+\norm{u}_{L^{5}_{x}L^{10}_{t}([0,T])}^{5}+\alpha_{1,T}(z), &\text{if}\,\,k=4,\\
              \norm{D_{x}^{s_k}u_0}_{L^{2}_x}+\alpha_{3,T}(u)(\alpha_{4,T}(u))^{k}+\alpha_{1,T}(z),  &\text{if}\,\,k>4.
         \end{cases}
\end{aligned}
\end{equation}
In a similar way, we have
\begin{equation}\label{alpha2u}
\begin{aligned}
    \alpha_{2,T}(u)\lesssim&\norm{D_{x}^{s_k}u_{0}}_{L^{2}_{x}}+\norm{D_{x}^{s_k}(u^{k+1})}_{L^{1}_{x}L^{2}_{t}([0,T])}+\alpha_{2,T}(z)\\
    \lesssim&\begin{cases}
            \norm{u_{0}}_{L^{2}_{x}}+\norm{u}_{L^{5}_{x}L^{10}_{t}([0,T])}^{5}+\alpha_{2,T}(z),  &\text{if}\,\,k=4,\\
            \norm{D_{x}^{s_k}u_{0}}_{L^{2}_{x}}+\norm{u_{x}}_{L^{5}_{x}L^{10}_{t}([0,T])}(\alpha_{4,T}(u))^{k}+\alpha_{2,T}(z),  &\text{if}\,\,k>4,
         \end{cases}
\end{aligned}
\end{equation}
and
\begin{equation}\label{alpha3u}
\begin{aligned}
   \alpha_{3,T}(u)\lesssim&
    \begin{cases}
    \norm{u_{0}}_{L^{2}_{x}}+\norm{u}_{L^{5}_{x}L^{10}_{t}([0,T])}^{5}+\alpha_{3,T}(z), &\text{if}\,\,k=4,\\
    \norm{D^{s_k}_{x}u_{0}}_{L^{2}_{x}}+\alpha_{3,T}(u)(\alpha_{4,T}(u))^{k}+\alpha_{3,T}(z),&\text{if}\,\,k>4.
    \end{cases}
\end{aligned}
\end{equation}
Recall that when $k=4$, we have $\alpha_{3,T}=\alpha_{4,T}$, therefore, it suffices, for $\alpha_{4,T}(u)$, to consider only the case $k>4$. In this respect, in light of the proof of \cite[Proposition 6.1]{Kenig_gKdV_1993}, we get
\begin{equation}\label{1,}
\begin{aligned}
\alpha_{4,T}(u)\lesssim&\norm{D_{x}^{s_k}u_{0}}_{L^{2}_{x}}+\norm{u_{x}u^{k}}_{L^{p'_{k}}_{x}L^{q'_k}_{t}([0,T])}+ \alpha_{4,T}(z)  \\
    \lesssim&\norm{D_{x}^{s_k}u_{0}}_{L^{2}_{x}}+\norm{u_{x}}_{L^{r}_{x}L^{l}_{t}([0,T])}\norm{u}_{L^{\frac{5k}{4}}_{x}L^{\frac{5k}{2}}_{t}([0,T])}^{k}+\alpha_{4,T}(z),
\end{aligned}
\end{equation}
where $\frac{1}{r}=\frac{1}{10}-\frac{2}{5k}$ and $\frac{1}{l}=\frac{3}{10}+\frac{4}{5k}$, $k>4$. To the middle term in \eqref{1,} applying Stein's complex interpolation (with the interpolation parameter $\theta=\frac{1}{2}+\frac{2}{k}$) and the Sobolev embedding $\dot{W}_x^{p_k,\frac{1}{10}-\frac{2}{5k}} \dot{W}_t^{q_k,\frac{3}{10}-\frac{6}{5k}}\hookrightarrow L^{\frac{5k}{4}}_{x}L^{\frac{5k}{2}}_{t}$, for $p_k$, $q_k$ as in \eqref{pkqk}, leads to the final form of the bound:
\begin{equation}\label{alpha4u}
\alpha_{4,T}(u) \lesssim \norm{D_{x}^{s_k}u_{0}}_{L^{2}_{x}}+(\alpha_{2,T}(u))^{\theta}(\alpha_{3,T}(u))^{1-\theta}(\alpha_{4,T}(u))^k+\alpha_{4,T}(z).
\end{equation}

We are now ready to apply the contraction argument to the operator (in the case $k=4$) \begin{equation}\label{contractionmapp}
    \Gamma_{u_0,z}(u(t))=V(t)u_{0}+\int_{0}^{t}V(t-t')\partial_{x}(u^{k+1})(t')dt'+z(t)
\end{equation}
on the ball $$B^{0}_{R,T}:=\{u:[0,T]\times\R\to\R |\,\norm{u}_{Y([0,T])}\leq R\}$$ where $z(t)$ is as in \eqref{stconv} and for $0<T<\infty$, we define $Y([0,T])$ as the solution space endowed with the norm
\begin{equation}\label{Y_space}
\norm{f}_{Y([0,T])}:=\max\{\Vert f \Vert_{C^{0}_{t}L^{2}_{x}([0,T])},\Vert \partial_{x}f \Vert_{C^{0}_{x} L^2_t([0,T])},\Vert f \Vert_{L^5_x L^{10}_t([0,T])}\},
\end{equation}
and $R=R(\omega)=2(c\norm{u_{0}}_{L^{2}_{x}}+\norm{z}_{Y([0,T])})$. Using estimates \eqref{alpha1u}--\eqref{alpha3u} and \eqref{alpha4u}, we deduce
\begin{align*}
    \norm{\Gamma_{u_0,z}(u)}_{Y([0,T])}\leq& c\norm{u_{0}}_{L^{2}_{x}}+c\norm{u}_{Y([0,T])}^{5}+\norm{z}_{Y([0,T])},
\end{align*}
and similarly we have
\begin{align*}
    \norm{\Gamma_{u_0,z}(u)-\Gamma_{u_0,z}(v)}_{Y([0,T])}\leq& c(\norm{u}_{Y([0,T])}^{4}+\norm{v}_{Y([0,T])}^{4})\norm{u-v}_{Y([0,T])}.
\end{align*}
Therefore, for $\omega$-a.s., choosing $R(\omega)$ as above guarantees the $\omega$ almost sure existence and uniqueness of the fixed point of the contraction map $\Gamma_{u_0,z}$ in $B_{R,T}^{0}$ provided that
\begin{equation}\label{smallness_condition}
    (2(c\norm{u_0}_{L^{2}_x}+\norm{z}_{Y([0,T])}))^{4}\leq \frac{1}{4c},\quad\text{$\omega$-a.s.,}
\end{equation}
which is possible by choosing $T>0$ sufficiently small. 

Next, we remove the smallness condition \eqref{smallness_condition}. Let $u_0\in L^{2}_{x}(\R)$. Then, in view of the proof of \cite[Theorem 2.10]{Kenig_gKdV_1993}, we deduce that for any $\varepsilon>0$, there exist $T_1=T_1(u_0,\varepsilon)$ and $\delta_1 =\delta_1 (u_0,\varepsilon)$ such that if $\norm{u_0-\widetilde{u}_0}_{L^{2}_{x}}<\delta_1$, then
\begin{equation}\label{smallness_of_tilde_u_0}
    \begin{aligned}
        \norm{\partial_{x}V(t)\widetilde{u}_0}_{L^{\infty}_{x}L^{2}_{t}([0,T_1])}<\varepsilon,\quad
        \norm{V(t)\widetilde{u}_{0}}_{L^{5}_{x}L^{10}_{t}}<\varepsilon.
    \end{aligned}
\end{equation}
Moreover, for any $\delta_2>0$ and any Schwartz function $\phi(x)$, we can find a Schwartz function $\widetilde{\phi}(x)$ such that
\begin{equation*}
    \sup_{\eta,\theta\in\N}\sup_{x\in\R}\vert x^{\eta}\langle\partial_{x}\rangle^{\theta}(\widetilde{\phi}-\phi)(x)\vert<\delta_{2}.
\end{equation*}
Therefore, by \cite[Proposition 2.2.6]{Grafakos_book}, we have 
\begin{equation}\label{bound_of_approx_phi}
    \Vert\widetilde{\phi}-\phi\Vert_{W^{s,p}_{x}}<c(p)\delta_2,\quad \text{for}\,\,s\in\R,\, 1\leq p\leq\infty.
\end{equation}
We shall denote
\begin{equation*}
    \widetilde{z}(t)=\int_{0}^{t}V(t-t')\widetilde{\phi}(x)g(t')dB(t').
\end{equation*} Then, from \eqref{bound_for_z(t)_with_expectation} and \eqref{bound_of_approx_phi} (letting $(s,p)=(m,2)$), for $T>0$, we have
\begin{equation}\label{smallness_of_tilde_z}
\begin{aligned}
    \max_{j\in \{2,3\}}\beta_{j,T}(\widetilde{z})&\lesssim \max_{j\in \{2,3\}}\beta_{j,T}(z-\widetilde{z})+\max_{j\in \{2,3\}}\beta_{j,T}(z)\\
    &\leq C(g)(T+T^{2+})\left(\Vert\phi-\widetilde{\phi}\Vert_{H^{m}_{x}}^{2}+\norm{\phi}_{H^{m}}^{2}\right)\\
    &\leq C(g)(T+T^{2+})(\delta_2^2+\norm{\phi}_{H^{m}}^{2})
\end{aligned}
\end{equation}
where $m$ is a sufficiently large positive integer (depending on the calculations in Section \ref{A priori estimates}). Next, we will show, for sufficiently small $T>0$, that $\max_{j\in\{ 2,3\}}\alpha_{j,T}(\widetilde{z})$ is arbitrarily small almost surely. First note that using Chebyshev's inequality with \eqref{smallness_of_tilde_z}, for any $\varepsilon>0$, we obtain the following bound:
\begin{equation}\label{chebine}
    \mathbb{P}\left(\max_{j\in \{2,3\}}\alpha_{j,T}(\widetilde{z})\geq \varepsilon\right)\leq \varepsilon^{-2}\max_{j\in \{2,3\}}\beta_{j,T}(\widetilde{z})\leq \varepsilon^{-2} C(g)(T+T^{2+})(\delta_2^2+\norm{\phi}_{H^{m}}^{2}).
\end{equation}
Now, given $\varepsilon>0$, we set \begin{align*}
 T_n=C(\varepsilon, \delta_2,g,\phi,m)2^{-n},\quad \Omega_{n}:=\left\{\max_{j\in \{2,3\}}\alpha_{j,T_n}(\widetilde{z})\geq \varepsilon\right\},\quad n\in\mathbb{N}.
\end{align*}Then, due to \eqref{chebine}, we infer that
\begin{equation*}
    \sum_{n\in\mathbb{N}}\mathbb{P}(\Omega_{n})<\infty.
\end{equation*}
Thus, as an application of the Borel-Cantelli lemma, we conclude that given $\varepsilon>0$, for $\omega$-a.s., there exists  $N(\omega)>0$ such that for all $n\geq N(\omega)$, we have
\begin{equation*}
    \max_{j\in \{2,3\}}\alpha_{j,T_{n}}(\widetilde{z})<\varepsilon\quad\text{$\omega$-a.s.}.
\end{equation*}
Note that due to \eqref{smallness_of_tilde_z}, the same argument holds for $\max_{j\in \{2,3\}}\alpha_{j,T_n}(z-\widetilde{z})$ as well. In particular, since $\max_{j\in \{2,3\}}\alpha_{j,T}(\cdot)$ is increasing in $T$ (see \eqref{spacetime_norms}), for $T_2\in(0,T_{N(\omega)}]$, we have
\begin{equation}\label{almost_sure_smallness_of_stoch_conv}
    \max_{j\in \{2,3\}}\alpha_{j,T_{2}}(\widetilde{z})<\varepsilon,\quad\max_{j\in \{2,3\}}\alpha_{j,T_{2}}(z-\widetilde{z})<\varepsilon\quad\text{$\omega$-a.s..}
\end{equation}
To complete the discussion of the removal of the smallness condition \eqref{smallness_condition}, we define the following norm:
\begin{equation}\label{new_alpha_1_norm}
     \widetilde{\alpha}_{1,T}(u):=\norm{u(t)-V(t)u_{0}-z(t)}_{L^{\infty}_{t}L^{2}_{x}([0,T])},
 \end{equation} 
by which we consider the ball
$$\widetilde{B}_{R,T}^{0}:=\{u:C^{0}_{t}L^{2}_{x}([0,T]\times\R) |\,\norm{u}_{\widetilde{Y}([0,T])}\leq R\}$$ 
where $R$ is to be determined later and
\begin{equation*}
   \norm{u}_{\widetilde{Y}([0,T])}=\max\{\widetilde{\alpha}_{1,T}(u),\alpha_{2,T}(u),\alpha_{3,T}(u)\}.
\end{equation*} 
Taking into account the discussion above, let us pick $\delta=\min\{\delta_{1},\delta_{2},\varepsilon\}$ and $T=T(\omega)=\min\{T_{1},T_{2}\}$, and assume that $\max\{\norm{u_{0}-\widetilde{u}_0}_{L^{2}_{x}},\Vert\phi-\widetilde{\phi}\Vert_{H^{m}_x}\}<\delta$. Let $u\in\widetilde{B}_{R,T}^{0}$. Using the estimate \eqref{est:Kato_2} with $(p,q)=(\infty,2)$ and H\"older's inequality, we have:
\begin{equation*}
   \norm{\int_{0}^{t}V(t-t')(u^{5})_{x}(t')dt'}_{L^{2}_{x}}\leq c\norm{u^5}_{L^{1}_{x}L^{2}_{t}([0,T])}\leq c(\alpha_{3,T}(u))^{5}
\end{equation*}
which implies that
\begin{equation}\label{estimation_tilde_alpha_1}
    \widetilde{\alpha}_{1,T}(\Gamma_{u_0,z}(u))=\norm{\int_{0}^{t}V(t-t')(u^{5})_{x}(t')dt'}_{L^{\infty}_{t}L^{2}_{x}([0,T])}\leq c\norm{u}_{\widetilde{Y}([0,T])}^{5}.
\end{equation}
 We use the triangle inequality to obtain:
\begin{multline*}\label{estimation_for_duhamel_term}
  \alpha_{3,T}(\Gamma_{u_0,z}(u))\leq\norm{V(t)(u_0-\widetilde{u}_0)}_{L^{5}_{x}L^{10}_{t}([0,T])}+\norm{V(t)\widetilde{u}_0}_{L^{5}_{x}L^{10}_{t}([0,T])}\\+\norm{z-\widetilde{z}}_{L^{5}_{x}L^{10}_{t}([0,T])}
    +\norm{\widetilde{z}}_{L^{5}_{x}L^{10}_{t}([0,T])}+\norm{\int_{0}^{t}V(t-t')(u^{5})_{x}(t')dt'}_{L^{5}_{x}L^{10}_{t}([0,T])}.
\end{multline*}
Implementing the estimates \eqref{est:Kato_1}, \eqref{smallness_of_tilde_u_0}, and \eqref{almost_sure_smallness_of_stoch_conv} in turn for the first four terms on the right side of above estimate, we get
\begin{equation*}\label{estimation_for_duhamel_term-2}
    \alpha_{3,T}(\Gamma_{u_0,z}(u))\leq c\norm{u_{0}-\widetilde{u}_0}_{L^{2}_{x}}+c\varepsilon+\norm{\int_{0}^{t}V(t-t')(u^{5})_{x}(t')dt'}_{L^{5}_{x}L^{10}_{t}([0,T])}\quad\text{$\omega$-a.s..}
\end{equation*}
Applying \eqref{Kato_estimate_Dodson} (considering the inhomogeneous problem \eqref{eq:inhom_airy} with $u_0=0$ and $g=(u^5)_x$) with $(p_2,q_2)=(\infty,2)$ to the term with integral part in the above estimate and the H\"older's inequality, we ultimately get the following:
\begin{equation}\label{estimations_alpha_2}
\begin{aligned}
    \alpha_{3,T}(\Gamma_{u_0,z}(u))\leq c\varepsilon+c\Vert D^{-1}_x(u^5)_x\Vert_{L^1_xL^2_t([0,T])} \leq c\varepsilon+c\norm{u}_{\widetilde{Y}([0,T])}^{5}\quad\text{$\omega$-a.s..}
    \end{aligned}
\end{equation}
Similarly, we also obtain:
\begin{equation}\label{estimations_alpha_3}
    \alpha_{2,T}(\Gamma_{u_0,z}(u))\leq c\varepsilon+c\norm{u}_{\widetilde{Y}([0,T])}^{5}\quad\text{$\omega$-a.s.}.
\end{equation}
Therefore, combining \eqref{estimation_tilde_alpha_1}, \eqref{estimations_alpha_2}, and \eqref{estimations_alpha_3}, almost surely, we have
\begin{equation*}\label{estimations_for_self map}
        \norm{\Gamma_{u_0,z}(u)}_{\widetilde{Y}([0,T])}\leq c\varepsilon+c\norm{u}_{\widetilde{Y}([0,T])}^{5}.
\end{equation*}
Therefore, whenever
\begin{equation}\label{self_map_condition}
    c\varepsilon+cR^{5}< R
\end{equation}
we have $\Gamma_{u_0,z}(\widetilde{B}_{T,R}^{0})\subseteq \widetilde{B}_{T,R}^{0}$. Moreover, for $u, \widetilde{u}\in\widetilde{B}_{R,T}^{0}$, similar computations as in \eqref{estimation_tilde_alpha_1}, \eqref{estimations_alpha_2}, and \eqref{estimations_alpha_3} yield
\begin{equation}\label{estimations_for_contraction map}
    \norm{\Gamma_{u_0,z}(u)-\Gamma_{u_0,z}(\widetilde{u})}_{\widetilde{Y}([0,T])}\leq c(\norm{u}^{4}_{\widetilde{Y}([0,T])}+\norm{\widetilde{u}}_{\widetilde{Y}([0,T])}^{4})\norm{u-\widetilde{u}}_{\widetilde{Y}([0,T])}.
\end{equation}
Then, as $u, \widetilde{u}\in\widetilde{B}_{R,T}^{0}$, taking
\begin{equation}\label{contraction_map_condition}
    2cR^{4}<\frac{1}{2}
\end{equation}
in \eqref{estimations_for_contraction map} implies that $\Gamma_{u_0,z}$ is a contraction map. Therefore, in order for \eqref{self_map_condition} and \eqref{contraction_map_condition} to hold simultaneously, by fixing $\varepsilon>0$ such that $(2c)^{5}\varepsilon^{4}<\frac{1}{2}$, we let $R=2c\varepsilon$. Lastly, the continuous dependence on the initial data follows similarly; we refer to \cite{deBouard_H1, Kenig_gKdV_1993}.
\begin{Remark}
    The above discussion has demonstrated that we choose the existence time $T>0$ irrespective of the initial data $u_0$ in the contraction mapping argument under the smallness condition \eqref{smallness_condition}. Moreover, the smallness condition \eqref{smallness_condition} does not only depend on the smallness of the data but also on the smallness of the norm $\norm{z}_{Y([0,T])}$, which is provided when $T$ is taken sufficiently small (similar to the argument to obtain \eqref{almost_sure_smallness_of_stoch_conv}). In removing \eqref{smallness_condition}, we have seen that the existence time needs to be dependent on the position of the initial data and the components of the stochastic convolution. Thus, in contrast to many subcritical equations, where the local existence time can often depend only on the size of the data, in our discussion, however, the argument for the removal of the smallness condition forces us to incorporate more dependencies into the existence time $T$.
\end{Remark}
In the case $k>4$, $\omega$ almost sure existence and uniqueness of the fixed point of $\Gamma_{u_0,z}$ defined in \eqref{contractionmapp} follows in a similar way, in which case our argument depends on the resolution spaces $Z$ defined by the following norm:
\begin{multline}\label{Z_space}
    \norm{u}_{Z}:=\max\{\norm{D_{x}^{s_k}u}_{C^{0}_{t}L^{2}_{x}([0,T])},\norm{D^{1+s_k}_{x}u}_{C^{0}_{x}L^{2}_{t}([0,T])},\norm{D_{x}^{s_k}u}_{L^{5}_{x}L^{10}_{t}([0,T])},\\
    \Vert D_x^{\frac{1}{10}-\frac{2}{5k}} D_t^{\frac{3}{10}-\frac{6}{5k}}  u\Vert_{L^{p_{k}}_{x} L^{q_{k}}_{t}([0,T])}\}.
\end{multline}

\subsection{Global existence of solutions}\label{glexst}
 We start by noting the basic fact that when $k$ is an even integer, the energy \eqref{energy_functional} turns into a sign-definite functional, which is of fundamental importance to begin with our present discussion (see the assumption of $(ii)$ in Theorem \ref{thm:well-posedness}). We also point out another basic fact that, unlike the deterministic analogue of \eqref{eq:stochastic_gKdV}, the mass \eqref{mass_functional} and the energy \eqref{energy_functional} associated with the Cauchy problem \eqref{eq:stochastic_gKdV} are not conserved due to the additional stochastic forcing term appearing in \eqref{eq:stochastic_gKdV}. In contrast with the failure of conservation at the $L^2$ and $H^1$ levels, we can still show that the $r$-th moments of the mass and energy remain bounded for all time and for all $r\geq 1$:
\begin{equation}\label{momenmass}
    \mathbb{E}\left[\sup\limits_{0\leq t<\infty}(M(u(t))+E(u(t)))^{r}\right]\leq C(M(u_{0}),E(u_{0}),r,\phi,g),
\end{equation}
where $\phi$ and $g$ are introduced below \eqref{eq:stochastic_gKdV}. In order to show \eqref{momenmass}, instead of applying Itô's lemma to the typical elements $(M(u(t)+E(u(t)))^{m}$ (where $m\in\mathbb{Z}_+$ is sufficiently large), we apply it to $(C(\phi)+M(u(t))+E(u(t)))^{m}$ to derive a much simplified bound (as in \eqref{boundforlem23}), where $C(\phi)$ is a constant consisting of various norms of $\phi$. Therefore,
\begin{equation}\label{ito_for_moment_of_mass_energy}
    \begin{aligned}
    (C(\phi)+M(u(t))+E(u(t)))^{m}=&(C(\phi)+M(u(0))+E(u(0)))^{m}\\
    &+m\int_{0}^{t}(C(\phi)+M(u(t'))+E(u(t')))^{m-1}F_{2}(t')dt'\\
    &+\frac{m(m-1)}{2}\int_{0}^{t}(C(\phi)+M(u(t'))+E(u(t')))^{m-2}F_{1}^{2}(t')dt'\\
    &+m\int_{0}^{t}(C(\phi)+M(u(t'))+E(u(t')))^{m-1}F_{1}(t')dB(t'),
    \end{aligned}
\end{equation}
where
\begin{equation*}
    F_{1}(t')=\int_{\mathbb{R}}(2u(t')\phi+u^{k+1}(t')\phi+u_{x}(t')\phi_{x})g(t')dx,
\end{equation*}
and
\begin{equation*}
    F_{2}(t')=\int_{\mathbb{R}}\left(\frac{k+1}{2}u^{k}(t')\phi^{2}+\phi^{2}+\phi_{x}^{2}\right)g^{2}(t')dx.
\end{equation*}
Note that the stochastic integral \begin{align*}
    m\int_{0}^{t}(C(\phi)+M(u(t'))+E(u(t')))^{m-1}F_{1}(t')dB(t')=:\mathcal{M}_{t}
\end{align*} 
appearing in \eqref{ito_for_moment_of_mass_energy} is a continuous martingale in $t$, which is necessary to apply Lemma \ref{gronwall}. We first intend to estimate $F_{1}$ and $F_{2}$. Thus applying Hölder inequality in the $x$ variable and then Young inequality, we have
\begin{align*}
    F_{1}(t')\leq&(2\Vert\phi\Vert_{L^{2}_{x}}\Vert u(t')\Vert_{L^{2}_{x}}+\Vert\phi\Vert_{L^{k+2}_{x}}\Vert u(t')\Vert_{L^{k+2}_{x}}^{k+1}+\Vert\phi_{x}\Vert_{L^{2}_{x}}\Vert u_{x}(t')\Vert_{L^{2}_{x}})\vert g(t')\vert \\
    \leq& (C(\phi)+M(u(t'))+E(u(t')))\vert g(t')\vert.
\end{align*}
Similarly, $F_{2}(t')$ is shown to be bounded by the following:
\begin{align*}
    F_{2}(t')\leq (C(\phi)+M(u(t'))+E(u(t')))g^{2}(t').
\end{align*}
Using these bounds on $F_1$ and $F_2$, we get
\begin{equation}
\begin{aligned}\label{boundforlem23}
    (C(\phi)+M(u(t))+E(u(t)))^{m}\leq&(C(\phi)+M(u(0))+E(u(0)))^{m}\\
    &+\frac{m(m+1)}{2}\int_{0}^{t}(C(\phi)+M(u(t'))+E(u(t')))^{m}g^{2}(t')dt'+\mathcal{M}_{t}.
\end{aligned}
\end{equation}
Now applying Lemma \ref{gronwall} for \eqref{boundforlem23} with any $0<q<p<1$ and stopping time $\tau>0$, we obtain
\begin{align*}
    \left(\mathbb{E}\left[\sup\limits_{0\leq t\leq\tau}(C(\phi)+M(u(t))+E(u(t)))^{mq}\right]\right)^{1/q}&\leq\left(\frac{p}{p-q}\right)^{1/q}C(M(u_{0}),E(u_{0}),m,\phi)\\&\times\left(\mathbb{E}\left[\exp\left(\frac{pm(m+1)}{2(1-p)}\int_{0}^{\tau}g^{2}(t')dt'\right)\right]\right)^{\frac{1-p}{p}}\\&=:C(M(u_{0}),E(u_{0}), m, q, \phi, g).
\end{align*}
Note that since $g\in L^{2}_{t}([0,\infty))$, $\omega$-a.s., the above bound is uniform in $\tau$. Also, since $C(\phi)$ is a positive deterministic constant, the above bound is valid for moments of mass and energy. Thus, setting $mq=r$ one can deduce \eqref{momenmass}, because choosing $m$ sufficiently large, we may have $mq\geq 1$. Combining this with the local well-posedness results of Section \ref{locwelres}, we conclude (as in the proof of \cite[Theorem 1.2]{Herr_2019}) $\omega$ almost surely that
\begin{equation*}
    \sup_{t\in[0,\infty)}\norm{u(t)}_{L^{2}_{x}}<\infty,
\end{equation*}
when $k=4$ and
\begin{equation*}
    \sup_{t\in[0,\infty)}\norm{u(t)}_{H^{1}_{x}}<\infty,
\end{equation*}
provided even integer $k>4$. This completes the proof of Theorem \ref{thm:well-posedness}.
\section{Estimates on the tail of the stochastic convolution}\label{section:est_on_tail}\label{chap4}
In this section, we are concerned with $\omega$ almost sure global-in-time boundedness of the space-time norms $L^{p}_{x}L^{q}_{t}([T,\infty))$ and $L^{q}_{t}L^{p}_{x}([T,\infty))$, satisfying the Strichartz and Kato admissibility conditions (see \eqref{eq:admissible_pair} and \eqref{eq:Kato_admissible}), of the tail of the stochastic convolution $z_{*}(t)$ (defined in \eqref{tail_of_stochastic_convolution}), $t\in[T,\infty)$ where $T\gg 1$ is a large time parameter. Based on observations in our previous work for the stochastic nonlinear Schr\"odinger equations \cite{Basakoglu_Scattering_NLS} (the situation with regard to Strichartz estimates is the same in the current paper since $\phi$ is in the Schwartz class, and the operators $\langle D_{x}\rangle^{s}$ and $V(t)$, defined in \eqref{airy_propagator}, commutes. Therefore, we refer to Section $2.2$ in \cite{Basakoglu_Scattering_NLS} for details), we have
\begin{equation}\label{tailest}
    \Vert z_{*}(t)\Vert_{W^{s,p}_{x}}\lesssim\langle t\rangle^{-\gamma+\frac{1}{2}+}\quad \text{-a.s.}
\end{equation}
where $s\geq 0$ and $\vert g(t)\vert=o(t^{-\gamma})$ $\omega$ almost surely as $t\to\infty$. Let $(p,q)$ be an admissible pair satisfying \eqref{eq:admissible_pair}. Then using \eqref{tailest}, we have:
\begin{align*}
    \Vert z_{*}\Vert_{L^{q}_{t}W^{s,p}_{x}([T,\infty))}
    \lesssim\Big(\int_{T}^{\infty}\vert t\vert^{q(-\gamma+\frac{1}{2}+)}dt\Big)\lesssim 1
\end{align*}
uniformly in $T\gg 1$ provided that $\gamma>\frac{1}{2}+\frac{1}{q}$. Note that in order not to impose an extra time decay on $g(t)$, we may take $\beta=0$, depending on the fact that the pair $(p,q)$ satisfies the relation \eqref{eq:admissible_pair} for $2\leq p\leq \infty$ and $\beta\in[0,\frac{1}{2}]$. Then we necessarily require $\gamma>\frac{2}{3}$. The main result of this section is as follows:

\begin{proposition}\label{mainprop}
Let $(p,q,\alpha)$ be a Kato triple. Let $g(t)=(1+|t|)^{-\gamma}$ with $\gamma>\frac{2}{3}$.  For $T\gg 1$, we have  \begin{equation*}\label{kato_type_est_tail}
	\begin{aligned}
		\|D^{\alpha}_xz_*(t,x)\|_{L^p_xL^q_t([T,\infty))}\leq C_{p, q, T}C(\phi)\quad \text{for $\omega$-a.s.},
	\end{aligned}	
\end{equation*}
where $C(\phi)$ consists of various norms of $\phi$. 
\end{proposition}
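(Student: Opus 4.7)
The plan is to first establish a moment bound of the form $\mathbb{E}\norm{D^\alpha_x z_*}^r_{L^p_x L^q_t([T,\infty))}\leq C(r,p,q,T)\,C(\phi)^r$ for a sufficiently large exponent $r$, and then upgrade this to the almost-sure statement by a Chebyshev--Borel--Cantelli argument on a dyadic sequence $T_n = 2^n$, of the same form as the one leading to \eqref{almost_sure_smallness_of_stoch_conv}. To pass the $\omega$-moment inside the space--time norms, I use Minkowski's integral inequality: for any $r\geq\max(p,q)$,
\[
\bigl(\mathbb{E}\norm{D^\alpha_x z_*}^r_{L^p_x L^q_t([T,\infty))}\bigr)^{1/r}\leq \norm{\norm{\norm{D^\alpha_x z_*(t,x)}_{L^r_\omega}}_{L^q_t([T,\infty))}}_{L^p_x}.
\]
The Burkholder--Davis--Gundy inequality \eqref{B-D-G_inequality} applied pointwise in $(t,x)$ to the It\^o integral \eqref{tail_of_stochastic_convolution} then gives
\[
\norm{D^\alpha_x z_*(t,x)}_{L^r_\omega}\lesssim_r \Bigl(\int_t^\infty |V(t-t')D^\alpha_x\phi(x)|^2(1+t')^{-2\gamma}\,dt'\Bigr)^{1/2},
\]
reducing the problem to a purely deterministic space--time estimate on the Airy kernel.

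The next step is a pointwise analysis of $V(s)D^\alpha_x\phi$ for large $|s|$. Writing
\[
V(s)D^\alpha_x\phi(x)=\int_\R e^{i(x\xi+s\xi^3)}|\xi|^\alpha\hat\phi(\xi)\,d\xi
\]
and scaling $\xi = |s|^{-1/3}\eta$ recasts this, up to the prefactor $|s|^{-(1+\alpha)/3}$, as a time-scaled version of the oscillatory integral $I^{3,\alpha}$ from \eqref{osc1}, now twisted by a Schwartz amplitude $\hat\phi(|s|^{-1/3}\,\cdot)$. Invoking the sharp pointwise estimates for $I^{3,\alpha}$ developed earlier in this section, which are valid on the full range $-1<\alpha<2$ and hence cover the Kato range $\alpha\in[-\tfrac{1}{4},1]$, together with the rapid decay of $\hat\phi$, yields a kernel bound of the form
\[
|V(s)D^\alpha_x\phi(x)|\leq C(\phi)\,|s|^{-(1+\alpha)/3}\,\Phi(x/|s|^{1/3}),
\]
with an envelope $\Phi$ that is bounded on $\R$ and has integrable decay at infinity.

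Substituting this bound, changing variables $s=t'-t$, and then taking the $L^q_t$ norm followed by the $L^p_x$ norm, the Kato identities $\alpha = \tfrac{2}{q}-\tfrac{1}{p}$ and $\tfrac{2}{p}=\tfrac{1}{2}-\tfrac{1}{q}$ collapse the resulting expression into a scale-balanced double integral in $(s,x)$ weighted by $(1+t+s)^{-2\gamma}$. Performing the $x$-integration using the decay of $\Phi$, the remaining $s$-integral is convergent at $s=0$ because $\alpha<2$ and at $s=\infty$ precisely under the assumption $\gamma>\tfrac{2}{3}$; this is the same threshold already encountered heuristically in \eqref{tailest}. The outer $L^q_t([T,\infty))$ integration over $t$ then contributes a negative power of $T$, yielding the stated constant $C_{p,q,T}$, and the Chebyshev--Borel--Cantelli step described above delivers the almost-sure bound.

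The main obstacle is carrying out the deterministic kernel estimate sharply and uniformly over the full Kato range of $\alpha$. Near the endpoints $\alpha = -\tfrac{1}{4}$ and $\alpha = 1$, the stationary-phase contribution competes with the non-stationary one in $I^{3,\alpha}$, and it is essential to use the refined version of the oscillatory integral estimates developed in this section (rather than the coarser bounds available in \cite{Kenig_gKdV_1993,Linares_book}) so that the ensuing $(s,x)$-integral converges both at the origin and at infinity; the exponent $\tfrac{2}{3}$ in the hypothesis $\gamma>\tfrac{2}{3}$ is precisely the critical value dictated by this analysis at the Kato endpoint.
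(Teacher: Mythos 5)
Your overall strategy (moment estimates via Minkowski together with Gaussian/BDG bounds for the Wiener integral, then a Chebyshev--Borel--Cantelli upgrade to an almost-sure statement, with the deterministic content supplied by the oscillatory integral estimates of this section) is a reasonable alternative to the paper's use of the law of the iterated logarithm, which yields a pathwise bound directly and dispenses with the Borel--Cantelli step. However, there are two genuine gaps in the proposal.

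First, the claimed kernel bound $|V(s)D^\alpha_x\phi(x)|\leq C(\phi)\,|s|^{-(1+\alpha)/3}\,\Phi(x/|s|^{1/3})$ with $\Phi$ bounded and of \emph{integrable} decay cannot be true. After the rescaling $\xi=|s|^{-1/3}\eta$, the amplitude becomes $\hat\phi(|s|^{-1/3}\eta)$, which for large $|s|$ is essentially constant throughout the region where the oscillatory integral lives, so the Schwartz decay of $\hat\phi$ contributes no spatial decay uniformly in $s$. What one actually obtains from the convolution identity and Corollary~\ref{remark:osc_int_time_decay} is, for $\alpha>-\tfrac12$,
\begin{equation*}
|V(s)D^\alpha_x\phi(x)|\ \lesssim\ |s|^{-\frac{2\alpha+1}{4}}\,(1+|x|)^{\frac{2\alpha-1}{4}}\,C(\phi),
\end{equation*}
and $\frac{2\alpha-1}{4}>-1$ throughout the Kato range $\alpha\in[-\tfrac14,1]$, so the envelope is never integrable.

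Second, and more fatally for the proposed unification: placing this envelope in $L^p_x$ requires $p\cdot\frac{2\alpha-1}{4}<-1$, which along the Kato line $\alpha=1-\tfrac{5}{p}$ reduces exactly to $p<6$, i.e.\ $p<q$. For Kato triples with $p\geq q$ (equivalently $p\geq 6$, $\alpha\geq\tfrac16$), the oscillatory-integral envelope is not in $L^p_x$ and your scale-balanced double integral diverges in $x$. This is precisely why the paper's proof is split into two cases: for $p\geq q$ it does not use the oscillatory integrals at all, and instead applies the dispersive estimate $\|V(s)f\|_{L^p}\lesssim |s|^{-\frac13(1-\frac{2}{p})}\|f\|_{L^{p'}}$ (with Hausdorff--Young for short times), trading a weaker time decay for $L^p_x$ boundedness that holds for all $p$. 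In connection with this, your attribution of the threshold $\gamma>\tfrac23$ to the oscillatory-integral analysis at a Kato endpoint is mistaken: in the oscillatory-integral regime (Case 2 of the paper) the constraint on $\gamma$ is only $\gamma>\tfrac38$; the binding condition $\gamma>\tfrac23$ arises from the dispersive-estimate regime at $(p,q)=(\infty,2)$. To make the proposal work you would need to add a dispersive/Hausdorff--Young argument for the range $p\geq 6$, which amounts to reinstating the case distinction that the paper uses.
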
 We first need the corresponding oscillatory integral estimates.
\subsection{Oscillatory integral estimates} In this subsection, for $b\geq 2, b\in \N$ and suitable values of $\alpha\in\R$, we will estimate the oscillatory integral $I^{b,\alpha}(x)$ defined in \eqref{osc1} in terms of $x\in \mathbb{R}$. We start by observing that the following argument gives a lower bound on $\alpha$ for $I^{b,\alpha}$ to make sense. To this end, let $x$ be fixed and $K_x$ be defined in the following
\begin{equation*}
	0\leq \xi\leq  K_x, \qquad K_x=\frac{1}{10(1+|x|)}
\end{equation*}
so that we have the simple bound:
\begin{equation*}
	|\xi^b+x\xi|\leq \frac{1}{10^b}+\frac{1}{10}\leq \frac{1}{5}.
\end{equation*}
Then we have
\begin{equation*}
	\begin{aligned}
		\Big|\int_{0}^{K_x} e^{i(\xi^b+x\xi)}|\xi|^{\alpha}d\xi\Big|\geq \Big|\int_{0}^{K_x} \cos (\xi^b+x\xi)|\xi|^{\alpha}d\xi\Big|\geq \frac{1}{2}\int_{0}^{K_x} \xi^{\alpha}d\xi.
	\end{aligned}	
\end{equation*}
This imposes the condition $\alpha>-1$. Another condition emerges from decay at infinity. For simplicity, pick $x=0$. Then, by a change of variables $\eta=\xi^b$, we write
\begin{equation*}
	\begin{aligned}
		\Big|\int_{0}^{\infty} e^{i\xi^b}|\xi|^{\alpha}d\xi\Big|\geq  \frac{1}{b}\Big|\sum_{k=0}^{\infty}\int_{k\pi}^{(k+1)\pi} |\eta|^{\frac{\alpha+1}{b}-1}\sin \eta d\eta\Big|.
	\end{aligned}	
\end{equation*}
Here we must have $\frac{\alpha+1}{b}-1< 0$, otherwise the infinite series above would not converge. This imposes the condition $\alpha<b-1$. 
Once these two conditions are satisfied, the integral makes sense and satisfies the following decay estimates. 

\begin{proposition}\label{propIalphab}
	Let $\alpha\in  \R, b\in  \N$, and $b\geq 2$. Then
	\begin{equation*}
		\begin{aligned}
			|I^{b,\alpha}(x)|=\Big|\int_{\R} e^{i[\xi^b+x\xi]}|\xi|^{\alpha}d\xi\Big| \lesssim_{\alpha,b}\begin{cases} (1+|x|)^{-1-\alpha}  & \quad  -1<\alpha\leq  -1/2,  \\  (1+|x|)^{\frac{1}{b-1}(\alpha-\frac{b}{2}+1)}  & \quad  -1/2<\alpha<  b-1. \end{cases}
		\end{aligned}	
	\end{equation*}
\end{proposition}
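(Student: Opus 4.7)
By the change of variables $\xi\mapsto-\xi$ we have $|I^{b,\alpha}(x)|=|I^{b,\alpha}(-x)|$, so I will assume $x\leq 0$ and write $s=|x|$. When $s\lesssim 1$, the integrability conditions $-1<\alpha<b-1$ yield $|I^{b,\alpha}(x)|\lesssim_{\alpha,b}1$, which is dominated by the right-hand side of the claimed estimate. I therefore assume $s\gg 1$ and set $R=s^{1/(b-1)}$: the real critical points of the phase $\phi(\xi)=\xi^{b}+x\xi$ solve $b\xi^{b-1}=s$ and hence lie at scale $R$ (one critical point when $b$ is even, two when $b$ is odd). My plan is to split the integral into three regimes via a smooth partition of unity --- an inner piece $|\xi|\ll R$, a near-critical piece $|\xi-\xi_{0}|\lesssim R$, and an outer piece $|\xi|\gg R$ --- and to apply Van der Corput (Lemma \ref{lemma:van der Corput}) on each regime after a further dyadic decomposition to handle the weight $|\xi|^{\alpha}$, noting that $\phi'$ is monotone on each of $\R^{\pm}$ so the lemma applies with $k=1$.

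On the inner regime $|\xi|\leq R/C$ (with $C$ a large absolute constant) one has $|\phi'(\xi)|=|b\xi^{b-1}-s|\geq s/2$. For a dyadic shell $|\xi|\sim 2^{j}$ I take $f(\xi)=|\xi|^{\alpha}\chi(\xi/2^{j})$ with $\chi$ a smooth bump; a direct computation gives $\|f\|_{\infty}+\|f'\|_{1}\lesssim 2^{j\alpha}$, and Van der Corput with $k=1$ yields contribution $\lesssim s^{-1}2^{j\alpha}$. At the innermost scales $|\xi|\lesssim 1/s$, where Van der Corput ceases to improve over the trivial estimate, I use $\int_{|\xi|\lesssim 1/s}|\xi|^{\alpha}\,d\xi\lesssim s^{-1-\alpha}$ (which is where the lower threshold $\alpha>-1$ enters). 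Summing dyadically over $1/s\lesssim 2^{j}\lesssim R$ yields total contribution $\lesssim s^{-1-\alpha}$ when $\alpha<0$ (the smallest shell dominates) and $\lesssim s^{-1}R^{\alpha}=s^{(\alpha-b+1)/(b-1)}$ when $\alpha\geq 0$ (the largest shell dominates), with a harmless logarithm at $\alpha=0$.

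On the near-critical regime I pick a cutoff $\psi$ of width $cR$ at each critical point $\xi_{0}$ (with $|\xi_{0}|\sim R$); on its support $|\phi''(\xi)|=b(b-1)|\xi|^{b-2}\sim R^{b-2}$, and $f=|\xi|^{\alpha}\psi$ obeys $\|f\|_{\infty}+\|f'\|_{1}\lesssim R^{\alpha}$, so Van der Corput with $k=2$ gives the stationary-phase contribution $\lesssim R^{-(b-2)/2}R^{\alpha}=s^{(\alpha-b/2+1)/(b-1)}$. On the outer regime $|\xi|\geq CR$ I dyadically decompose and use $|\phi'(\xi)|\sim|\xi|^{b-1}\sim 2^{j(b-1)}$ on $|\xi|\sim 2^{j}$; Van der Corput with $k=1$ gives contribution $\lesssim 2^{j(\alpha-b+1)}$, and since $\alpha<b-1$ the geometric sum over $2^{j}\gtrsim R$ is bounded by $R^{\alpha-b+1}=s^{(\alpha-b+1)/(b-1)}$. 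Combining,
\[|I^{b,\alpha}(x)|\lesssim s^{-1-\alpha}+s^{(\alpha-b/2+1)/(b-1)}+s^{(\alpha-b+1)/(b-1)};\]
the third term is always dominated by the second, and a direct algebraic comparison shows that $s^{-1-\alpha}\geq s^{(\alpha-b/2+1)/(b-1)}$ iff $\alpha\leq -1/2$, producing the dichotomy stated in the proposition.

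\textbf{Main obstacle.} The most delicate step will be the near-critical analysis: the width of the cutoff $\psi$ must be tuned so that $|\phi''|$ stays comparable to $R^{b-2}$ throughout its support while the $\|f\|_{\infty}$ and $\|f'\|_{1}$ terms in Van der Corput do not exceed $R^{\alpha}$, and so that the complementary region matches smoothly with the inner and outer regimes where $|\phi'|$ is large enough for the $k=1$ bound. A secondary subtlety is the singularity of $|\xi|^{\alpha}$ at the origin when $\alpha<0$, which forces me to halt the dyadic descent at scale $1/s$ and invoke integrability below that scale --- this is precisely where the hypothesis $\alpha>-1$ becomes essential.
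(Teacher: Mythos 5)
The overall architecture of your argument — split at small $|x|$ vs.\ large $|x|$, then for large $|x|$ separate the singularity at $\xi=0$, a nonstationary inner piece, the region $|\xi|\sim R=|x|^{1/(b-1)}$ where the phase degenerates, and the nonstationary outer piece, with Van der Corput $k=2$ supplying the exponent $\frac{1}{b-1}(\alpha-\frac{b}{2}+1)$ and the $\xi$-near-origin contribution supplying $-1-\alpha$ — matches the paper exactly. The difference is in technique on the nonstationary regions: the paper applies integration by parts directly on each of its intervals $I_{12}^*, I_{13}^*, I_{21}^*, I_{22}^*, I_3^*$ and tracks the boundary terms explicitly, whereas you dyadically decompose $1/s\lesssim|\xi|\lesssim R$ and $|\xi|\gtrsim R$ and invoke Van der Corput with $k=1$ on each shell before summing a geometric series. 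The two are essentially interchangeable (Van der Corput $k=1$ is itself proved by integration by parts); your version is a bit more systematic and scales more transparently, while the paper's is more explicit about the constants. Both are valid routes to the same decomposition.

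One genuine error to correct: your opening symmetry claim ``by the change of variables $\xi\mapsto-\xi$ we have $|I^{b,\alpha}(x)|=|I^{b,\alpha}(-x)|$'' is \emph{false} when $b$ is odd. In that case the substitution sends $\xi^b\mapsto-\xi^b$, so it yields only $I^{b,\alpha}(x)=\overline{I^{b,\alpha}(x)}$ (i.e.\ the integral is real), not evenness in $x$. Fortunately the conclusion you want (reduce to $x\le 0$) is still available, but for a different reason: when $b$ is odd and $x>0$ one has $\phi'(\xi)=b\xi^{b-1}+x\ge x>0$ with $b-1$ even, so there are no real stationary points at all; the near-critical piece is empty and only the inner and outer nonstationary contributions remain, which give the (stronger) bound $s^{-1-\alpha}+s^{(\alpha-b+1)/(b-1)}$. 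You need to state that case separately rather than appeal to a symmetry that does not hold. Similarly, when you pin the near-critical cutoff to width $cR$ around each $\xi_0$, be sure the complementary annulus $R/C\le|\xi|\le CR$ outside the cutoffs is covered — the cleanest fix is to apply Van der Corput $k=2$ on the \emph{whole} shell $|\xi|\sim R$ (where $|\phi''|\sim R^{b-2}$ uniformly) rather than only on a thin neighborhood of $\xi_0$; you flagged this yourself as the delicate point, and that is the right concern.
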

We scale this estimate to obtain the following corollary: 
\begin{Corollary}\label{remark:osc_int_time_decay} With the same $\alpha$ and $b$ as in Proposition \ref{propIalphab}, we have
\begin{equation*}
\begin{aligned}
	\Big|\int_{\R} e^{i[t\xi^b+x\xi]}|\xi|^{\alpha}d\xi\Big| &\lesssim_{\alpha} \begin{cases}
		(|t|^{1/b}+|x|)^{-\alpha-1} \quad  &   \quad  -1<\alpha\leq -1/2,  \\ 	(|t|^{1/b}+|x|)^{\frac{1}{b-1}(\alpha-\frac{b}{2}+1   )}|t|^{\frac{1}{b-1}(-\alpha-\frac{1}{2})} \quad  &   \quad  -1/2<\alpha<b-1.
	\end{cases}
\end{aligned}
\end{equation*}
\end{Corollary}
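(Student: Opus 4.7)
The plan is to deduce Corollary \ref{remark:osc_int_time_decay} from Proposition \ref{propIalphab} by a rescaling in the frequency variable, so that the $t\xi^b$ phase is normalized to $\pm\xi^b$ and we are left with an instance of $I^{b,\alpha}$ evaluated at a shifted argument. For $t\neq 0$ (the case we need, since we will apply this with $t\geq T\gg 1$), substitute $\xi=|t|^{-1/b}\eta$ into the integral. A direct computation gives
\begin{equation*}
  \int_{\R} e^{i(t\xi^b+x\xi)}|\xi|^{\alpha}d\xi=|t|^{-(\alpha+1)/b}\int_{\R}e^{i(\mathrm{sgn}(t)\eta^b+y\eta)}|\eta|^{\alpha}d\eta,\qquad y:=\frac{x}{|t|^{1/b}}.
\end{equation*}
Since the integral with $-\eta^b$ in the phase is just the complex conjugate of the one with $+\eta^b$ (after $y\mapsto -y$), its modulus equals $|I^{b,\alpha}(\mp y)|$, and Proposition \ref{propIalphab} applies at the point $|y|=|x|/|t|^{1/b}$ regardless of the sign of $t$.

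Next I would simply substitute the two bounds of Proposition \ref{propIalphab} and simplify. Using $1+|y|=(|t|^{1/b}+|x|)/|t|^{1/b}$, in the range $-1<\alpha\leq -1/2$ one gets
\begin{equation*}
  |t|^{-(\alpha+1)/b}(1+|y|)^{-\alpha-1}=|t|^{-(\alpha+1)/b}\cdot |t|^{(\alpha+1)/b}(|t|^{1/b}+|x|)^{-\alpha-1}=(|t|^{1/b}+|x|)^{-\alpha-1},
\end{equation*}
which is the first case of the Corollary. In the range $-1/2<\alpha<b-1$ the same substitution yields
\begin{equation*}
  |t|^{-(\alpha+1)/b}\cdot|t|^{-\frac{1}{b(b-1)}(\alpha-\frac{b}{2}+1)}(|t|^{1/b}+|x|)^{\frac{1}{b-1}(\alpha-\frac{b}{2}+1)},
\end{equation*}
and the $t$-exponent consolidates by the algebraic identity
\begin{equation*}
  -\frac{\alpha+1}{b}-\frac{\alpha-\frac{b}{2}+1}{b(b-1)}=-\frac{(b-1)(\alpha+1)+\alpha-\frac{b}{2}+1}{b(b-1)}=-\frac{b(\alpha+\frac{1}{2})}{b(b-1)}=\frac{1}{b-1}\Big(-\alpha-\tfrac{1}{2}\Big),
\end{equation*}
which delivers exactly the second case.

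There is really no serious obstacle; the argument is a one-line rescaling followed by case-by-case bookkeeping of exponents. The only point that might warrant a sentence of care is ensuring that the bound of Proposition \ref{propIalphab} is symmetric in the sign of its argument (which it is, being a function of $1+|x|$), so that the ambiguity in $\mathrm{sgn}(t)$ and in the induced reflection $y\mapsto -y$ plays no role. With that remark, the scaling computation above completes the proof.
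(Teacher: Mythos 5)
Your proof is correct and follows the same rescaling strategy as the paper. In fact, your version is cleaner: the paper's one-line display writes
$\big|\int e^{i[t\xi^b+x\xi]}|\xi|^{\alpha}d\xi\big|=|t|^{-(\alpha+1)/b}\big|\int e^{i[\xi^b+x\xi]}|\xi|^{\alpha}d\xi\big|$,
which suppresses the rescaling of the spatial argument (the right-hand side should read $x/|t|^{1/b}$ in place of $x$), and it omits both the $\mathrm{sgn}(t)$ discussion and the exponent algebra that you carry out explicitly; your substitution, conjugation argument for negative $t$, and the consolidation of the $t$-exponents are all accurate, so your write-up supplies the details the paper leaves implicit.
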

\begin{proof}
  It suffices to write 
  $$\Big|\int_{\R} e^{i[t\xi^b+x\xi]}|\xi|^{\alpha}d\xi\Big|=|t|^{-\frac{\alpha+1}{b}}\Big|\int_{\R} e^{i[\xi^b+x\xi]}|\xi|^{\alpha}d\xi\Big|$$
  and apply the Proposition \ref{propIalphab}.
\end{proof}
Hence, as a result of Corollary \ref{remark:osc_int_time_decay}, we also obtain an extra decay in the temporal variable.
\begin{proof}[Proof of Proposition \ref{propIalphab}]
We start with showing that for $|x|\leq 100$,  we can bound $I^{b,\alpha}$ by a constant. We write 
\begin{equation*}
	\begin{aligned}
		 I^{b,\alpha}(x)=	\int_{|\xi|<100} +\int_{\xi\geq 100}+\int_{\xi\leq -100} e^{i[\xi^b+x\xi]}|\xi|^{\alpha}d\xi =I_1+I_2+I_3,
	\end{aligned}
\end{equation*}
and first estimate $I_{1}$ by the following:
\begin{equation}\label{ibpn0}
	\begin{aligned}
		|I_1|\leq \int_{|\xi|<100} |\xi|^{\alpha}d\xi =2\frac{ 100^{1+\alpha}}{1+\alpha}.
	\end{aligned}
\end{equation}
Then, integration by parts applied to
\begin{equation*}
	\begin{aligned}
		I_2=	\int_{\xi\geq 100}e^{i[\xi^b+x\xi]}i(b\xi^{b-1}+x)\frac{\xi^{\alpha}}{i(b\xi^{b-1}+x)}d\xi
	\end{aligned}
\end{equation*}
yields
\begin{equation}\label{ibpp2}
	\begin{aligned}
		|I_2|  \leq  \Big|\frac{10^{2\alpha}}{2\cdot10^2+x}\Big|+\int_{100}^{\infty}\Big|\frac{\alpha\xi^{\alpha-1}}{b\xi^{b-1}+x}\Big|+\Big|\frac{b(b-1)\xi^{\alpha+b-2}}{(b\xi^{b-1}+x)^2} \Big|d\xi.
	\end{aligned}
\end{equation}
Since $\xi\geq 100$ and $x\geq -100$, we have $b\xi^{b-1}+x\geq \xi^{b-1} \geq 100$, which allows us to conclude
\begin{equation}\label{ibpn3}
	\begin{aligned}
		\leq  10^{2\alpha-2}+\int_{100}^{\infty}|\alpha|\xi^{\alpha-b}+\Big|\frac{b(b-1)\xi^{\alpha+b-2}}{\xi^{2b-2}} \Big|d\xi  \leq  {10^{2\alpha-2}}-(|\alpha|+b^2)\frac{100^{\alpha-b+1}}{\alpha-b+1}.
	\end{aligned}
\end{equation}
By a change of variables, we write
\begin{equation*}
	\begin{aligned}
		I_3=	\int_{\xi\geq 100}e^{i[(-1)^b\xi^{b}-x\xi]}i[b(-1)^b\xi^{b-1}-x]\frac{\xi^{\alpha}}{i[b(-1)^b\xi^{b-1}-x]}  d\xi
	\end{aligned}
\end{equation*}
for which we apply integration by parts to have:
\begin{equation}\label{ibpn1}
	\begin{aligned}
		|I_3| &\leq  10^{2\alpha-2}+\int_{100}^{\infty}(|\alpha|+b^2)\xi^{\alpha-b} d\xi  \leq  10^{2\alpha-2}-(|\alpha|+b^2)\frac{100^{\alpha-b+1}}{\alpha-b+1}.
	\end{aligned}
\end{equation}	
Therefore, by \eqref{ibpn0}--\eqref{ibpn1}, we have 
\begin{align}\label{|x|<100}
    |I^{b,\alpha}(x)|\leq C(\alpha, b),\quad \text{for}\,\,|x|\leq 100.
\end{align}
Next, we consider the case $|x|\geq 100$. There are three main types of contributions that come into this oscillatory integral. The first comes from the origin, which is a singularity of $|\xi|^{\alpha},\alpha<0.$ The second is the stationary points, that is, the set of points
\begin{align}\label{stationarypts}
  S_x^b=\{\xi\in\R:\partial_{\xi}[\xi^b+x\xi]=0\},\quad x\in\mathbb{R}.  
\end{align}
 If $b$ is odd, then  $S_x^b=\{\pm(-x/b)^{\frac{1}{b-1}}\}$ for $\,x\in\mathbb{R}_-$; while if $b$ is even, then for any $x\in\R$ the set   $S_x$  contains only $\xi=(-x/b)^{\frac{1}{b-1}}$. The final contribution comes from  $\xi$  large in absolute value. With a view towards this, we divide our integral into three pieces
\begin{equation}\label{J123}
	\begin{aligned}
		I^{b,\alpha}(x)=	\int_{|\xi|<\frac{|x|^{\frac{1}{b-1}}}{2b}} +\int_{\frac{|x|^{\frac{1}{b-1}}}{2b}\leq |\xi| \leq |x|^{\frac{1}{b-1}}}+\int_{|\xi|>{|x|^{\frac{1}{b-1}}}} e^{i[\xi^3+x\xi]}|\xi|^{\alpha}d\xi =I_1^*+I_2^*+I_3^*.
	\end{aligned}
\end{equation}	
We start with $I_1^*$. As the associated interval of integration contains a singularity, the usual theorems on stationary/nonstationary phase, like the van der Corput lemma (Lemma \ref{lemma:van der Corput}), do not apply. As such, we subdivide this interval into three: 	
\begin{equation}\label{J1}
	\begin{aligned}
		I_1^*=	\int_{|\xi|\leq \frac{1}{b|x|}}+	\int_{\frac{1}{b|x|}<\xi<\frac{|x|^{\frac{1}{b-1}}}{2b}}+	\int_{-\frac{|x|^{\frac{1}{b-1}}}{2b}<\xi< -\frac{1}{b|x|} } e^{i[\xi^b+x\xi]}|\xi|^{\alpha}d\xi =I_{11}^*+I_{12}^*+I_{13}^*.
	\end{aligned}
\end{equation}
Consider $I_{11}^*$ first. In this interval of integration, as $\xi^b \ll x\xi<1$, there is no significant oscillation that leads to cancellation,  so we may directly bring the absolute value inside without much loss:
\begin{equation*}
	\begin{aligned}
		|I_{11}^*|	 \leq \int_{|\xi|\leq 1/b|x|} |\xi|^{\alpha}d\xi =2  \int_{0}^{ 1/b|x|} \xi^{\alpha}d\xi=\frac{2}{(\alpha+1)b^{\alpha+1}}|x|^{-\alpha-1}.
	\end{aligned}
\end{equation*}
Now for $I_{12}^*$, there is neither a singularity nor a stationary point. So we expect a good cancellation. Integration by parts yields
\begin{equation*}\label{}
	\begin{aligned}
		I_{12}^*&= \frac{\xi^{\alpha}e^{i[\xi^b+x\xi]}}{i(b\xi^{b-1}+x)}\Big|^{\frac{|x|^{\frac{1}{b-1}}}{2b}}_{1/b|x|}-\frac{1}{i}\int_{1/b|x|}^{\frac{|x|^{\frac{1}{b-1}}}{2b}}e^{i[\xi^b+x\xi]}\Big[\frac{\alpha\xi^{\alpha-1}}{b\xi^{b-1}+x}-\frac{b(b-1)\xi^{\alpha+b-2}}{(b\xi^{b-1}+x)^2} \Big]d\xi.
	\end{aligned}
\end{equation*}
We observe that in the interval $[1/b|x|,{|x|^{\frac{1}{b-1}}}/{2b}]$, we have $|b\xi^{b-1}+x|\geq |x|/2$. Therefore
\begin{equation*}
	\begin{aligned}
		| I_{12}^*| \leq&   \frac{2|x|^{\frac{\alpha}{b-1}}}{(2b)^{\alpha}|x|}+\frac{2|x|^{-\alpha}}{b^{\alpha}|x|}+\int_{1/b|x|}^{\frac{|x|^{\frac{1}{b-1}}}{2b}}\Big[\frac{2\alpha\xi^{\alpha-1}}{|x|}+\frac{4b(b-1)\xi^{\alpha+b-2}}{|x|^2} \Big]d\xi  \\ 
        \leq& 2^{1-\alpha}b^{-\alpha}|x|^{\frac{\alpha}{b-1}-1} +2b^{-\alpha}|x|^{-\alpha-1}+\frac{2}{|x|}\Big( \frac{|x|^{\frac{\alpha}{b-1}}}{(2b)^{\alpha}}+b^{-\alpha}|x|^{-\alpha}   \Big) \\
        &+\frac{4b(b-1)}{| \alpha +b-1||x|^2}\Big( \frac{|x|^{\frac{\alpha+b-1}{b-1}}}{(2b)^{\alpha+b-1}}+(b|x|)^{-\alpha-b+1}  \Big)\\ 
               \lesssim_{\alpha,b}&  |x|^{\frac{\alpha}{b-1}-1} +|x|^{-\alpha-1}.
	\end{aligned}
\end{equation*}
The estimate for $I_{13}^*$ is similar to that of $I_{12}^*$. Indeed, changing variables and implementing an integration by parts to $I_{13}^*$, one can get	
\begin{equation*}\label{}
	\begin{aligned}
		|I_{13}^*| \lesssim_{\alpha,b}     |x|^{\frac{\alpha}{b-1}-1} +|x|^{-\alpha-1}.  
	\end{aligned}
\end{equation*}
Next, we turn our attention to the term $I_2^*$ and decompose it into two:
\begin{equation*}
	\begin{aligned}
		I_2^*=\int_{\frac{|x|^{\frac{1}{b-1}}}{2b}} ^{ |x|^{\frac{1}{b-1}}}+ \int^{-\frac{|x|^{\frac{1}{b-1}}}{2b}} _{ -|x|^{\frac{1}{b-1}}} e^{i[\xi^b+x\xi]}|\xi|^{\alpha}d\xi=I_{21}^*+I_{22}^*.
	\end{aligned}
\end{equation*}
Regarding $I_{21}^*$, we take account of the case $x>0$ first. In this case, as we have discussed: $$S_x^b\cap [x^{\frac{1}{b-1}}/2b, x^{\frac{1}{b-1}}]= \emptyset,$$ where $S_x^b$ is as in \eqref{stationarypts}, thus applying integration by parts as before yields
\begin{equation*}\label{}
	\begin{aligned}
		|I_{21}^*| \lesssim_{\alpha,b}     |x|^{\frac{\alpha}{b-1}-1}.  
	\end{aligned}
\end{equation*}
 On the other hand, in the case $x< 0$, $b\xi^{b-1}+x=0$ can occur within the range under consideration. We shall define $\phi_x(\xi)=\xi^b|x|^{-\frac{b-2}{b-1}}-|x|^{\frac{1}{b-1}}\xi$ so that 
\begin{equation*}
	\begin{aligned}
		I_{21}^*=\int_{\frac{|x|^{\frac{1}{b-1}}}{2b}} ^{ |x|^{\frac{1}{b-1}}}e^{i|x|^{\frac{b-2}{b-1}}\phi_x(\xi)}\xi^{\alpha}d\xi.
	\end{aligned}
\end{equation*}
Note that $\phi'_x(\xi)=b\xi^{b-1}|x|^{-\frac{b-2}{b-1}}-|x|^{\frac{1}{b-1}}$ and $\phi''_x(\xi)=b(b-1)\xi^{b-2}|x|^{-\frac{b-2}{b-1}}$. Then, we have $\phi''_x(\xi)\geq \frac{b(b-1)}{(2b)^{b-2}}$ for $\xi\in[|x|^{\frac{1}{b-1}}/2b, |x|^{\frac{1}{b-1}}]$ 
We shall apply Lemma \ref{lemma:van der Corput} by letting $f(\xi)=\xi^{\alpha}$. First, note that
\begin{align*}
  \|f\|_{L^{\infty}[{\frac{|x|^{\frac{1}{b-1}}}{2b}} ,{ |x|^{\frac{1}{b-1}}}]}=\begin{cases}
   |x|^{\alpha/(b-1)}/(2b)^{\alpha}\quad&\text{if}\,\,\alpha \leq 0, \\   |x|^{\alpha/(b-1)}  \quad&\text{if}\,\,\alpha>0, 
  \end{cases}  
\end{align*}
and 
\begin{equation*}
	\begin{aligned}
		\|f'\|_{L^{1}[{\frac{|x|^{\frac{1}{b-1}}}{2b}} ,{ |x|^{\frac{1}{b-1}}}]} =|1-(2b)^{-\alpha}||x|^{\frac{\alpha}{b-1}}.
	\end{aligned}
\end{equation*}
Thus, by van der Corput lemma
\begin{equation*}
	\begin{aligned}
		|	I_{21}^*|\leq \Big| \int_{\frac{|x|^{\frac{1}{b-1}}}{2b}}^{ |x|^{\frac{1}{b-1}}}e^{{i|x|^{\frac{b-2}{b-1}}}\phi_x(\xi)}\xi^{\alpha}d\xi  \Big| \leq C_b |x|^{-\frac{b-2}{2b-2}}(\|f\|_{L^{\infty}}+\|f'\|_{L^{1}})=2C_b(1+(2b)^{-\alpha})|x|^{-\frac{b-2}{2b-2}+\frac{\alpha}{b-1}},
    \end{aligned}
\end{equation*}
where $C_b$ is independent of $\phi_x$, hence $x$, so depends only on $b$. Next, we turn to the estimation of $I_{22}^*$ and write
\begin{equation*}
	\begin{aligned}
		I_{22}^*=\int_{\frac{|x|^{\frac{1}{b-1}}}{2b}}^{ |x|^{\frac{1}{b-1}}} e^{i[(-1)^b\xi^b-x\xi]}\xi^{\alpha}d\xi.
	\end{aligned}
\end{equation*}	
According to the parity of $b$ and the sign of $x$, there are four situations, but they can be handled in two cases. First, we assume $b$ odd and $x>0$ or $b$ even and $x<0$. In these two situations, stationary points do not exist, hence we apply integration by parts to arrive at
\begin{equation*}\label{}
	\begin{aligned}
		|I_{22}^*|	\lesssim_{\alpha,b}  |x|^{\frac{\alpha}{b-1}-1}.
	\end{aligned}
\end{equation*}	
Secondly, we assume $b$ odd and $x<0$, or $b$ even and $x>0$. Then $b(-1)^b\xi^{b-1}-x=0$ is possible. Let $\psi_x(\xi)=(-\xi)^{b}|x|^{-\frac{b-2}{b-1}}+(-1)^{b-1}|x|^{\frac{1}{b-1}}\xi$. Then $\psi'_x(\xi)=-b(-\xi)^{b-1}|x|^{-\frac{b-2}{b-1}}+(-1)^{b-1}|x|^{\frac{1}{b-1}}$ and $\psi''_x(\xi)=b(b-1)(-\xi)^{b-2}|x|^{-\frac{b-2}{b-1}}$. We have $|\psi''_x(\xi)|\geq \frac{b(b-1)}{(2b)^{b-2}}$ on the integration interval of $I_{22}^*$. Thus by Lemma \ref{lemma:van der Corput} 
\begin{equation*}
	\begin{aligned}
		|	I_{22}^*|\leq \Big|\int_{\frac{|x|^{\frac{1}{b-1}}}{2b}} ^{|x|^{\frac{1}{b-1}}} e^{i|x|^{\frac{b-2}{b-1}}\psi_x(\xi)}\xi^{\alpha}d\xi \Big| \leq C_b |x|^{-\frac{b-2}{2b-2}}(\|f\|_{L^{\infty}}+\|f\|_{L^{1}})=2C_b(1+(2b)^{-\alpha})|x|^{-\frac{b-2}{2b-2}+\frac{\alpha}{b-1}}.
	\end{aligned}
\end{equation*}
It remains to consider $I_3^*$. Divide this integral into two parts by
\begin{equation}\label{J3}
	\begin{aligned}
		I_3^*=\int_{\xi>|x|^{\frac{1}{b-1}}}+\int_{\xi<-|x|^{\frac{1}{b-1}}} e^{i[\xi^b+x\xi]}|\xi|^{\alpha}d\xi =I_{31}^*+I_{32}^*.
	\end{aligned}
\end{equation}
Note that over these intervals, there is no stationary point. Thus, as before, employing integration by parts together with lower bounds on the size of the  phase yields
\begin{equation*}\label{}
	\begin{aligned}
		|I_{31}^*|,|I_{32}^*|\lesssim_{\alpha,b} |x|^{\frac{\alpha}{b-1}-1}.
	\end{aligned}
\end{equation*}

Combining all of the above estimates for \eqref{J123} with \eqref{J1}--\eqref{J3}, we obtain 
\begin{equation}\label{|x|>100}
	|I^{b,\alpha}(x)|\leq C_{\alpha,b}\Big[ |x|^{\frac{\alpha}{b-1}-\frac{b-2}{2b-2}}+  |x|^{\frac{\alpha}{b-1}-1}+ |x|^{-\alpha-1} \Big]\leq  C_{\alpha,b}\Big[ |x|^{\frac{\alpha}{b-1}-\frac{b-2}{2b-2}}+ |x|^{-\alpha-1} \Big],\quad \text{for}\,\,|x|\geq 100.
\end{equation}
As a result, combining \eqref{|x|<100} with \eqref{|x|>100} yields the claimed estimate.

\end{proof}

\subsection{The fractional operator}
With exactly the same arguments, we can obtain analogous results for the operator $J^{\alpha,b}.$ Indeed, these results are easier to obtain because of the symmetry
\begin{equation*}	
    \begin{aligned}
		\int_{E} e^{i[|\xi|^b+x\xi]}|\xi|^{\alpha}d\xi =\int_{-E} e^{i[|\xi|^b-x\xi]}|\xi|^{\alpha}d\xi,
	\end{aligned}	
\end{equation*}
where $E$ is a measurable subset of the negative numbers. We have
\begin{proposition}\label{opJ}
	Let $\alpha\in  \R, b\in  \R$, and $b> 1$. Then
	\begin{equation*}
		\begin{aligned}
			|J^{b,\alpha}(x)|=\Big|\int_{\R} e^{i[|\xi|^b+x\xi]}|\xi|^{\alpha}d\xi\Big| \lesssim_{\alpha,b}\begin{cases} (1+|x|)^{-1-\alpha}  & \quad  -1<\alpha\leq  -1/2,  \\  (1+|x|)^{\frac{1}{b-1}(\alpha-\frac{b}{2}+1)}  & \quad  -1/2<\alpha<  b-1. \end{cases}
		\end{aligned}	
	\end{equation*}
\end{proposition}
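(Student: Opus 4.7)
The plan is to reduce the analysis of $J^{b,\alpha}(x)$ to that of $I^{b,\alpha}$ already carried out, by exploiting the parity of the phase $|\xi|^b$. First, I would split the integral at the origin and apply the substitution $\xi\mapsto -\xi$ on the negative half-line to get
\begin{equation*}
    J^{b,\alpha}(x)=\int_{0}^{\infty}e^{i(\xi^{b}+x\xi)}\xi^{\alpha}\,d\xi+\int_{0}^{\infty}e^{i(\xi^{b}-x\xi)}\xi^{\alpha}\,d\xi=:J_{+}(x)+J_{-}(x).
\end{equation*}
The same substitution shows that $J^{b,\alpha}(-x)=J^{b,\alpha}(x)$, so it suffices to bound the integral for $x\geq 0$. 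This parity also shows that only $J_-$ can develop a stationary point (at $\xi_0=(x/b)^{1/(b-1)}$, the unique positive solution of $b\xi^{b-1}=x$), while $J_+$ is stationary-point-free on $(0,\infty)$ since $b\xi^{b-1}+x>0$ there.

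For $|x|\leq 100$, I would repeat verbatim the argument given for $I^{b,\alpha}$ on $|x|\leq 100$: split each of $J_\pm$ into the region $\{\xi<100\}$ (controlled trivially by $\int_0^{100}\xi^\alpha\,d\xi$, using $\alpha>-1$) and $\{\xi\geq 100\}$ (controlled by one integration by parts using $b\xi^{b-1}\pm x\geq \xi^{b-1}\geq 100$, using $\alpha<b-1$). Nothing in these estimates uses the integrality of $b$, so they remain valid for $b>1$ real. This yields $|J^{b,\alpha}(x)|\leq C(\alpha,b)$ in this range.

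For $|x|\geq 100$, I would decompose $J_+$ into the three regions $\{0<\xi<\frac{1}{bx}\}\cup\{\frac{1}{bx}<\xi<\frac{x^{1/(b-1)}}{2b}\}$, $\{\frac{x^{1/(b-1)}}{2b}<\xi<x^{1/(b-1)}\}$, $\{\xi>x^{1/(b-1)}\}$ exactly as in the proof of Proposition \ref{propIalphab}, and apply a trivial estimate on the first piece and integration by parts (using that $b\xi^{b-1}+x$ has a good lower bound throughout) on the others. This yields the contributions $|x|^{-\alpha-1}$ and $|x|^{\alpha/(b-1)-1}$. For $J_-$, which is where the stationary point lives, I would use the identical three-piece decomposition. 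The non-stationary pieces again fall to integration by parts. On the middle piece containing $\xi_0$, the van der Corput lemma (Lemma \ref{lemma:van der Corput}), applied with $\phi_x(\xi)=\xi^b|x|^{-(b-2)/(b-1)}-|x|^{1/(b-1)}\xi$, $f(\xi)=\xi^\alpha$, and using the lower bound $|\phi_x''(\xi)|\geq \frac{b(b-1)}{(2b)^{b-2}}$, gives the bound $|x|^{\alpha/(b-1)-(b-2)/(2b-2)}$ — exactly as in the treatment of $I_{21}^*,I_{22}^*$ there.

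Combining the contributions from $J_+$ and $J_-$ in the respective ranges $-1<\alpha\leq -1/2$ and $-1/2<\alpha<b-1$, the same piecewise bound as in Proposition \ref{propIalphab} emerges. There is no genuine obstacle here: the proof is essentially a transcription of the earlier argument, made cleaner by the fact that after the $\xi\mapsto -\xi$ reduction we only need to handle positive $\xi$ and the integer-parity case analysis of $b$ (odd/even) in the treatment of $I_2^*$ disappears entirely. The only point requiring a little care is that $b$ is now a real parameter, so constants depending on $b$ must be tracked as $b\to 1^+$ or $b\to\infty$, but for any fixed $b>1$ the bounds go through with the stated constants $C_{\alpha,b}$.
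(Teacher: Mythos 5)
Your overall strategy mirrors the paper's: reduce to the half-line $\xi>0$ by the evenness of $|\xi|^b$ (exactly the symmetry the paper highlights), then run the same integration-by-parts and van der Corput decomposition as in Proposition~\ref{propIalphab}. That part is right, and the observation that only $J_-$ can host a stationary point is a clean simplification.

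However, the claim that ``nothing in these estimates uses the integrality of $b$, so they remain valid for $b>1$ real'' is incorrect in a way that matters for $1<b<2$. Two of the key inequalities in the $I^{b,\alpha}$ proof really do use $b\geq 2$. First, in the region $\{|x|\leq 100,\ \xi\geq 100\}$ the lower bound $b\xi^{b-1}\pm x\geq \xi^{b-1}\geq 100$ requires $(b-1)\xi^{b-1}\geq 100$ and $\xi^{b-1}\geq 100$, both of which fail for $b$ near $1$. Concretely, for $J_-$ with $x=100$ and $b=1.1$ the phase derivative $b\xi^{b-1}-x$ is already negative at $\xi=100$ and does not vanish until $\xi_0=(x/b)^{1/(b-1)}\approx 4\times 10^{19}$: the integration-by-parts argument on $\xi\geq 100$ then passes straight through a stationary point. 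Second, in the region $\{|x|\geq 100\}$ the stationary point $\xi_0=(|x|/b)^{1/(b-1)}=|x|^{1/(b-1)}b^{-1/(b-1)}$ lies in the proposed middle window $[|x|^{1/(b-1)}/(2b),\,|x|^{1/(b-1)}]$ only when $2b\geq b^{1/(b-1)}$, i.e.\ only for $b\gtrsim 1.22$. For smaller $b$ the stationary point lands in the first window, where you are doing integration by parts under the (now false) assumption $|b\xi^{b-1}-x|\geq |x|/2$, and the non-stationary-phase bound collapses.

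The fix is to abandon the $b\geq 2$-tuned cutoffs and center the decomposition on the stationary point itself: take $A_x=\xi_0/2$ and $B_x=2\xi_0$ (or any fixed multiples $c\xi_0<\xi_0<C\xi_0$). On $[0,A_x]$ one gets $x-b\xi^{b-1}\geq x(1-2^{-(b-1)})$, on $[B_x,\infty)$ one gets $b\xi^{b-1}-x\geq x(2^{b-1}-1)$, and on $[A_x,B_x]$ van der Corput applies with $\phi_x''(\xi)=b(b-1)\xi^{b-2}|x|^{-(b-2)/(b-1)}\gtrsim_b 1$ regardless of the sign of $b-2$. The singularity at the origin is handled as before on $\{\xi\leq 1/(b|x|)\}$, and one then checks that $\xi_0$ is automatically to the right of this window for $|x|\geq 100$. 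The exponents that emerge are unchanged; only the intermediate constants acquire factors like $2^{b-1}-1$, which degrade (but remain positive) as $b\to 1^+$. So the estimate as stated does follow, but not by a verbatim transcription of the $b\geq 2$ argument: the decomposition itself must be rebalanced for $1<b<2$.
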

The range of $\alpha$ is sharp here as well, by the examples given in the previous section. 
We scale this estimate to obtain the following corollary: 
\begin{Corollary}
	With the same $\alpha$ and $b$ as in Proposition \ref{opJ} , we have
	\begin{equation*}
		\begin{aligned}
			\Big|\int_{\R} e^{i[t|\xi|^b+x\xi]}|\xi|^{\alpha}d\xi\Big| &\lesssim_{\alpha} \begin{cases}
				(|t|^{1/b}+|x|)^{-\alpha-1} \quad  &   \quad  -1<\alpha\leq -1/2,  \\ 	(|t|^{1/b}+|x|)^{\frac{1}{b-1}(\alpha-\frac{b}{2}+1   )}|t|^{\frac{1}{b-1}(-\alpha-\frac{1}{2})} \quad  &   \quad  -1/2<\alpha<b-1.
			\end{cases}
		\end{aligned}
	\end{equation*}
\end{Corollary}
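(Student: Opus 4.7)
The plan is to derive the estimate from Proposition \ref{opJ} by a rescaling in the frequency variable, in direct parallel with the corollary following Proposition \ref{propIalphab}.

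First, I would reduce to the case $t>0$. For $t<0$, taking the complex conjugate of the integrand and then substituting $\xi\mapsto-\xi$ shows
\begin{equation*}
\Big|\int_{\R} e^{i[t|\xi|^b+x\xi]}|\xi|^{\alpha}d\xi\Big|=\Big|\int_{\R} e^{i[|t||\xi|^b+x\xi]}|\xi|^{\alpha}d\xi\Big|,
\end{equation*}
so only $|t|$ enters the bound and we may assume $t>0$.

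Second, I would perform the change of variables $\eta=t^{1/b}\xi$. Since $d\xi=t^{-1/b}d\eta$, $|\xi|^{\alpha}=t^{-\alpha/b}|\eta|^{\alpha}$, $t|\xi|^b=|\eta|^b$, and $x\xi=(xt^{-1/b})\eta$, this gives the scaling identity
\begin{equation*}
\int_{\R} e^{i[t|\xi|^b+x\xi]}|\xi|^{\alpha}d\xi = t^{-(\alpha+1)/b}\int_{\R} e^{i[|\eta|^b+(xt^{-1/b})\eta]}|\eta|^{\alpha}d\eta.
\end{equation*}
Applying Proposition \ref{opJ} to the right-hand integral with spatial parameter $xt^{-1/b}$ yields bounds of the form $C_{\alpha,b}(1+|x|t^{-1/b})^{-1-\alpha}$ in the range $-1<\alpha\leq -1/2$, and $C_{\alpha,b}(1+|x|t^{-1/b})^{\frac{1}{b-1}(\alpha-b/2+1)}$ in the range $-1/2<\alpha<b-1$.

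Third, I would use the elementary identity $1+|x|t^{-1/b}=t^{-1/b}(t^{1/b}+|x|)$, valid for every $t>0$, to rewrite these in terms of $t^{1/b}+|x|$ and collect the factors of $t$. In the first range the $t$-powers cancel exactly, giving $(t^{1/b}+|x|)^{-\alpha-1}$. In the second range the bookkeeping reduces, via the identity $(\alpha+1)(b-1)+(\alpha-b/2+1)=b(\alpha+1/2)$, to the net exponent $\frac{1}{b-1}(-\alpha-1/2)$ for $t$, exactly matching the stated estimate.

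There is no real obstacle: the proof is a clean scaling argument, and the only slightly delicate point is the arithmetic of $t$-exponents in the final step. Proposition \ref{opJ} does all the analytical work; the corollary is essentially a dimensional-analysis consequence.
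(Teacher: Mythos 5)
Your proof is correct and follows essentially the same scaling argument the paper uses for this corollary (and for the analogous Corollary \ref{remark:osc_int_time_decay}), just with the change of variables $\eta = t^{1/b}\xi$ and the resulting arithmetic written out explicitly. The reduction to $t>0$, the scaling identity, and the exponent bookkeeping via $1+|x|t^{-1/b}=t^{-1/b}(t^{1/b}+|x|)$ all check out.
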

\vspace{0.2cm}
\noindent
\subsection{Proof of Proposition \ref{mainprop}} We start by considering the following case.
 \vspace{0.2cm}

\noindent
\textbf{Case 1: $p\geq q\geq 2$.} We start by applying the law of the iterated logarithm (see \cite[Theorem 8.8.3]{Durrett_Prob_Book}) to $z_{*}(t)$:
\begin{equation}\label{LIL}
	|D^{\alpha}_xz_*(t,x)|=	\Big|	\int_{t}^{\infty}V(t-r)D^{\alpha}_x\phi(x)g(r)dB(r)  \Big|  \leq  C_{\epsilon}\Big(\int_{t}^{\infty}|V(t-r)D^{\alpha}_x\phi(x)|^2g^2(r)dr  \Big)^{1/2-\epsilon},
\end{equation}
 for small $\epsilon >0$.  Thus, $2(\frac{1}{2}-\epsilon)< 1$, and  this  preclude the use of interpolation. Therefore, by the Minkowski integral inequality (as $p\geq q$) and \eqref{LIL}, we have
 \begin{equation*}
	\|D^{\alpha}z_*\|_{L^p_xL^q_t ([T,\infty))}\leq 	\|D^{\alpha}z_*\|_{L^q_tL^p_x ([T,\infty))}\leq  C_{\epsilon}\Big\|\Big[\int_{\R}\Big(\int_{t}^{\infty}|V(t-r)D^{\alpha}_x\phi(x)|^2g^2(r)dr   \Big)^{\frac{p}{2}-p\epsilon}dx\Big]^{\frac{1}{p}}\Big\|_{L^q_t ([T,\infty))} .
\end{equation*}
As $p\geq 4$ in   Kato triples, we can apply the Minkowski integral inequality again to get 
\begin{equation}\label{lqnorm}
    \begin{aligned}
    \|D^{\alpha}z_*\|_{L^p_xL^q_t ([T,\infty))}&\leq  C_{\epsilon}\Big\|\Big[\int_{t}^{\infty}\|V(t-r)D^{\alpha}_x\phi\|_{L^{p-}_x}^{2}|g(r)|^{2}dr\Big]^{\frac{1}{2}-}\Big\|_{L^q_t ([T,\infty))}.
    \end{aligned}
\end{equation}
To proceed further, we shall analyze the integral inside the $L^q_t$-norm on the right side of \eqref{lqnorm}. For this purpose, we note the dispersive inequality
\begin{equation}\label{dispereq}
	\|V(t-r)D^{\alpha}_x\phi\|_{L^{p-}_x} \leq C(t-r)^{-\frac{1}{3}(1-\frac{2}{p-})}\|D^{\alpha}_x\phi \|_{L^{(p-)'}_x}
\end{equation}
and the inequality (Hausdorff-Young inequality)
\begin{equation}\label{HY}
	\|V(t-r)D^{\alpha}_x\phi\|_{L^{p-}_x} \leq \|\widehat{D^{\alpha}_x\phi} \|_{L^{(p-)'}_x}.
\end{equation}
As $t\geq T\gg 1$, we write
\begin{equation*}
	\begin{aligned}
		\int_{t}^{\infty}\|V(t-r)D^{\alpha}_x\phi\|_{L^{p-}_x}^{2}|g(r)|^{2}dr&=\int_{t}^{t+1}+\int_{t+1}^{2t}+\int_{2t}^{\infty}\|V(t-r)D^{\alpha}_x\phi\|_{L^{p-}_x}^{2}|g(r)|^{2}dr\\ &=I_{1}+I_{2}+I_{3}.
	\end{aligned}
\end{equation*}	
Applying \eqref{HY} to the first integral gives
\begin{equation*}
	\begin{aligned}
		I_{1}\lesssim \int_{t}^{t+1} \|\widehat{D^{\alpha}_x\phi} \|_{L^{(p-)'}_x}^2t^{-2\gamma}dr \leq   t^{-2\gamma}\|\widehat{D^{\alpha}_x\phi} \|_{L^{(p-)'}_x}^2.
	\end{aligned}
\end{equation*}	
For the second, the inequality in \eqref{dispereq} implies
\begin{equation*}
	\begin{aligned}
		I_{2}\lesssim \int_{t+1}^{2t} |r-t|^{-\frac{2}{3}(1-\frac{2}{p-})}\|{D^{\alpha}_x\phi} \|_{L^{(p-)'}_x}^2t^{-2\gamma}dr &\leq   t^{-2\gamma}\|{D^{\alpha}_x\phi} \|_{L^{(p-)'}_x}^2\int_{t+1}^{2t} |r-t|^{-\frac{2}{3}(1-\frac{2}{p-})}dr  \\  &\leq   t^{-2\gamma}\|{D^{\alpha}_x\phi} \|_{L^{(p-)'}_x}^2\int_{1}^{t} r^{-\frac{2}{3}(1-\frac{2}{p-})}dr\\  &\lesssim t^{-2\gamma-\frac{2}{3}(1-\frac{2}{p-})+1}\|{D^{\alpha}_x\phi} \|_{L^{(p-)'}_x}^2  .
	\end{aligned}
\end{equation*}	
Lastly, by \eqref{dispereq}, 
\begin{equation}\label{newref1}
	\begin{aligned}
		I_{3}\lesssim \int_{2t}^{\infty} r^{-\frac{2}{3}(1-\frac{2}{p-})}\|{D^{\alpha}_x\phi} \|_{L^{(p-)'}_x}^2r^{-2\gamma}dr &\leq   \|{D^{\alpha}_x\phi} \|_{L^{(p-)'}_x}^2\int_{2t}^{\infty} r^{-\frac{2}{3}(1-\frac{2}{p-})-2\gamma}dr \\ &\lesssim t^{-\frac{2}{3}(1-\frac{2}{p-})-2\gamma+1}\|{D^{\alpha}_x\phi} \|_{L^{(p-)'}_x}^2.
	\end{aligned}
\end{equation}	
 provided that we have
\begin{equation}\label{as1}
	\frac{2}{3}\Big(1-\frac{2}{p-}\Big)+2\gamma>1 \iff 2\gamma>\frac{4}{3p-}+\frac{1}{3}
\end{equation}
for the finiteness of the last integral in \eqref{newref1}. For  $L^q_t$ integrability in \eqref{lqnorm} we further need
\begin{equation}\label{as2}
	\frac{q}{2}\big(-\frac{2}{3}(1-\frac{2}{p-})-2\gamma+1\big)<-1\iff   \frac{2}{3}(1-\frac{2}{p-})+2\gamma-1> \frac{2}{q} \iff  2\gamma> \frac{2}{q}-\frac{4}{3p-}+\frac{1}{3}.
\end{equation}
For the first condition \eqref{as1}, given that for Kato triples $p\geq q$ implies $6\leq p\leq \infty$, we need $\gamma>\frac{5}{18}$. As for the second condition \eqref{as2}, the worst case is $(p,q)=(\infty,2)$. Even then, $\gamma>2/3$ is enough to obtain \eqref{kato_type_est_tail}.

\vspace{0.2cm}
\noindent
\textbf{Case 2: $2\leq p<q$.} 
By the law of the iterated logarithm, we have
\begin{equation}\label{estV}
	\begin{aligned}
		\Big|\int_{t}^{\infty}V(t-r)D^{\alpha}_x\phi(x)g(r)dB(r)  \Big|&=\Big |\int_{t}^{\infty}\Big[\int_{\R}e^{i[(t-r)\xi^3+x\xi]}|\xi|^{\alpha}\widehat \phi(\xi)g(r)d\xi\Big]dB(r)  \Big| \\& \leq C_{\epsilon}\Big (\int_{t}^{\infty}\Big|\int_{\R}e^{i[(t-r)\xi^3+x\xi]}|\xi|^{\alpha}\widehat \phi(\xi)g(r)d\xi\Big|^2dr  \Big)^{1/2-\epsilon} \\&=C_{\epsilon}\Big (\int_{t}^{\infty}\Big(\Big|\int_{\R} e^{i[(t-r)\xi^3+x\xi]}|\xi|^{\alpha}d\xi\Big| \ast \Big| \phi(x)g(r)\Big|\Big)^2dr  \Big)^{1/2-\epsilon},
	\end{aligned}	
\end{equation}
where in the last line above, for fixed $r,t$, we use the identity
\begin{equation*}
	\int_{\R}\widehat{u}(\xi)\widehat{v}(\xi)e^{ix\xi}d\xi=u\ast v (x)
\end{equation*}
with $\widehat{u}=e^{i[(t-r)\xi^3]}|\xi|^{\alpha}$ and $\widehat{v}(\xi)=\widehat{\phi}(\xi)g(r)$. Applying Corollary \ref{remark:osc_int_time_decay} with $b=3$,
\begin{equation*}
	\Big|\int_{\R} e^{i[(t-r)\xi^3+x\xi]}|\xi|^{\alpha}d\xi\Big|\lesssim_{\alpha} |t-r|^{-\frac{\alpha}{2}-\frac{1}{4}}|x|^{\frac{2\alpha-1}{4}},
\end{equation*}
and applying this together with \eqref{estV} yields
\begin{equation}\label{newref3}
	\begin{aligned}
		&\|D^{\alpha}_xz_*(t,x)\|_{L^p_xL^q_t([T,\infty))}\\&\leq  C_{\epsilon}\Big(\int_{\R}\Big[ \int_{T}^{\infty}\Big (\int_{t}^{\infty}\Big(\Big|\int_{\R} e^{i[(t-r)\xi^3+x\xi]}|\xi|^{\alpha}d\xi\Big| \ast \Big| \phi(x)g(r)\Big|\Big)^2dr  \Big)^{q(1/2-\epsilon)} dt\Big]^{\frac{p}{q}}dx\Big)^{\frac{1}{p}} \\ &\leq  C_{\epsilon}\Big(\int_{\R}\Big[ \int_{T}^{\infty}\Big (\int_{t}^{\infty}\Big( \big[|x|^{\frac{2\alpha-1}{4}} \ast|\phi(x) |\big]\big[| g(r)||t-r|^{-\frac{2\alpha+1}{4}}\big]\Big)^2dr  \Big)^{q(1/2-\epsilon)} dt\Big]^{\frac{p}{q}}dx\Big)^{\frac{1}{p}} \\& =C_{\epsilon}\Big( \int_{T}^{\infty}\Big (\int_{t}^{\infty}| g(r)|^2|t-r|^{-\frac{2\alpha+1}{2}}dr  \Big)^{q(1/2-\epsilon)} dt\Big)^{\frac{1}{q}} \Big(\int_{\R} \Big[|x|^{\frac{2\alpha-1}{4}} \ast|\phi(x) |\Big]^{p(1-2\epsilon)}dx\Big)^{\frac{1}{p}}. 
	\end{aligned}	
\end{equation}
We observe that $|\cdot|^{\frac{2\alpha-1}{4}} \ast	\phi\sim (1+|\cdot|)^{\frac{2\alpha-1}{4}}$. As $p< q$, Kato triples satisfy 
\begin{equation}\label{newref2}
	\big(p,q,\alpha\big)=\Big(\frac{5}{1-\alpha},\frac{10}{4\alpha+1},\alpha\Big),\,\,\,\text{where}\,\,p\in [4,6),\,\,q\in (6,\infty],\,\,\alpha\in [-1/4,1/6).
\end{equation}
 Hence, integrability of $|\cdot|^{\frac{2\alpha-1}{4}} \ast|\phi(\cdot) |$ in $L^{p-}_{x}$ imposes the condition
\begin{equation}\label{palphacond}
	p\,\frac{2\alpha-1}{4}=	\frac{5}{1-\alpha} \frac{2\alpha-1}{4}<-1\quad \iff  \quad  \alpha<1/6.
\end{equation}
But as we remarked in \eqref{newref2}, this follows from our assumption $p<q$.

To estimate the first integral in the last line of \eqref{newref3}, we write
\begin{equation*}
	\begin{aligned}
		\int_{t}^{\infty}|g(r)|^2|t-r|^{-\frac{2\alpha+1}{2}}dr\sim 	\int_{t}^{2t}+\int_{2t}^{\infty}r^{-2\gamma}|t-r|^{-\frac{2\alpha+1}{2}}dr= I+II.
	\end{aligned}	
\end{equation*}
Then 
\begin{equation*}
	\begin{aligned}
	I\lesssim	t^{-2\gamma}	\int_{t}^{2t}|t-r|^{-\frac{2\alpha+1}{2}}dr=t^{-2\gamma}	\int_{0}^{t}r^{-\frac{2\alpha+1}{2}}dr\sim t^{-2\gamma-\alpha+\frac{1}{2}}.
	\end{aligned}	
\end{equation*}
The last integral in this equation imposes the condition $\alpha<1/2$.  As for $II$
\begin{equation*}
	\begin{aligned}
	II=\int_{2t}^{\infty}r^{-2\gamma}|t-r|^{-\frac{2\alpha+1}{2}}dr\sim \int_{2t}^{\infty}r^{-2\gamma-\frac{2\alpha+1}{2}}dr\sim t^{-2\gamma-\alpha+\frac{1}{2}},
	\end{aligned}	
\end{equation*}
where the last integral demands the condition $4\gamma+2\alpha >1$. Finally, we need integrability of $t^{-2\gamma-\alpha+\frac{1}{2}}$ raised to power $\frac{q}{2}-$ on $[T,\infty)$. This leads to the condition
\begin{equation*}
\Big(-2\gamma-\alpha+\frac{1}{2}\Big)\frac{q}{2}<-1 \implies 4\gamma+2\alpha >1+\frac{4}{q}
\end{equation*}
which supersedes the above condition. For Kato triples $(p,q,\alpha)$ with $p\leq q$, these clearly hold. For Kato triples 
 \begin{equation*}
	\alpha=\frac{5}{2q}-\frac{1}{4} \implies 4\gamma >\frac{3}{2}-\frac{1}{q}
\end{equation*}
meaning even $\gamma>3/8$ is sufficient.

To sum up, for all Kato triples, our assumption $\gamma>2/3$ is sufficient to obtain the estimates claimed in the Proposition \ref{mainprop}.

\section{Scattering in $L^{2}_{x}(\mathbb{R})$ and $H_x^1(\mathbb{R})$}\label{section:scat_in_L2}
\subsection{Proof of Theorem \ref{thm:scat_in_L2}}\label{pfthm1.3}
Consider the random IVP \eqref{eq:differ_rand_and_det_soln_IVP} with $k=4$ and the Duhamel representation of solutions
\begin{equation}\label{duhamel_of_v_mass_crit}
    v(t)=\int_{T}^{t}V(t-t')\left[(( v + y + z_{*})^{5})_{x} - (y^{5})_{x}\right](t')dt'\quad t\in[T,\infty),
\end{equation}
where $T\gg 1$ is a large time parameter. Using \eqref{duhamel_of_v_mass_crit}, Proposition \ref{Kato_est_most_general} and Hölder inequality, we have
\begin{align*}
    \norm{v}_{L^{5}_{x}L^{10}_{t}([T,\infty))}\lesssim&\norm{\partial_{x}^{-1}[\partial_{x}((v+y+z_{*})^{5}-y^{5})]}_{L^{1}_{x}L^{2}_{t}([T,\infty))}\\
    \lesssim&C(T)(1+\norm{v}_{L^{5}_{x}L^{10}_{t}([T,\infty))}^{4})+\norm{v}_{L^{5}_{x}L^{10}_{t}([T,\infty))}^{5}\\
    \lesssim& C(T)+(1+C(T))\norm{v}_{L^{5}_{x}L^{10}_{t}([T,\infty))}^{5},
\end{align*}
where we set $C(T):=C(\norm{y}_{L^{5}_{x}L^{10}_{t}([T,\infty))},\norm{z_{*}}_{L^{5}_{x}L^{10}_{t}([T,\infty))}))$. Therefore, applying Lemma \ref{lemma:uniform_bounded_by_its_higher_powers} yields
\begin{equation}\label{decay_of_scattering_size_of_v}
    \norm{v}_{L^{5}_{x}L^{10}_{t}([T,\infty))}\lesssim C(T)\to 0\text{ as }T\to\infty,
\end{equation}
where the above limit holds $\omega$-a.s. thanks to the global-space-time bounds for $y(t)$ and $z_{*}(t)$ (in view of the results (\cite[Theorem 1.2]{Dodson_gKdV} for the deterministic solution $y(t)$ and Proposition \ref{mainprop} with the triple $(\alpha,p,q)=(0,5,10)$ for $z_{*}(t)$. In addition, we similarly have
\begin{equation}
\begin{aligned}\label{frstnormv}
    \sup\limits_{t\in[T,\infty)}\norm{v(t)}_{L^{2}_{x}}\lesssim&\norm{(v+y+z_{*})^{5}-y^{5}}_{L^{1}_{x}L^{2}_{t}([T,\infty))}\\
    \lesssim&\norm{\vert v+z_{*}\vert(\vert v+y+z_{*}\vert^{4}+\vert y\vert^{4})}_{L^{1}_{x}L^{2}_{t}([T,\infty))}\\
    \lesssim& C(T)(1+\norm{v}_{L^{5}_{x}L^{10}_{t}([T,\infty))}^{4})+\norm{v}_{L^{5}_{x}L^{10}_{t}([T,\infty))}^{5},
\end{aligned}
\end{equation}
and
\begin{equation}
\begin{aligned}\label{secondnormv}
\sup\limits_{x\in\mathbb{R}}\norm{\partial_{x}v(x)}_{L^{2}_{t}([T,\infty))}
    \lesssim C(T)(1+\norm{v}_{L^{5}_{x}L^{10}_{t}([T,\infty))}^{4})+\norm{v}_{L^{5}_{x}L^{10}_{t}([T,\infty))}^{5}.
\end{aligned}
\end{equation}
Thus, using the global boundedness of $L^{5}_{x}L^{10}_{t}$ norm of $v$ due to \eqref{decay_of_scattering_size_of_v}, the right sides of \eqref{frstnormv} and \eqref{secondnormv} tend to $0$ as $T\to\infty$.

It remains to show that $\{V(-t)u_{*}(t):\, t\in[T,\infty)\}$ is Cauchy in $L^{2}_{x}(\mathbb{R})$. Let $T<t_{1}<t_{2}<\infty$. Thus,
\begin{align*}
    \Vert V(-t_{2})u_{*}(t_{2})-V(-t_{1})u_{*}(t_{1})\Vert_{L^{2}_{x}}&\leq\Vert V(-t_{2})(u_{*}(t_{2})-y(t_{2}))\Vert_{L^{2}_{x}}+\Vert V(-t_{2})y(t_{2})-V(-t_{1})y(t_{1})\Vert_{L^{2}_{x}}\\
    &\quad+\Vert V(-t_{1})(u_{*}(t_{1})-y(t_{1}))\Vert_{L^{2}_{x}}\\
    &\lesssim\sup\limits_{t\in[T,\infty)}\Vert v(t)\Vert_{L^{2}_{x}}+ \Vert V(-t_{2})y(t_{2})-V(-t_{1})y(t_{1})\Vert_{L^{2}_{x}}\\
    &\lesssim C(T)+\Vert V(-t_{2})y(t_{2})-V(-t_{1})y(t_{1})\Vert_{L^{2}_{x}}\to 0\quad \text{as $T\to\infty$,}
\end{align*}
since $y$ scatters in $L^{2}_{x}(\mathbb{R})$ (which is known by \cite[Theorems 1.1 and 1.2]{Dodson_gKdV}) and $v(t)\to 0$ in $L^{2}_{x}(\mathbb{R})$ as $t\to\infty$ due to \eqref{frstnormv}. This shows that $u_{*}(t)$, $\omega$ almost surely, converges to the same strong limit of $y(t)$ in $L^{2}_{x}(\mathbb{R})$ as $t\to\infty$, i.e.,
\begin{equation*}
    \lim_{t\to\infty}\Vert V(-t)u_{*}(t)-y_{+}\Vert_{L^{2}_{x}}=0,\quad \omega\text{-a.s.},
\end{equation*}
where $y_{+}$ is the strong limit of $V(-t)y(t)$ in $L^{2}_{x}(\mathbb{R})$ as $t\to\infty$.

\subsection{Proof of Theorem \ref{thm:scat_in_H1}}\label{section:scat_in_H1}
In this subsection, we always assume $k>4$, and will follow the notations and notion of decomposition of solutions \eqref{Duhamel_of_stochastic_solution} introduced in Section \ref{tss} in this range. We initially review the related result in \cite{Farah_H1_scattering} ($H^{1}_{x}$ scattering for the solution $y(t)$ of \eqref{eq:deterministic_gKdV_article}):
\begin{proposition}[Proposition 3.3 in \cite{Farah_H1_scattering}]\label{propfarah}
    Let $k>4$ be an even integer. If $y(T)\in H^{1}_{x}(\mathbb{R})$ and
    \begin{equation*}
        y(t)=V(t-T)y(T)+\int_{T}^{t}V(t-t')(y^{k+1})_{x}(t')dt'
    \end{equation*}
    is a global solution satisfying $\sup\limits_{t\in[T,\infty)}\Vert y(t)\Vert_{H^{1}_{x}}<\infty$ and $\Vert y\Vert_{L^{5k/4}_{x}L^{5k/2}_{t}([T,\infty))}<\infty$, then there exists $\phi^{+}\in H^{1}_{x}$ such that
    \begin{equation*}
        \lim_{t\to\infty}\Vert V(-t)y(t)-\phi^{+}\Vert_{H^{1}_{x}}=0.
    \end{equation*}
\end{proposition}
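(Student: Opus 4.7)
The plan is to verify a Cauchy criterion for $\{V(-t)y(t)\}_{t\geq T}$ in $H^1_x(\R)$: by Duhamel, for $T\leq t_1<t_2<\infty$,
\begin{equation*}
V(-t_2)y(t_2) - V(-t_1)y(t_1) = \int_{t_1}^{t_2} V(-t')\, \partial_x(y^{k+1})(t')\, dt',
\end{equation*}
so once the $H^1_x$ norm of the right side is shown to vanish as $t_1\to\infty$, completeness of $H^1_x$ yields the scattering state $\phi^+ := \lim_{t\to\infty} V(-t)y(t)$.

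To bound the right side, I would apply the dual of a Kato-type estimate from Lemma \ref{kato_estimates} (the adjoint underlying \eqref{est:Kato_2}) for a suitable Kato triple $(p,q,\alpha)$, combined with the fact that $V(\cdot)$ commutes with $\langle D_x\rangle$, producing
\begin{equation*}
\norm{V(-t_2)y(t_2) - V(-t_1)y(t_1)}_{H^1_x} \lesssim \norm{D_x^{-\alpha}\langle D_x\rangle \partial_x (y^{k+1})}_{L^{p'}_x L^{q'}_t([t_1,t_2])}.
\end{equation*}
The same argument applied at the $L^2_x$ level (i.e.\ without the $\langle D_x\rangle$) handles the mass piece of the $H^1_x$ norm.

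Next I would expand $\partial_x(y^{k+1}) = (k+1)\, y^k\, \partial_x y$, apply the fractional Leibniz rule (Lemma \ref{lemma:fractional_leibniz}) to distribute the nonlocal derivatives across the product, and then invoke Hölder in $x$ and in $t$ to arrive at an estimate of the schematic form
\begin{equation*}
\norm{D_x^{-\alpha}\langle D_x\rangle(y^k\,\partial_x y)}_{L^{p'}_x L^{q'}_t}\lesssim \norm{y}_{L^{5k/4}_x L^{5k/2}_t}^{k}\,\,\norm{\langle D_x\rangle \partial_x y}_{L^{a}_x L^{b}_t},
\end{equation*}
with $(a,b)$ dictated by the Hölder relations. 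The Kato triple $(p,q,\alpha)$ is then chosen so that $(a,b)$ is itself Kato-admissible; \eqref{est:Kato_3} applied to the Duhamel representation of $y$, together with the standing hypotheses $\sup_{t\geq T}\norm{y(t)}_{H^1_x}<\infty$ and $\norm{y}_{L^{5k/4}_x L^{5k/2}_t([T,\infty))}<\infty$, controls the second factor uniformly in $t_1$, while the first factor tends to $0$ as $t_1\to\infty$ by dominated convergence applied to the finite scattering norm on $[T,\infty)$.

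The principal obstacle is the exponent calibration: for $k>4$ the pair $(5k/4, 5k/2)$ is not itself Kato-admissible, so the Hölder split must be coupled either with a Sobolev embedding or with a Stein-type complex interpolation (the same device used in Section \ref{A priori estimates} to produce \eqref{alpha4u}, with interpolation parameter $\theta=\tfrac{1}{2}+\tfrac{2}{k}$) in order to place the residual factor in a Kato-admissible mixed norm that is in turn controlled by the uniform energy bound. Once this bookkeeping is completed, the displayed bound shows the right side is $o(1)$ as $t_1\to\infty$ uniformly in $t_2>t_1$, giving the required Cauchy property in $H^1_x$ and hence the scattering state $\phi^+$.
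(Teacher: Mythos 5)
The paper does not actually prove this statement; it is invoked verbatim as Proposition~3.3 of the cited Farah reference, so there is no ``paper's own proof'' to compare against line by line. That said, your proposed route---Cauchy criterion in $H^1_x$ via Duhamel, dual Kato estimates \eqref{est:Kato_2}, fractional Leibniz (Lemma~\ref{lemma:fractional_leibniz}), and Sobolev embedding or Stein interpolation to compensate for the non-Kato-admissibility of the pair $(5k/4,5k/2)$ when $k>4$---is the standard strategy, and it parallels precisely the estimates this paper does carry out in Section~\ref{section:scat_in_H1} for the perturbation $v=u_*-y$ (see the chain \eqref{lambda1} through \eqref{v}). As a reconstruction of what the cited proof must do, your skeleton is correct.

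One step is glossed over and needs to be made explicit. To conclude that the ``second factor'' (a Kato-admissible norm of $\langle D_x\rangle y$, e.g.\ $\norm{\langle\partial_x\rangle y}_{L^5_x L^{10}_t([T,\infty))}$) is bounded uniformly in $t_1$, you cannot simply apply \eqref{est:Kato_3} to the Duhamel formula for $y$ on all of $[T,\infty)$: doing so reproduces that very Kato norm of $y$ on the right-hand side, multiplied by $\norm{y}^{k}_{L^{5k/4}_x L^{5k/2}_t([T,\infty))}$, which the hypotheses give as finite but not small, so the estimate does not close. The standard fix is a persistence-of-regularity bootstrap: partition $[T,\infty)$ into finitely many subintervals on each of which $\norm{y}_{L^{5k/4}_x L^{5k/2}_t}$ is small, close the Kato-norm estimate interval by interval, and sum. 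The paper sidesteps exactly this issue in Section~\ref{section:scat_in_H1} by making the prefactor $C(T,y,z_*)$ in \eqref{bounds_for_crucial_norms_H_1} small (taking $T$ large), which allows a direct appeal to Lemma~\ref{lemma:uniform_bounded_by_its_higher_powers}; in a self-contained proof of Proposition~\ref{propfarah}, however, you will not have any such smallness, so the interval-splitting argument is essential and should appear in your write-up rather than being subsumed under ``controls the second factor uniformly.''
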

The above proposition illustrates that the $L^{\frac{5k}{4}}_{x}L^{\frac{5k}{2}}_{t}$ norm measures the scattering size of solutions to the mass supercritical equations \eqref{eq:stochastic_gKdV}. Indeed, proving the global in time boundedness of this norm is essential for energy scattering. In what follows, utilizing the results of Propositions \ref{mainprop} and \ref{propfarah}, we will demonstrate this boundedness for $v$ in $\omega$ almost sure sense.
For this purpose, we consider the Duhamel representation
\begin{equation}\label{duhamel_of_v_mass_supercrit}
    v(t)=\int_{T}^{t}V(t-t')\left[(( v + y + z_{*})^{k+1})_{x} - (y^{k+1})_{x}\right](t')dt'\quad t\in[T,\infty)
\end{equation}
and define
\begin{equation*}
    \begin{aligned}
        \lambda_{1}(v)=&\norm{\langle\partial_{x}\rangle v}_{L^{5}_{x}L^{10}_{t}([T,\infty))},\\
        \lambda_{2}(v)=&\norm{v}_{L^{\frac{5k}{4}}_{x}L^{\frac{5k}{2}}_{t}([T,\infty))}.
    \end{aligned}
\end{equation*}
Set $\Lambda(v)=\max\{\lambda_{1}(v),\lambda_{2}(v)\}$. 
Then, using \eqref{duhamel_of_v_mass_supercrit}, Proposition \ref{Kato_est_most_general} and Hölder's inequality, we get
\begin{equation}\label{lambda1}
\begin{aligned}
    \lambda_{1}(v)=&\norm{\langle\partial_{x}\rangle\int_{T}^{t}V(t-t')\partial_{x}\left((v+y+z_{*})^{k+1}-y^{k+1}\right)(t')dt'}_{L^{5}_{x}L^{10}_{t}([T,\infty))}\\
    \lesssim&\norm{\langle\partial_{x}\rangle\left((v+y+z_{*})^{k+1}-y^{k+1}\right)}_{L^{1}_{x}L^{2}_{t}([T,\infty))}\\
    \lesssim&\norm{\langle\partial_{x}\rangle(v+z_{*})}_{L^{5}_{x}L^{10}_{t}([T,\infty))}\norm{v+y+z_{*}}_{L^{\frac{5k}{4}}_{x}L^{\frac{5k}{2}}_{t}([T,\infty))}^{k}\\
    &+\norm{\langle\partial_{x}\rangle y}_{L^{5}_{x}L^{10}_{t}([T,\infty))}\left(\norm{v+y+z_{*}}_{L^{\frac{5k}{4}}_{x}L^{\frac{5k}{2}}_{t}([T,\infty))}^{k}+\norm{y}_{L^{\frac{5k}{4}}_{x}L^{\frac{5k}{2}}_{t}([T,\infty))}^{k}\right)\\
    \lesssim&(\lambda_{1}(v)+\lambda_{1}(z_{*}))\left(\lambda_{2}^{k}(v)+\lambda_{2}^{k}(y)+\lambda_{2}^{k}(z_{*})\right)+\lambda_{1}(y)\left(\lambda_{2}^{k}(v)+\lambda_{2}^{k}(y)+\lambda_{2}^{k}(z_{*})\right)\\
    \lesssim& C(T,y,z_{*})\left(1+\Lambda(v)+\Lambda^{k}(v)+\Lambda^{k+1}(v)\right)\\
    \lesssim& C(T,y,z_{*})(1+\Lambda^{k+1}(v))
\end{aligned}
\end{equation}
where $C(T,y,z_{*})>0$ is given by $C(\Lambda(y)+\Lambda(z_{*}))^{k+1}$ for some absolute constant $C>0$,
which tends to $0$ as $T\to\infty$ in virtue of Propositions \ref{mainprop} and \ref{propfarah}.
As for $\lambda_{2}(v)$, applying Sobolev embedding in $x$ and $t$, Proposition \ref{Kato_est_most_general} and the Hölder's inequality gives
\begin{equation}\label{lambda2}
\begin{aligned}
    \lambda_{2}(v)=&\norm{\partial_{x}\int_{T}^{t}V(t-t')\left((v+y+z_{*})^{k+1}-y^{k+1}\right)(t')dt'}_{L^{\frac{5k}{4}}_{x}L^{\frac{5k}{2}}_{t}([T,\infty))}\\
    \lesssim&\norm{D_{x}^{\frac{1}{2}-\frac{2}{k}}\left((v+y+z_{*})^{k+1}-y^{k+1}\right)}_{L^{1}_{x}L^{2}_{t}([T,\infty))}\\
    \lesssim&\norm{\langle\partial_{x}\rangle\left((v+y+z_{*})^{k+1}-y^{k+1}\right)}_{L^{1}_{x}L^{2}_{t}([T,\infty))}\\
    \leq& C(T,y,z_{*})(1+\Lambda^{k+1}(v)).
\end{aligned}
\end{equation}
We now want to estimate $\norm{\partial_{x}^{2}v}_{C^{0}_{x}L^{2}_{t}([T,\infty))}$ and $\norm{v}_{C^{0}_{t}H^{1}_{x}([T,\infty))}$. For the former, Kato estimate \eqref{est:Kato_3} leads to
\begin{equation}\label{der2v}
\begin{aligned}
   \norm{\partial_{x}^{2}v}_{C^{0}_{x}L^{2}_{t}([T,\infty))}=&\norm{\partial_{x}^{2}\int_{T}^{t}V(t-t')\partial_{x}\left((v+y+z_{*})^{k+1}-y^{k+1}\right)(t')dt'}_{C^{0}_{x}L^{2}_{t}([T,\infty))}\\
   \lesssim&\norm{\partial_{x}\left((v+y+z_{*})^{k+1}-y^{k+1}\right)}_{L^{1}_{x}L^{2}_{t}([T,\infty))}\\
   \leq&C(T,y,z_{*})(1+\Lambda^{k+1}(v)).
\end{aligned}
\end{equation}
For the latter, Kato estimate \eqref{est:Kato_2} gives
\begin{equation}\label{v}
\begin{aligned}
    \norm{v}_{C^{0}_{t}H^{1}_{x}([T,\infty))}=&\sup\limits_{T\leq t<\infty}\norm{\langle\partial_{x}\rangle\int_{T}^{t}V(t-t')\partial_{x}\left((v+y+z_{*})^{k+1}-y^{k+1}\right)(t')dt'}_{L^{2}_{x}}\\
    =&\sup\limits_{T\leq t<\infty}\norm{\langle\partial_{x}\rangle\int_{T}^{t}V(-t')\partial_{x}\left((v+y+z_{*})^{k+1}-y^{k+1}\right)(t')dt'}_{L^{2}_{x}}\\
    \lesssim&\norm{\langle\partial_{x}\rangle\left((v+y+z_{*})^{k+1}-y^{k+1}\right)}_{L^{1}_{x}L^{2}_{t}([T,\infty))}\\
    \leq& C(T,y,z_{*})(1+\Lambda^{k+1}(v)).
\end{aligned}
\end{equation}
Therefore, from \eqref{lambda1}--\eqref{v}, we have
\begin{equation}\label{bounds_for_crucial_norms_H_1}
    \Lambda(v)+\norm{\partial_{x}^{2}v}_{C^{0}_{x}L^{2}_{t}([T,\infty))}+\norm{v}_{C^{0}_{t}H^{1}_{x}([T,\infty))}\leq C(T,y,z_{*})(1+\Lambda^{k+1}(v)).
\end{equation}
Now applying Lemma \ref{lemma:uniform_bounded_by_its_higher_powers} to \eqref{bounds_for_crucial_norms_H_1}, we arrive at
\begin{equation}\label{decay_of_norms_of_v}
    \Lambda(v)+\norm{\partial_{x}^{2}v}_{C^{0}_{x}L^{2}_{t}([T,\infty))}+\norm{v}_{C^{0}_{t}H^{1}_{x}([T,\infty))}\lesssim C(T,y,z_{*})\to 0\text{ as }T\to\infty.
\end{equation}
Finally, it is left to prove that $\{V(-t)u_{*}(t):\,t\in[T,\infty)\}$ is Cauchy in $H^{1}_{x}(\mathbb{R})$. Take $T<t_{1}<t_{2}<\infty$. Then, $\omega$-a.s., we have
\begin{align*}
    \norm{V(-t_{2})u_{*}(t_{2})-V(-t_{1})u_{*}(t_{1})}_{H^{1}_{x}}\leq&\norm{V(-t_{2})(u_{*}(t_{2})-y(t_{2}))}_{H^{1}_{x}}+\norm{V(-t_{2})y(t_{2})-V(-t_{1})y(t_{1})}_{H^{1}_{x}}\\
    &+\norm{V(-t_{1})(u_{*}(t_{1})-y(t_{1}))}_{H^{1}_{x}}\\
    \lesssim&\sup\limits_{T\leq t<\infty}\norm{v(t)}_{H^{1}_{x}}+\norm{V(-t_{2})y(t_{2})-V(-t_{1})y(t_{1})}_{H^{1}_{x}}\to 0\text{ as }T\to \infty
\end{align*}
since $y$ scatters in $H^{1}_{x}(\mathbb{R})$ by Proposition \ref{propfarah} and $v(t)\to 0$ in $H^{1}_{x}(\mathbb{R})$ as $t\to\infty$ due to \eqref{decay_of_norms_of_v}. Therefore, $u_{*}(t)$ converges to the same strong limit of $y(t)$ in $H^{1}_{x}(\mathbb{R})$ as $t\to\infty$, i.e.,
\begin{equation*}
    \lim_{t\to\infty}\norm{V(-t)u_{*}(t)-y_{+}}_{H^{1}_{x}}=0,\quad \omega\text{-a.s.},
\end{equation*}
where $y_{+}$ is the strong limit of $V(-t)y(t)$ in $H^{1}_{x}(\mathbb{R})$ as $t\to\infty$.

\nocite{*}
\bibliographystyle{abbrv}
\bibliography{Reference.bib}

\end{document}